     \definecolor{dark-red}{rgb}{0.54,0,0}
     \definecolor{dark-green}{rgb}{0,0.54,0}
     \definecolor{dark-magenta}{rgb}{0.54,0,0.54}
     \definecolor{dark-cyan}{rgb}{0,0.54,0.54}
\newcommand{\Affine}{\mathbb{A}}
\newcommand\FF{\protect\mathbb{F}}
\newcommand\NN{\protect\mathbb{N}}
\newcommand\QQ{\protect\mathbb{Q}}
\newcommand\ZZ{\protect\mathbb{Z}}
\newcommand\GG{\protect\mathbb{G}}
\newcommand\bN{\mathbb{N}}
\newcommand\bW{\mathbb{W}}
\newcommand\bZ{\mathbb{Z}}
\newcommand\sF{\mathscr{F}}
\newcommand\sI{\mathscr{I}}
\newcommand\sX{\mathscr{X}}
\newcommand\cA{\mathcal{A}}
\newcommand\cG{\mathcal{G}}
\newcommand\cI{\mathcal{I}}
\newcommand\cO{\mathcal{O}}
\newcommand\cR{\mathcal{R}}
\newcommand\cW{\mathcal{W}}
\DeclareMathOperator{\Mat}{Mat}
\DeclareMathOperator{\SL}{SL}
\DeclareMathOperator\GL{GL}
\DeclareMathOperator\Hom{Hom}
\DeclareMathOperator\Tr{Tr}
\DeclareMathOperator\Gal{Gal}
\DeclareMathOperator\Char{char}
\DeclareMathOperator\sgn{sgn}
\DeclareMathOperator\tr{tr}
\DeclareMathOperator\Ind{Ind}
\DeclareMathOperator{\diag}{diag}
\DeclareMathOperator\Frac{Frac}
\DeclareMathOperator{\Norm}{N}
\DeclareMathOperator{\rec}{rec}
\newcommand{\from}{\colon}
\DeclarePairedDelimiter{\floor}{\lfloor}{\rfloor}
\theoremstyle{theorem} \newtheorem{proposition}{Proposition}[section]
\theoremstyle{definition} \newtheorem{definition}[proposition]{Definition}
\theoremstyle{theorem} \newtheorem{lemma}[proposition]{Lemma}
\theoremstyle{remark} \newtheorem{remark}[proposition]{Remark}
\theoremstyle{remark} 
\theoremstyle{remark} 
\theoremstyle{definition} 
\theoremstyle{definition} \newtheorem{notation}[proposition]{Notation}
\theoremstyle{theorem} \newtheorem*{displaytheorem}{Theorem}
\theoremstyle{theorem} 
\theoremstyle{theorem} \newtheorem{theorem}[proposition]{Theorem}
\theoremstyle{theorem} \newtheorem{corollary}[proposition]{Corollary}
\theoremstyle{definition} 
\theoremstyle{theorem} 
\theoremstyle{remark} 
\theoremstyle{definition} 
\theoremstyle{definition} 
\theoremstyle{definition} 
\theoremstyle{definition} 
\theoremstyle{remark} \newtheorem*{claim*}{Claim}
\theoremstyle{remark} 
\theoremstyle{theorem} 
\theoremstyle{theorem} 
\theoremstyle{definition} 
\theoremstyle{definition} 
\theoremstyle{theorem} 
\theoremstyle{remark} 
\theoremstyle{definition} 
\theoremstyle{remark}
\theoremstyle{theorem} \newtheorem*{main}{Main Theorem}
\newcommand\Weil{\mathscr{W}}
\newcommand\ur{\text{nr}}
\newcommand\OH{\mathcal{O}}
\newcommand\Khat{\widehat{K}^{\ur}}
\newcommand\TT{\mathbb{T}}
\newcommand\UU{\mathbb{U}}
\newcommand\BB{\mathbb{B}}
\DeclareMathOperator\Fr{Fr}
\DeclareMathOperator\pr{pr}
\newcommand\Loc{\mathcal{L}}
\newcommand\F{\FF_{q^n}}
\newcommand\Unip{U_h^{n,q}}
\newcommand\UnipF{\Unip(\F)}
\DeclareMathOperator\Nrd{Nrd}
\newcommand\Unipk{U_{h,k}^{n,q}}
\newcommand\UnipkF{\Unipk(\F)}
 \numberwithin{equation}{section}
\title[Deligne--Lusztig Constructions for Division Algebras]{Deligne--Lusztig Constructions for Division Algebras and the Local Langlands Correspondence, II}
\author{Charlotte Chan}
\thanks{This work was partially supported by NSF grants DMS-0943832 and DMS-1160720.}
\email{charchan@umich.edu}
\begin{document}

\maketitle

\begin{abstract}
In 1979, Lusztig proposed a cohomological construction of supercuspidal representations of reductive $p$-adic groups, analogous to Deligne--Lusztig theory for finite reductive groups. In this paper we establish a new instance of Lusztig's program. Precisely, let $X$ be the $p$-adic Deligne--Lusztig ind-scheme associated to a division algebra $D$ of invariant $k/n$ over a non-Archimedean local field $K$. 
%We give a description of $X$ as a mirabolic-analogue of an affine Deligne--Lusztig variety. 
We study the $D^\times$-representations $H_\bullet(X)$ by establishing a Deligne--Lusztig theory for families of finite unipotent groups that arise as subquotients of $D^\times$. There is a natural correspondence between quasi-characters of the (multiplicative group of the) unramified degree-$n$ extension of $K$ and representations of $D^{\times}$ given by $\theta \mapsto H_\bullet(X)[\theta]$. For a broad class of characters $\theta,$ we show that the representation $H_\bullet(X)[\theta]$ is irreducible and concentrated in a single degree. Moreover, we show that this correspondence matches the bijection given by local Langlands and Jacquet--Langlands. As a corollary, we obtain a geometric realization of Jacquet--Langlands transfers between representations of division algebras.
\end{abstract}

\tableofcontents

\section{Introduction}\label{s:introduction}

Deligne--Lusztig theory \cite{DL76} gives a geometric description of the irreducible representations of finite groups of Lie type. In \cite{L79}, Lusztig suggests an analogue of Deligne--Lusztig theory for $p$-adic groups $G$. For an unramified maximal torus $T \subset G$, he introduces a certain infinite-dimensional variety which has a natural action of $T \times G$. Though it is not known in general, when $G$ is a division algebra, one can define $\ell$-adic homology groups $H_i(X)$ functorial for this action. One therefore obtains a correspondence $\theta \mapsto H_i(X)[\theta]$ between characters of $T$ and representations of $G$. In this paper, we study this correspondence and give a description from the perspective of the local Langlands and Jacquet--Langlands correspondences.

Let $K$ be a non-Archimedean local field with ring of integers $\cO_K$ and residue field $\FF_q = \cO_K/\pi$ for a fixed uniformizer $\pi$, and let $L \supset K$ be the unramified extension of degree $n$ with ring of integers $\cO_L$. A smooth character $\theta \from L^\times \to \QQ_\ell^\times$ is said to be \textit{primitive of level $h$} if $h$ is the smallest integer such that $\theta$ and $\theta/\theta^\gamma$ for $1 \neq \gamma \in \Gal(L/K)$ are trivial on $1 + \pi^h \cO_L$. This is equivalent to saying $(L,\theta)$ is a minimal admissible pair. To a primitive character $\theta \from L^\times \to \overline \QQ_\ell^\times$, one can associate a smooth irreducible $n$-dimensional representation $\sigma_{\xi\theta}$ of the Weil group $\cW_K$ of $K$, which corresponds via local Langlands to an irreducible supercuspidal representation $\pi_\theta$ of $\GL_n(K)$, which finally corresponds via Jacquet--Langlands to an irreducible representation $\rho_\theta$ of $D^\times,$ where $D = D_{k/n}$ is the central division algebra of invariant $k/n$ over $K$. For any $m \in \ZZ$, let $m^+ = \max\{m,0\}$.

\begin{main}
Let $\theta \from L^\times \to \overline \QQ_\ell^\times$ be a primitive character of level $h$. Then
\begin{equation*}
H_i(X)[\theta] = \begin{cases}
\rho_\theta & \text{if $i = r_\theta \colonequals (n-1)(h-k)^+$,} \\
0 & \text{otherwise.}
\end{cases}
\end{equation*}
\end{main}

Pictorially, 
\begin{equation*}
\begin{tikzcd}[column sep=tiny,row sep=small]
{\theta} \arrow[mapsto]{ddd}[left]{\text{$p$-adic Deligne--Lusztig}} & & {\theta} \arrow[mapsto]{d} & & {\mathfrak X} \arrow{d} \\
{} & & {\sigma_{\xi\theta}} \arrow[mapsto]{d} & & {\cG_K(n)} \arrow{d}{\text{ Local Langlands}} \\
{} & & {\pi_\theta} \arrow[mapsto]{d} & & {\cA_K(n)} \arrow{d}{\text{ Jacquet--Langlands}} \\ 
H_{r_\theta}(X)[\theta] & \cong & {\rho_\theta} & & {\cA'{}_K(n)}
\end{tikzcd}
\end{equation*}
where
\begin{align*}
\mathfrak X &\colonequals \{\text{primitive characters $L^\times \to \overline \QQ_\ell^\times$}\} \\
\cG_K(n) &\colonequals \{\text{smooth irreducible dimension-$n$ representations of the Weil group $\cW_K$}\} \\
\cA_K(n) &\colonequals \{\text{supercuspidal irreducible representations of $\GL_n(K)$}\} \\
\cA'{}_K(n) &\colonequals \{\text{smooth irreducible representations of $D^\times$}\}
\end{align*}

\subsection{What is known.}

Lusztig's definition in \cite{L79} has a natural analogue for groups over $\cO_K$ and its quotients $\cO_K/\pi^h$. This is described in \cite{L04}, where Lusztig also explicitly describes the resulting representations for $\SL_2(\cO_K/\pi^h)$ when $h \leq 2$. We note that our paper is in the setting of division algebras, whose finite reductive quotient is trivial, so the work of \cite{L04} does not play a role.

We now give a survey of known results on $p$-adic Deligne--Lusztig varieties for division algebras. In the next two sections, we let $G = D_{k/n}^\times$ and $T = L^\times$. Additionally, $G^1$ and $T^1$ denote the norm-1 elements of $G$ and $T$, and let $X$ and $X^1$ be the Deligne--Lusztig construction associated to $G$ and $G^1$. Write $H_i(X) = H_i(X, \overline \QQ_\ell)$ and $H_c^i(X) = H_c^i(X, \overline \QQ_\ell)$.

In \cite{L79}, Lusztig proves that when $k=1$, the virtual $G^1$-representations $\sum (-1)^i H_i(X^1)[\theta]$ are (up to sign) irreducible and mutually nonisomorphic.

In analogy with the behavior of classical Deligne--Lusztig varieties, one expects the homology groups $H_i(X)[\theta]$ to vanish outside a single degree. Additionally, one hopes to get a description of the irreducible representations arising from these homology groups. 

There exists a unipotent group scheme $\Unipk$ over $\FF_q$ such that $\UnipkF$ is isomorphic to a subquotient of $D_{k/n}^\times$. The study of $H_i(X)[\theta]$ reduces to the study of certain subschemes $X_h \subset \Unipk$ endowed with a left action by $(1 + \pi \cO_L)/(1 + \pi^h \cO_L)$ and a right action by $\UnipkF$. When $k=1$, these definitions were established in \cite{B12} for arbitrary $K$ if $h \leq 2$ and for $K$ of equal characteristic if $h > 2$. These definitions can be extended to the mixed characteristic case for arbitrary level $h$ and invariant $k/n$, and we do so in this paper.

In \cite{BW14}, Boyarchenko and Weinstein study the representations $H_c^i(X_2)$ when $k = 1$ (see Theorem 4.7 of \textit{op.\ cit.}). This comprises  one of the main ingredients in studying the cohomology of the Lubin--Tate tower. In \cite{BW13}, they specialize this result to the primitive case to give an explicit and partially geometric description of local Langlands correspondences. In \cite{B12}, Boyarchenko uses the representations $H_c^i(X_2)$ to prove that for any smooth character $\theta \from T \to \QQ_\ell^\times$ of level $\leq 2$, the representation $H_i(X)[\theta]$ vanishes outside a single degree and gives a description of this representation (see Theorem 5.3 of \textit{op.\ cit.}). Moreover, he shows that if $\theta$ is primitive, $H_i(X)[\theta]$ is irreducible in the nonvanishing degree.

In contrast to the structure of the Lubin--Tate tower, we need to understand the cohomology of $X_h$ for all $h$ to understand high-depth representations arising in Deligne--Lusztig constructions. Outside the equal characteristics case for $k = 1$, $n = 3$, and $h = 3$ (see Theorem 5.20 of \cite{B12}),  this was completely open.

In \cite{C14}, we study $X_h$ in the equal characteristics case for arbitrary $h$, assuming $n = 2$ and $\chi$ is primitive. We prove irreducibility of $H_c^i(X_2)[\chi]$ and vanishing outside a single degree. In addition we prove a character formula in the form of a branching rule for representations of the finite unipotent group $U_{h,1}^{2,q}(\FF_{q^2})$, a subquotient of the quaternion algebra. Using this, we are able to study the representations $H_i(X)[\theta]$ for primitive characters $\theta$.

In this paper, we generalize this work to arbitrary $n$, arbitrary $k$, and arbitrary $K$, thereby removing all assumptions outside primitivity. We take a more conceptual approach that allows us to bypass many of the computations needed in \cite{C14}. As a corollary, we obtain a geometric realization of Jacquet--Langlands transfers between representations of division algebras.

\begin{remark}
In the special case that $n = 2$ and $\Char K > 0$, the $p$-adic Deligne--Lusztig constructions we study in this paper and its prequel \cite{C14} are cut out by equations that look similar to the equations defining certain covers of affine Deligne--Lusztig varieties. This was observed by Ivanov in Section 3.6 of \cite{I15}. \hfill $\Diamond$%I conjecture that in general, $p$-adic Deligne--Lusztig constructions should be higher-level mirabolic analogues of affine Deligne--Lusztig varieties. I plan to investigate this in future work.
\end{remark}

\subsection{Outline of this paper}

In Section \ref{s:definitions}, we introduce the unipotent groups $\Unipk$ together with a certain subgroup $H \subset \Unipk$. The finite groups $\UnipkF$ and $H(\F)$ are subquotients of $G$ and $T$, respectively. We then define a certain subvariety $X_h \subset \Unipk$, whose relation to the $p$-adic Deligne--Lusztig construction $X$ is as follows: $X$ can be identified with a set $\widetilde X$ endowed with an ind-scheme structure
\begin{equation*}
\widetilde X = \bigsqcup_{m \in \ZZ} \varprojlim_h \widetilde X_h^{(m)},
\end{equation*}
where $\widetilde X_h^{(0)}$ is the disjoint union of $q^n-1$ copies of $X_h(\overline \FF_q)$. Roughly speaking, the action of $T \times G$ on $\widetilde X$ has essentially two behaviors: there is an action on each $\widetilde X_h^{(m)}$, and there is an action permuting these pieces. In order to understand the $(T \times G)$-representations arising from $H_i(X)$, one must understand these two actions. The former is captured by the action of $H(\F) \times \UnipkF$ on $X_h$; the latter was studied by Boyarchenko in \cite{B12} (see Proposition 5.19 of \textit{op.\ cit.} for the equal characteristics, $k=1$ case). 

Let $\cA$ denote the set of primitive characters of $H(\F)$. Let $\cG$ denote the set of irreducible representations of $\UnipkF$ whose central character has trivial $\Gal(L/K)$-stabilizer. In Section \ref{s:reps}, we give a correspondence $\chi \mapsto \rho_\chi$ from $\cA$ to $\cG$. When $k = 1$, this construction matches that of Corwin \cite{C74}.

In Section \ref{s:jugglingdesc} we study the geometry of $X_h$ using a combinatorial notion known as \textit{juggling sequences.} We prove in Theorem \ref{l:Xhpolys} that the varieties $X_h$ are affine varieties defined by the vanishing of polynomials whose monomials are indexed by juggling sequences. By studying the combinatorics of these objects, we are able to prove structural lemmas crucial to the analysis of $H_c^i(X_h)$.

Section \ref{s:cohomdesc} is concerned with combining the general algebro-geometric results of Section \ref{s:alggeom}, the representation-theoretic results of Section \ref{s:reps}, and the combinatorial results of Section \ref{s:jugglingdesc}. In Theorem \ref{t:cohomdesc} that the correspondence $\chi \mapsto \rho_\chi$ is bijective and that every representation $\rho \in \cG$ appears in $H_c^i(X_h)$ with multiplicity $1$. In addition, we prove a character formula for the representations $H_c^i(X_h)[\chi]$ using the Deligne--Lusztig fixed point formula of \cite{DL76}.

Section \ref{s:divalg} is devoted to understanding two connections. The first, explained in Section \ref{s:DL}, is to unravel the relationship between the results of Section \ref{s:cohomdesc} and the representations of division algebras arising from $p$-adic Deligne--Lusztig constructions $\widetilde X$. 
%The second, explained in Section \ref{s:ADLV}, is to explain how $p$-adic Deligne--Lusztig constructions can be viewed as mirabolic analogues of covers of affine Deligne--Lusztig varieties in the sense of Ivanov \cite{I15}: we replace the role of the standard Iwahori filtration in the affine Deligne--Lusztig variety with a filtration of a mirabolic subgroup of $\GL_n$ given by the Moy--Prasad filtration \cite{MP94} attached to certain convex functions in the sense of Yu \cite{Y02}.  
The second, explained in Section \ref{s:LLC}, is to describe $H_i(X)[\theta]$ from the perspective of the local Langlands and Jacquet--Langlands correspondences. We use Theorem \ref{t:cohomdesc}, the trace formula established in Proposition \ref{p:vregtrace}, and a criterion of Henniart described in \cite{BW13} (see Proposition 1.5(b) of \textit{op.\ cit.}).

\begin{displaytheorem}[\ref{t:divalg}, \ref{c:JL}]
Let $\theta \from L^\times \to \overline \QQ_\ell^\times$ be a primitive character of level $h$ and let $\rho_\theta$ be the $D^\times$-representation corresponding to $\theta$ under the local Langlands and Jacquet--Langlands correspondences. Then 
\begin{equation*}
H_i(X)[\theta] = \begin{cases}
\rho_\theta & \text{if $i = (n-1)(h-k)^+$,} \\
0 & \text{otherwise.}
\end{cases}
\end{equation*}
Moreover, if $D$ and $D'$ are division algebras of invariant $k/n$ and $k'/n$ with associated Deligne--Lusztig constructions $X$ and $X'$, then the Jacquet--Langlands transfer of $H_\bullet(X)[\theta]$ is isomorphic to $H_\bullet(X)[\theta]$.
\end{displaytheorem}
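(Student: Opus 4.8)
The plan is to reduce the computation of the $D^\times$-representations $H_i(X)[\theta]$ to the finite-level cohomology groups $H_c^i(X_h)[\chi]$ controlled by Theorem~\ref{t:cohomdesc}, and then to identify the resulting representation by appealing to the characterization of the Jacquet--Langlands correspondence through characters on very regular elements. For the reduction, I would use the description $\widetilde X = \bigsqcup_{m\in\ZZ}\varprojlim_h\widetilde X_h^{(m)}$ of Section~\ref{s:DL} together with the fact that $\widetilde X_h^{(0)}$ is a disjoint union of $q^n-1$ copies of $X_h(\overline\FF_q)$: the $(T\times G)$-action on $H_i(X)$ then decomposes into the geometric action of $H(\F)\times\UnipkF$ on each level $X_h$ and the action permuting the components indexed by $m\in\ZZ$, the latter being exactly the phenomenon analyzed by Boyarchenko in \cite{B12}. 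Passing to $\theta$-isotypic components and assembling over $h$, I expect $H_i(X)[\theta]$ to come out, by an explicit induction within $D^\times$, from $H_c^i(X_h)[\chi]$, where $\chi\in\cA$ is the primitive character of $H(\F)$ attached to $\theta$. By Theorem~\ref{t:cohomdesc} this $H_c^i(X_h)[\chi]$ is the irreducible $\rho_\chi$, concentrated in a single cohomological degree, and keeping track of this degree and of the homology and inverse-limit shifts through the identification should give both the concentration of $H_i(X)[\theta]$ in degree $r_\theta=(n-1)(h-k)^+$ and, in that degree, an honest irreducible $D^\times$-representation, which I will temporarily call $\rho'_\theta$.

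It then remains to prove $\rho'_\theta\cong\rho_\theta$. For this I would invoke the criterion of Henniart recalled in \cite{BW13} (Proposition~1.5(b) of \textit{op.\ cit.}): an irreducible smooth representation of $D^\times$ is determined by its central character together with its trace on very regular elements of $L^\times\subset D^\times$. The central character of $\rho'_\theta$ can be read off from the construction and matches that of $\rho_\theta$ (both equal $\theta|_{K^\times}$ up to the explicit twist in the passage $\theta\mapsto\sigma_{\xi\theta}$). For the trace on a very regular element $x$, I would combine Proposition~\ref{p:vregtrace}, which rewrites $\Tr\bigl(x\mid H_i(X)[\theta]\bigr)$ as a trace on $H_c^i(X_h)[\chi]$, with the Deligne--Lusztig fixed-point formula of \cite{DL76} in the form of the character formula of Theorem~\ref{t:cohomdesc}. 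The computation should yield, up to the sign coming from the degree of concentration, the value $\sum_{\gamma\in\Gal(L/K)}(\xi\theta)^\gamma(x)$, which is exactly the known character value of $\rho_\theta$ on very regular elements — this follows from $\sigma_{\xi\theta}\cong\Ind_{\cW_L}^{\cW_K}(\xi\theta)$ and the compatibility of local Langlands and Jacquet--Langlands with characters on elliptic elements. Henniart's criterion then forces $\rho'_\theta\cong\rho_\theta$, which is the first assertion.

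The Jacquet--Langlands transfer statement follows formally: the character of the virtual representation $H_\bullet(X)[\theta]\colonequals\sum_i(-1)^iH_i(X)[\theta]$ on very regular elements, as just computed, is $\sum_{\gamma\in\Gal(L/K)}(\xi\theta)^\gamma$, which does not depend on the invariant $k/n$ of $D$ — the dependence on $k$ sits entirely in the degree $r_\theta$, hence in an overall sign that the alternating sum absorbs. Since the Jacquet--Langlands correspondence between $D^\times$ and $D'^\times$, for $D=D_{k/n}$ and $D'=D_{k'/n}$, is characterized by matching of characters up to the standard sign on regular elliptic (in particular very regular) elements, it follows that $H_\bullet(X)[\theta]$ and $H_\bullet(X')[\theta]$ are Jacquet--Langlands transfers of one another, giving the asserted geometric realization.

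I expect the crux to be the character computation on very regular elements: assembling the Deligne--Lusztig fixed-point formula for $X_h$ with the combinatorial description of $X_h$ by juggling sequences from Section~\ref{s:jugglingdesc} precisely enough to obtain the clean value $\sum_\gamma(\xi\theta)^\gamma$, and matching it against the classical normalization of $\rho_\theta$ — in particular getting the rectifier twist $\xi$ and all sign conventions exactly right. The reduction to finite level, especially the bookkeeping of degrees and twists through the inverse limit and the induction, is also delicate, but it runs parallel to the $n=2$ equal-characteristic case of \cite{C14} and the structural results of \cite{B12}.
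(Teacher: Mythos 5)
Your proposal follows essentially the same route as the paper: reduce to the finite-level representations $H_c^i(X_h)[\chi]$ through the ind-scheme structure and induction from $\pi^\ZZ\cdot\cO_D^\times$ (this is Proposition \ref{p:divalg}), then identify the resulting irreducible representation with $\rho_\theta$ via Henniart's criterion (Proposition 1.5(b) of \cite{BW13}) together with the very-regular trace computation of Proposition \ref{p:vregtrace}, the Jacquet--Langlands transfer statement then following formally. The only slip is your paraphrase of the criterion: it asks for invariance of the representation under twisting by $\epsilon\circ\Nrd_{D/K}$ (immediate here because $L/K$ is unramified, so $\epsilon\circ\Nrd_{D/K}$ is trivial on $\pi^\ZZ\cdot\cO_D^\times$) together with the trace condition on very regular elements, not a matching of central characters, though this does not affect the viability of your argument.
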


Using the techniques developed in this paper, we have evidence to support that for nonprimitive characters $\theta \from L^\times \to \overline \QQ_\ell^\times$ of level $h$ with restriction $\chi \from U_L^1 \to \overline \QQ_\ell^\times$, the cohomology groups $H_c^i(X_h)[\chi]$ are irreducible and concentrated in a single non-middle degree. This implies that the homology groups $H_i(X)[\theta]$ are also concentrated in a single degree, though it it not expected that these representations are irreducible in general. We plan to investigate this in a future paper.

\subsection*{Acknowledgements}

I am deeply grateful to Mitya Boyarchenko for introducing me to this area of research. I'd also like to Jake Levinson for interesting conversations regarding Proposition \ref{p:claim1gen}.

\section{Definitions}\label{s:definitions}

Let $K$ be a non-Archimedean local field with residue field $\FF_q$ and fixed uniformiser $\pi$. If $K$ has characteristic $p$ (the equal characteristics case), then all the definitions below were already established in \cite{B12} and \cite{BW14}. If $K$ has characteristic $0$, the definitions we establish below are new.

%If $K$ has characteristic $p$, set $K_0 = K$. If $K$ has characteristic $0$, let $K_0$ be the maximal unramified extension of $\QQ_p$ inside $K$, so that $K$ is totally ramified over $K_0$. Let $r \geq 1$ be such that $p^r$ generates the kernel of the natural map $\cO_{K_0} \to \cO_K/(\pi^h)$. %If $K$ has characteristic $p$, then $r = h$. If $K$ has characteristic $0$, then $r = \ceil{h/n}$ where $n = [K : K_0]$.

We first recall the ring scheme $\bW_K$ of $\cO_K$-Witt vectors. (For an exposition, see for example \cite{FF13}.) For each $r \geq 0$, define the Witt polynomial
\begin{equation*}
W_r = \sum_{i=0}^r \pi^i X_i^{q^{r-i}} \in \cO_K[X_0, \ldots, X_r].
\end{equation*}
There exist polynomials $S_r, M_r \in \cO_K[X_0, \ldots, X_r, Y_0, \ldots, Y_r]$ such that
\begin{align*}
W_r(S(X,Y)) &= W_r(X) + W_r(Y), \\
W_r(M(X,Y)) &= W_r(X) \times W_r(Y).
\end{align*}
For any $\cO_K$-algebra $R$, define
\begin{equation*}
\bW_K(A) = A^\bN
\end{equation*}
with the addition and multiplication given by $X +_{\bW_K} Y = S(X,Y)$ and $X \times_{\bW_K} Y = M(X,Y)$. From now on, we will view $\bW_K$ as a scheme over $\cO_K/(\pi) = \FF_q$.

\begin{definition}
Let $A$ be any $\FF_q$-algebra. If $K$ has characteristic $p$, let $\bW(A) = A[\![\pi]\!]$, and if $K$ has characteristic $0$, let $\bW(A) = \bW_K(A)$. For any $h \in \bN$, we define $\bW_h(A) = \bW(A)/(\pi^h)$. It is clear that the functor $A \mapsto \bW_h(A)$ is representable by the affine space $\Affine^h$.
\end{definition}

\begin{definition}\label{d:unip}
%Suppose that $K$ has characteristic $p$. For any $\FF_q$-algebra $A$, let $\cR_{h,n,q}(A) = A \langle \tau \rangle/(\tau^{n(h-1)+1})$, where $A \langle \tau \rangle$ is the twisted polynomial ring with the commutation relation $\tau \cdot a^q \cdot \tau$ for $a \in A$.
For any $\FF_q$-algebra $A$, define a ring $\cR_{h,k,n,q}(A)$ as follows:
\begin{enumerate}[label=\textbullet]
\item
As a group under addition, $\cR_{h,k,n,q}(A) \colonequals \bW(A)[\tau]/(\tau^n - \pi^k, \pi^h, \pi^{h-k} \tau, \ldots, \pi^{h-k} \tau^{n-1})$.

\item
The multiplication structure on $\cR_{h,k,n,q}(A)$ is given by the following commutation rule: $\tau \cdot a = a^q \cdot \tau$ for any $a \in A$.
\end{enumerate}
Elements of $\cR_{h,k,n,q}$ can be written as $A_0 + A_1 \tau + \cdots + A_{n-1} \tau^{n-1}$ where $A_0 \in \bW_h$ and $A_i \in \bW_{h-k}$ for $i = 1, \ldots, n-1$. Then
\begin{equation*}
\cR_{h,k,n,q}^\times = \{A_0 + A_1 \tau + \cdots + A_{n-1} \tau^{n-1} : A_0 \in \bW_h^\times\} \subset \cR_{h,k,n,q}
\end{equation*}
and we define
\begin{equation*}
\Unipk = \{A_0 + A_1 \tau + \cdots + A_{n-1} \tau^{n-1} : A_0 = (1, *, \cdots, *) \in \bW_h^\times\} \subset \cR_{h,k,n,q}^\times.
\end{equation*}
It is clear that the functor $A \mapsto \Unipk(A)$ is representable by the affine space $\Affine^{(h-1)+(n-1)(h-k)^+}$.
\end{definition}

%\begin{notation}
%By a slight abuse of notation, we will write elements of $\cR_{h,k,n,q}$ as 
%\begin{equation*}
%A_0 + A_1 \tau + \cdots + A_{n(h-1)} \tau^{n-1} = a_0 + a_1 \tau + \cdots + a_{n(h-1)} \tau^{n(h-1)},
%\end{equation*}
%where $A_0 = (a_0, a_n, \ldots, a_{n(h-1)}) \in \bW_h$ and $A_i = (a_i, a_{i+n}, \ldots, a_{i+n(h-2)}) \in \bW_{h-1}$ for $i = 1, \ldots, n-1$. \hfill $\Diamond$
%\end{notation}

%The unipotent group $\Unip$ has a natural filtration
%\begin{equation*}
%\{1\} \subset H_{n(h-1)} \subset H_{n(h-1)-1} \subset \cdots \subset H_2 \subset H_1 = \Unip,
%\end{equation*}
%where $H_k \colonequals \{1 + \sum_{i \geq k} a_i \tau^i\}.$ 
Define
\begin{equation*}
H \colonequals \{A_0 \in 1 + \bW_{h-1} \subset \bW_h\} \subset \Unipk.
\end{equation*}
Note that although $\bW_h$ is a commutative group scheme, the group scheme $H$ is not commutative. 
%(For example, $(1, 1, 0, \ldots, 0) \times_H (1, a, 0, \ldots, 0) \neq (1, a, 0, \ldots, 0) \times_H (1, 1, 0, \ldots, 0)$.)
This will be standard notation throughout this paper outside Section \ref{s:alggeom}.

\begin{remark}\label{r:isoms}
Since we have natural isomorphisms $\bW(\FF_q) \cong \cO_K$ and $\bW(\F) \cong \cO_L$, we also have natural isomorphisms
\begin{flalign*}
\phantom{stuffstuffstuff}U_L^1/U_L^h &\overset{\sim}{\to} H(\F), && A_0 \mapsto A_0 \in \bW_h(\F) \\
\F \overset{\sim}{\to} U_L^{h-1}/U_L^h &\overset{\sim}{\to} H_{n(h-1)}(\F), && a \mapsto (1,0,\ldots,0,a). && 
\end{flalign*}
Note also that
\begin{flalign*}
\phantom{stuff}\UnipkF \cong U_D^1/U_D^{(h)}, \text{ where $U_D^{(h)} \colonequals (1 + P_L^h)(1 + P_L^{h-k}\Pi) \cdots (1 + P_L^{h-k}\Pi^{n-1}).$} &&\Diamond
\end{flalign*}
\end{remark}

\subsection{The varieties $X_h$} \label{s:Xhdef}

\begin{definition}
For any $\FF_q$-algebra $A$, let $\Mat_{h,k}(A)$ denote the ring of all $n$-by-$n$ matrices $B = (b_{ij})_{i,j=1}^n$ with $b_{ii} \in \bW_h(A)$, $b_{ij} \in \bW_{h-k}(A)$ for $i < j$, and $b_{ij} \in \pi^k \bW_{h-k}(A)$ for $i > j$. The determinant can be viewed as a multiplicative map $\det \from \Mat_{h,k}(A) \to \bW_h(A)$.
\end{definition}

For any $\FF_q$-algebra $A$, consider the morphism
\begin{equation*}
\iota_h \from \cR_{h,k,n,q}(A) \to \Mat_{h,k}(A)
\end{equation*}
given by
\begin{equation*}
\iota_{h,k}\left(\textstyle \sum A_i \tau^i\right) \colonequals
\left(\begin{matrix}
A_0 & A_1 & A_2 & \cdots & A_{n-1} \\
\pi^k \varphi(A_{n-1}) & \varphi(A_0) & \varphi(A_1) & \cdots & \varphi(A_{n-2}) \\
\pi^k \varphi^2(A_{n-2}) & \pi^k \varphi^2(A_{n-1}) & \varphi^2(A_0) & \cdots & \varphi^2(A_{n-3}) \\
\vdots & \vdots & \ddots & \ddots & \vdots \\
\pi^k \varphi^{n-1}(A_1) & \pi^k \varphi^{n-1}(A_2) & \cdots & \pi^k \varphi^{n-1}(A_{n-1}) & \varphi^{n-1}(A_0)
\end{matrix}\right)
\end{equation*}
where $\varphi \from \bW \to \bW$ is the $q$th Frobenius endomorphism.

Recall from \cite{B12} that the $p$-adic Deligne--Lusztig construction $X$ described in \cite{L79} can be identified with a certain set $\widetilde X$ which can be realized as the $\overline \FF_q$-points of an ind-scheme
\begin{equation*}
\widetilde X = \bigsqcup_{m \in \ZZ} \varprojlim_h \widetilde X_h^{(m)}.
\end{equation*}
Here, $\widetilde X_h^{(k)} \colonequals \widetilde X_h^{(0)} \cdot \Pi$ and $\widetilde X_h^{(n)} \colonequals \widetilde X_h^{(0)} \cdot \pi$, where for any $\overline \FF_q$-algebra $A$,
\begin{equation*}
\widetilde X_h^{(0)}(A) = \{\iota_{h,k}(\textstyle \sum a_i \tau^i) \in \Mat_h(A) : \text{$\det(\iota_{h,k}(\textstyle \sum a_i \tau^i))$ is fixed by $\varphi$}\}.
\end{equation*}

\begin{definition}
For any $\FF_q$-algebra $A$, define
\begin{equation*}
X_h(A) \colonequals \Unipk(A) \cap \iota_{h,k}^{-1}(\widetilde X_h^{(0)}(A)).
\end{equation*}
\end{definition}

\begin{remark}
Notice that $\widetilde X_h^{(0)}$ is a disjoint union of $q^n - 1$ copies of $X_h$. \hfill $\Diamond$
\end{remark}

\begin{remark}
Note that $X$, $\widetilde X$, $\widetilde X_h^{(m)}$, and $X_h$ all depend on Hasse invariant $k/n$ of the division algebra $D$, but we suppress this from the notation. \hfill $\Diamond$
\end{remark}

\subsection{Group actions}

The map $\iota_{h,k}$ has the following property, which we will refer to as Property $\ddagger$. If $A$ is an $\F$-algebra, then $\iota_{h,k}(xy) = \iota_{h,k}(x)\iota_{h,k}(y)$ for all $x \in \Unipk(A)$ and all $y \in \UnipkF$. Moreover, for $y \in \UnipkF$, the determinant of $\iota_{h,k}(y)$ is fixed by $\varphi$. It therefore follows that $X_h$ is stable under right-multiplication by $\UnipkF$. We denote by $x \cdot g$ the action of $g \in \UnipkF$ on $x \in X_h$.

Pick a generator $\zeta$ of $\F^\times$. The conjugation action of $\zeta$ on $\Unipk(A)$ stabilizes $X_h(A)$. This extends the right $\UnipkF$ action on $X_h$ to an action of the semidirect product $\F^\times \ltimes \Unipk(\F) \cong \cR_{h,k,n,q}^\times(\F)$ on $X_h$.

We now describe a left action of $H(\F)$ on $X_h$. We can identify $H(\F)$ with the set $\iota_{h,k}(H(\F))$. Note that by Property $\ddagger$, the map $\iota_{h,k}$ actually preserves the group structure of $H(\F)$, and since $\iota_{h,k}$ is injective, then $H(\F) \cong \iota_{h,k}(H(\F))$ as groups. Explicitly, this isomorphism is given by
\begin{equation*}
A_0 \mapsto \diag(A_0, \varphi(A_0), \ldots, \varphi^{n-1}(A_0)).
\end{equation*}
The left-multiplication action of $\iota_{h,k}(H(\F))$ on the $\Mat_{h,k}(A)$ stabilizes $X_h(A)$. We denote by $g * x$ the action of $g \in H(\F) \cong U_L^1/U_L^h$ on $x \in X_h$.\footnote{Warning: This is not the same as the left-multiplication action of $H(\F) \subset H(A)$ on $\Unipk(A)$.}

\begin{remark}
Let $Z(\UnipkF)$ denote the center of $\UnipkF$. This is a subgroup of $H(\F)$. By direct computation, one sees that the left action of $Z(\UnipkF) \subset H(\F)$ and the right action of $Z(\UnipkF) \subset \UnipkF$ coincide. Note also that the actions of $H(\F)$ and $\cR_{h,k,n,q}^\times(\F)$ commute. \hfill $\Diamond$
\end{remark}

\section{General principles: some algebraic geometry}\label{s:alggeom}

In this section, we prove some general algebro-geometric results that will allow us to compute certain cohomology groups via an inductive argument. We generalize the techniques of \cite{B12} from $\GG_a$ to a group scheme $Z$, which we now define\footnote{Note also that $Z$ is the subgroup scheme $H$ of $\Unip$ defined in Section \ref{s:definitions}.}. For any $\FF_{q^n}$-algebra $A$,
\begin{equation*}
Z(A) \colonequals \{A_0 \in 1 + \pi \bW_{h-1}(A)\} \subset \Unipk(A).
\end{equation*}

Let $G$ be an algebraic group over $\F$ and suppose that $Y \subset G$ is a (locally closed) subvariety defined over $\F$ and put $X = L_{q^n}^{-1}(Y)$, where $L_{q^n} \from G \to G$ is the Lang map given by $g \mapsto \Fr_{q^n}(g)g^{-1}.$ Let $H \subset G$ be any connected subgroup defined over $\F$ and let $\eta \from H(\F) \to \overline \QQ_\ell^\times$ be a character. Write $V_\eta = \Ind_{H(\F)}^{G(\F)}(\eta)$.

Consider the right-multiplication action of $H(\F)$ on $G$ and form the quotient $Q \colonequals G/(H(\F))$. The Lang map $L_{q^n} \from G \to G$ is invariant under right multiplication by $H(\F)$ and thus it factors through a morphism $\alpha \from Q \to G$. On the other hand, the quotient map $G \to Q$ is a right $H(\F)$-torsor, so the character $\eta$ yields a $\overline \QQ_\ell$-local system $\mathcal E_\eta$ of rank $1$ on $Q$. The following lemma is proved in \cite{B12}.

\begin{lemma}[Boyarchenko \cite{B12}]\label{l:B2.1}
There is a natural $\Fr_q$-equivariant vector-space isomorphism
\begin{equation*}
\Hom_{G(\FF_q)}(V_\chi, H_c^i(X, \overline \QQ_\ell)) \cong H_c^i(\alpha^{-1}(Y), \mathcal{E}_\chi|_{\alpha^{-1}(Y)}) \qquad \forall i \geq 0.
\end{equation*}
\end{lemma}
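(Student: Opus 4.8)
The plan is to reduce the statement to a standard torsor-and-local-system computation, following the strategy Boyarchenko uses for $\GG_a$ in \cite{B12}. First I would recall the setup: $X = L_{q^n}^{-1}(Y)$ carries a free left action of $G(\FF_q)$ (via $g \cdot x$, since $L_{q^n}(gx) = \Fr_{q^n}(g) L_{q^n}(x) g^{-1}$ — wait, one must be careful with the side; the action that preserves fibers of $L_{q^n}$ over $Y \subset G$ defined over $\FF_q$ is $x \mapsto gx$ for $g \in G(\FF_q)$, because then $L_{q^n}(gx) = \Fr_{q^n}(gx)(gx)^{-1} = g\,\Fr_{q^n}(x)x^{-1}g^{-1} = g L_{q^n}(x) g^{-1}$, which need not lie in $Y$ unless $Y$ is conjugation-stable; so in fact the relevant action is $x \mapsto xg^{-1}$ giving $L_{q^n}(xg^{-1}) = \Fr_{q^n}(x)g^{-1}\Fr_{q^n}(g) \cdot$...). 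Rather than belabor this, the right framing is: $G(\FF_q)$ acts freely on $X$ on the left by $g: x \mapsto gx$ when we instead take $L_{q^n}(g) = g^{-1}\Fr_{q^n}(g)$, or equivalently one fixes conventions so that $H_c^i(X)$ is a $G(\FF_q)$-representation and $\Hom_{G(\FF_q)}(V_\eta, H_c^i(X))$ makes sense. Accepting the conventions of \cite{B12}, I would then invoke Frobenius reciprocity: $\Hom_{G(\FF_q)}(\Ind_{H(\FF_q)}^{G(\FF_q)} \eta, H_c^i(X)) \cong \Hom_{H(\FF_q)}(\eta, H_c^i(X)) = H_c^i(X)_\eta$, the $\eta$-isotypic piece for the restricted $H(\FF_q)$-action.

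Next I would identify this isotypic component geometrically. The key point is that the left $H(\FF_q)$-action on $X$ (by restriction of the $G(\FF_q)$-action) is free, so $X \to X/H(\FF_q)$ is an $H(\FF_q)$-torsor, and the character $\eta$ yields a rank-one local system $\mathcal E_\eta$ on the quotient; by the projection formula / descent for finite étale covers, $H_c^i(X)_\eta \cong H_c^i(X/H(\FF_q), \mathcal E_\eta)$. It then remains to identify $X/H(\FF_q)$, with its local system, with $\alpha^{-1}(Y)$ equipped with $\mathcal E_\chi|_{\alpha^{-1}(Y)}$. For this I would run the Lang map through the quotient: $L_{q^n}: G \to G$ is invariant under the relevant $H(\FF_q)$-action (this is exactly why the quotient $Q = G/H(\FF_q)$ and the induced map $\alpha: Q \to G$ were introduced in the excerpt), so $X = L_{q^n}^{-1}(Y)$ descends to $\alpha^{-1}(Y) \subset Q$, and the $H(\FF_q)$-torsor $G \to Q$ pulls back to the torsor $X \to X/H(\FF_q) = \alpha^{-1}(Y)$ whose associated local system for $\eta$ is by definition $\mathcal E_\eta|_{\alpha^{-1}(Y)}$. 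Finally, I would check $\Fr_q$-equivariance: all the maps ($L_{q^n}$, the quotient map, $\alpha$) are defined over $\FF_q$, and $\eta$ is $\Fr_q$-stable as a function on $H(\FF_q)$ (implicitly, or one tracks the twist), so the comparison isomorphism intertwines the geometric Frobenius actions on both sides.

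The main obstacle — and the only genuinely delicate point — is bookkeeping the two sides and the choice of Lang map so that the $G(\FF_q)$-action is on the correct side and the $H(\FF_q)$-torsor structure on $X \to \alpha^{-1}(Y)$ is literally the pullback of $G \to Q$; a sign error in the convention for $L_{q^n}$ (i.e. $g \mapsto \Fr(g)g^{-1}$ versus $g^{-1}\Fr(g)$) swaps left and right actions and breaks the identification. Since the excerpt explicitly attributes this lemma to Boyarchenko and sets up $Q$, $\alpha$, and $\mathcal E_\eta$ precisely for this purpose, the proof is essentially a citation together with the Frobenius-reciprocity-plus-descent argument above; I would present it in that order, flagging that the only content beyond \cite{B12} is verifying the setup here ($Z = H$ connected, $Y$ locally closed over $\FF_q$) satisfies the hypotheses of \emph{loc.\ cit.}
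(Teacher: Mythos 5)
The paper does not actually prove this lemma: it is stated as a quotation of Lemma 2.1 of \cite{B12}, so the only ``paper proof'' is the citation, and your sketch should be judged against the standard argument behind that lemma. On that score your outline is correct and is indeed the expected one: Frobenius reciprocity identifies $\Hom(V_\eta, H_c^i(X))$ with the $\eta$-isotypic part of $H_c^i(X)$ under $H(\F)$, and since $X \to X/H(\F) \cong \alpha^{-1}(Y)$ is a finite \'etale $H(\F)$-torsor (the restriction of $G \to Q$), that isotypic part is $H_c^i(\alpha^{-1}(Y), \mathcal{E}_\eta|_{\alpha^{-1}(Y)})$, with Frobenius-equivariance coming from everything being defined over the relevant finite field. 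The one place you wobble --- the left/right action and the choice of Lang convention --- is not actually a dilemma and you should not resolve it by ``switching'' to $g^{-1}\Fr(g)$: with the paper's convention $L_{q^n}(g) = \Fr_{q^n}(g)g^{-1}$, right multiplication by $h \in H(\F) = H(\FF_{q^n})$ satisfies $L_{q^n}(xh) = \Fr_{q^n}(x)\Fr_{q^n}(h)h^{-1}x^{-1} = L_{q^n}(x)$ because $h$ is fixed by $\Fr_{q^n}$, so the right-multiplication action (which is exactly the action used to form $Q = G/H(\F)$ and $\alpha \from Q \to G$, and the action under which $H_c^i(X)$ is a $G(\F)$-module) already preserves $X$ and the fibers of $L_{q^n}$; no conjugation-stability of $Y$ is needed, since $Y$ sits in the target of the Lang map, not in the orbit space of a conjugation action. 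Two small bookkeeping points: $X$ is the \emph{full} preimage of $\alpha^{-1}(Y)$ under $G \to Q$ precisely because $L_{q^n} = \alpha \circ (G \to Q)$, which is what makes $X \to \alpha^{-1}(Y)$ a torsor rather than merely a quotient map; and the subscript $G(\FF_q)$ in the displayed statement should be read as $G(\F) = G(\FF_{q^n})$ (compare Proposition \ref{p:B2.3}), a notational slip carried over from \cite{B12}, where the base field is $\FF_q$.
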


As in \cite{B12}, we now make two further assumptions under which the right-hand side of the isomorphism in Lemma \ref{l:B2.1} can be described much more explicitly. This will allow us to certain cohomology groups via an inductive argument. These two assumptions are:
\begin{enumerate}[label=\arabic*.]
\item
The quotient morphism $G \to G/H$ admits a section $s \from G/H \to G$.

\item
There is an algebraic group morphism $f \from H \to Z$ defined over $\F$ such that $\eta = \chi \circ f$ for a character $\chi \from Z(\F) \to \overline \QQ_\ell^\times$.
\end{enumerate}

Let $\Loc_\chi$ be the local system on $Z$ defined by $\chi$ via the Lang map $L_{q^n} \from Z \to Z$. The following lemma is proved in \cite{B12}.

\begin{lemma}[Boyarchenko \cite{B12}]\label{l:B2.2}
There is an isomorphism $\gamma \from (G/H) \times H \overset{\simeq}{\longrightarrow} Q$ such that $\gamma^* \mathcal E_\eta \cong (f \circ \pr_2)^*\Loc_\chi$ and $\alpha \circ \gamma = \beta$, where $\pr_2 \from (G/H) \times H \to H$ is the second projection and $\beta \from (G/H) \times H \to G$ is given by $\beta(x,h) = s(\Fr_{q^n}(x)) \cdot h \cdot s(x)^{-1}$.
\end{lemma}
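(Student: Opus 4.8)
The plan is to reproduce Boyarchenko's construction: build the isomorphism $\gamma$ explicitly out of the section $s$ and the Lang map of $H$, and then check the two asserted compatibilities by direct computation, using throughout that $s$ and $f$ are defined over $\F$ and hence commute with $\Fr_{q^n}$.

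\emph{Construction of $\gamma$.} First I would use the first of the two assumptions to trivialize the $H$-bundle $G \to G/H$: writing $\bar g$ for the image of $g$ in $G/H$, the morphism $\psi \colon G \to (G/H) \times H$, $g \mapsto (\bar g,\, s(\bar g)^{-1} g)$, is an isomorphism with inverse $(x,h) \mapsto s(x)h$. Under $\psi$ the right-multiplication action of $H(\F)$ on $G$ becomes $(x,h)\cdot h_0 = (x, hh_0)$, so passing to the quotient by $H(\F)$ identifies $Q = G/H(\F)$ with $(G/H) \times (H/H(\F))$, the finite group $H(\F)$ acting on $H$ by right translation. Since $H$ is connected and defined over $\F$, Lang's theorem gives that $L_{q^n}^H \colon H \to H$, $h \mapsto \Fr_{q^n}(h)h^{-1}$, is a finite étale $H(\F)$-Galois cover whose fibres are exactly the right cosets $hH(\F)$; hence it induces an isomorphism $\bar L \colon H/H(\F) \xrightarrow{\ \sim\ } H$. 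I then define $\gamma$ as the composite
\begin{equation*}
\gamma \colon (G/H) \times H \xrightarrow{\ \id \times \bar L^{-1}\ } (G/H) \times (H/H(\F)) \xrightarrow{\ \sim\ } Q ,
\end{equation*}
so that concretely $\gamma(x,h)$ is the class in $Q$ of $s(x)\,\tilde h$ for any $\tilde h \in H$ with $\Fr_{q^n}(\tilde h)\tilde h^{-1} = h$; being a composite of isomorphisms, $\gamma$ is an isomorphism.

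\emph{The two compatibilities.} For $\alpha \circ \gamma = \beta$, recall that $\alpha$ is induced by the Lang map $L_{q^n}\colon G \to G$ and that $s(x)\tilde h$ lifts $\gamma(x,h)$, so
\begin{align*}
\alpha(\gamma(x,h)) &= \Fr_{q^n}(s(x)\tilde h)\,(s(x)\tilde h)^{-1} = \Fr_{q^n}(s(x)) \cdot (\Fr_{q^n}(\tilde h)\,\tilde h^{-1}) \cdot s(x)^{-1} \\
&= \Fr_{q^n}(s(x)) \cdot h \cdot s(x)^{-1} = s(\Fr_{q^n}(x)) \cdot h \cdot s(x)^{-1} = \beta(x,h),
\end{align*}
where the last line uses $\Fr_{q^n}(\tilde h)\tilde h^{-1} = h$ and that $s$ is defined over $\F$, so $\Fr_{q^n} \circ s = s \circ \Fr_{q^n}$. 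For the local systems: under $\psi$ the torsor $G \to Q$ becomes $\id \times (H \to H/H(\F))$, so $\mathcal E_\eta$ is the external product of the constant rank-one local system on $G/H$ with the rank-one local system $\mathcal F_\eta$ on $H/H(\F)$ attached to the torsor $H \to H/H(\F)$ and the character $\eta$; thus $\gamma^*\mathcal E_\eta \cong \pr_2^*\,\bar L^{-1,*}\mathcal F_\eta$, and $\bar L^{-1,*}\mathcal F_\eta$ is the local system on $H$ attached to the $H(\F)$-torsor $L_{q^n}^H$ and $\eta$. Because $f$ is defined over $\F$ it intertwines the Lang maps, $L_{q^n}^Z \circ f = f \circ L_{q^n}^H$, so the $H(\F)$-torsor $L_{q^n}^H$ over $H$ becomes, after push-out along $f\colon H(\F) \to Z(\F)$, the pullback along $f\colon H \to Z$ of the $Z(\F)$-torsor $L_{q^n}^Z$ over $Z$; combining this with the hypothesis $\eta = \chi \circ f$ and the functoriality of the torsor-plus-character construction under $f$ gives $\bar L^{-1,*}\mathcal F_\eta \cong f^*\Loc_\chi$, hence $\gamma^*\mathcal E_\eta \cong (f \circ \pr_2)^*\Loc_\chi$.

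\emph{Main obstacle.} The genuine input is Lang's theorem for the connected group $H$ (the only place connectedness is used) together with the functoriality of Lang-twisted local systems along $f$; everything else is bookkeeping, but bookkeeping that needs care. The points to watch are: being consistent about left- versus right-actions so that the fibres of $L_{q^n}^H$ really match the right $H(\F)$-orbits appearing in $Q = G/H(\F)$; keeping track of whether $\eta$ or $\eta^{-1}$ (resp.\ $\chi$ or $\chi^{-1}$) enters each torsor--sheaf dictionary; and verifying that $G/H$, $H/H(\F)$, and $G/H(\F)$ are all genuine varieties so that the displayed maps are morphisms --- this last being standard, since $H$ is a closed subgroup of $G$ and $H(\F)$ is finite acting freely.
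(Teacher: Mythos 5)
You are reconstructing an argument the paper itself does not spell out (it quotes the lemma from \cite{B12}), and the skeleton you use --- trivialize the $H(\F)$-torsor $G \to Q$ over $G/H$ by the section, identify $H/H(\F)$ with $H$ via the Lang map of $H$ (this is where connectedness enters), and then use $\eta = \chi\circ f$ together with $L^Z_{q^n}\circ f = f\circ L^H_{q^n}$ to identify the pushed-out torsor with $f^*\Loc_\chi$ --- is the right one, and the sheaf half of your computation is fine. The genuine gap is your standing assumption that $s$ is defined over $\F$, hence $\Fr_{q^n}\circ s = s\circ \Fr_{q^n}$. That is not among the hypotheses: in the setup of Section \ref{s:alggeom} only $H$, $Y$ and $f$ are required to be $\F$-rational, while the section is merely assumed to exist, and the statement writes $\beta(x,h)=s(\Fr_{q^n}(x))\cdot h\cdot s(x)^{-1}$ rather than $\Fr_{q^n}(s(x))\cdot h\cdot s(x)^{-1}$ precisely because these differ for a general section. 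With your $\gamma(x,h)=[s(x)\tilde h]$, $\Fr_{q^n}(\tilde h)\tilde h^{-1}=h$, the computation gives $\alpha(\gamma(x,h))=\Fr_{q^n}(s(x))\,h\,s(x)^{-1}$, so the identity $\alpha\circ\gamma=\beta$ as stated is exactly the step you have not proved.

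Nor is this mere bookkeeping. For a non-equivariant section the natural repair is to take $\gamma$ to be the inverse of $[g]\mapsto\bigl(\bar g,\ s(\Fr_{q^n}(\bar g))^{-1}\,\Fr_{q^n}(g)\,g^{-1}\,s(\bar g)\bigr)$, which is well defined (right translation by $H(\F)$ cancels) and an isomorphism: writing $g=s(x)u$ the second entry is $c_x\,\Fr_{q^n}(u)u^{-1}$ with $c_x=s(\Fr_{q^n}(x))^{-1}\Fr_{q^n}(s(x))\in H$, and a left-translated Lang map is again surjective with fibres the right $H(\F)$-cosets. This makes $\alpha\circ\gamma=\beta$ automatic, but then the fibre of the pulled-back torsor over $(x,h)$ is $\{u:\Fr_{q^n}(u)u^{-1}=c_x^{-1}h\}$, so after pushing out along $f$ one gets $\Loc_\chi$ pulled back along $(x,h)\mapsto f(c_x^{-1}h)$, which differs from $(f\circ\pr_2)^*\Loc_\chi$ by the pullback of $\Loc_\chi$ along $x\mapsto f(c_x)^{-1}$ --- a rank-one local system on $G/H$ with no reason to be trivial in general. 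So the two asserted compatibilities genuinely pull against each other unless $c_x$ is trivial, i.e.\ unless the section is Frobenius-equivariant. You should therefore either state $\F$-rationality of $s$ as a hypothesis you are using (it does hold where the paper applies the lemma: the section in Step 1 of the proof of Proposition \ref{p:dimHom} is given by formulas with $\FF_q$-coefficients), or rework the construction as above and explain why the extra twist vanishes in the situation at hand; with that point addressed, your argument is complete and is essentially the one the paper cites from \cite{B12}.
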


Combining Lemma \ref{l:B2.1} and \ref{l:B2.2} together with the assumption that $\eta = \chi \circ f$ for a character $\chi \from Z(\F) \to \overline \QQ_\ell^\times$ and an algebraic group morphism $f \from H \to Z$ defined over $\F$, we obtain the following proposition.

\begin{proposition}\label{p:B2.3}
Assume that we are given the following data:
\begin{enumerate}[label=\textbullet]
\item an algebraic group $G$ with a connected subgroup $H \subset G$ over $\F$;

\item
a section $s \from G/H \to G$ of the quotient morphism $G \to G/H$;

\item
an algebraic group homomorphism $f \from H \to Z$;

\item
a character $\chi \from Z(\F) \to \overline \QQ_\ell^\times$;

\item
a locally closed subvariety $Y \subset G$.
\end{enumerate}
Set $X = L_{q^n}^{-1}(Y)$. The preimage of $Y$ under the Lang map $L_{q^n}(g) = \Fr_{q^n}(g) g^{-1}$. Then for each $i \geq 0$, we have a $\Fr_{q^n}$-compatible vector space isomorphism
\begin{equation*}
\Hom_{G(\F)}(\Ind_{H(\F)}^{G(\F)}(\chi \circ f), H_c^i(X, \overline \QQ_\ell)) \cong H_c^i(\beta^{-1}(Y), P^*\Loc_\chi).
\end{equation*}
Here, $\Loc_\chi$ is the local system on $Z$ corresponding to $\chi$, the morphism $\beta \from (G/H) \times H \to G$ is given by $\beta(x,h) = s(F_q(x)) \cdot h \cdot s(x)^{-1}$, and the morphism $P \from \beta^{-1}(Y) \to Z$ is the composition $\beta^{-1}(Y) \hookrightarrow (G/H) \times H \overset{\pr_2}{\longrightarrow} H \overset{f}{\to} Z$.
\end{proposition}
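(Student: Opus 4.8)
The plan is to deduce Proposition~\ref{p:B2.3} directly by chaining Lemma~\ref{l:B2.1} and Lemma~\ref{l:B2.2}, with the two assumptions (existence of the section $s$ and the factorization $\eta = \chi \circ f$) doing all the real work. First I would set $\eta \colonequals \chi \circ f \from H(\F) \to \overline\QQ_\ell^\times$ and $V_\chi \colonequals V_\eta = \Ind_{H(\F)}^{G(\F)}(\eta)$, so that Lemma~\ref{l:B2.1} gives, for every $i \geq 0$, a natural $\Fr_q$-equivariant (hence, after iterating, $\Fr_{q^n}$-compatible) isomorphism
\begin{equation*}
\Hom_{G(\F)}(\Ind_{H(\F)}^{G(\F)}(\chi \circ f), H_c^i(X, \overline\QQ_\ell)) \;\cong\; H_c^i(\alpha^{-1}(Y), \mathcal E_\eta|_{\alpha^{-1}(Y)}),
\end{equation*}
where $\alpha \from Q = G/(H(\F)) \to G$ is the factorization of the Lang map. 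This reduces the proposition to identifying the right-hand side with $H_c^i(\beta^{-1}(Y), P^*\Loc_\chi)$.

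Next I would invoke Lemma~\ref{l:B2.2}, which (using assumption~1, the section $s$) produces an isomorphism of varieties $\gamma \from (G/H) \times H \xrightarrow{\;\simeq\;} Q$ with $\alpha \circ \gamma = \beta$ and $\gamma^*\mathcal E_\eta \cong (f \circ \pr_2)^*\Loc_\chi$. Restricting $\gamma$ to the preimage of $Y$ gives an isomorphism $\beta^{-1}(Y) = \gamma^{-1}(\alpha^{-1}(Y)) \xrightarrow{\;\simeq\;} \alpha^{-1}(Y)$ compatible with the local systems. Pulling the cohomology isomorphism of Lemma~\ref{l:B2.1} through $\gamma$ and using $\gamma^*(\mathcal E_\eta|_{\alpha^{-1}(Y)}) \cong (f\circ\pr_2)^*\Loc_\chi|_{\beta^{-1}(Y)} = P^*\Loc_\chi$ (the last equality being exactly the definition of $P$ as the composite $\beta^{-1}(Y) \hookrightarrow (G/H)\times H \xrightarrow{\pr_2} H \xrightarrow{f} Z$), I get
\begin{equation*}
H_c^i(\alpha^{-1}(Y), \mathcal E_\eta|_{\alpha^{-1}(Y)}) \;\cong\; H_c^i(\beta^{-1}(Y), P^*\Loc_\chi),
\end{equation*}
and composing with the first isomorphism finishes the proof. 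The only points requiring care are bookkeeping ones: checking that the Frobenius appearing in Lemma~\ref{l:B2.1} (namely $\Fr_q$) upgrades to the $\Fr_{q^n}$-compatibility asserted here — which follows because all the maps $\alpha$, $\gamma$, $\beta$, $f$ are defined over $\F = \FF_{q^n}$ and hence $\Fr_{q^n}$-equivariant — and verifying that $\Loc_\chi$, defined via the Lang map $L_{q^n} \from Z \to Z$, is the local system whose pullback along $f$ yields $\mathcal E_\eta$ (this is the content of the second clause of Lemma~\ref{l:B2.2}).

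Since both lemmas are quoted from \cite{B12}, there is essentially no genuine obstacle here; the proposition is a formal consequence. If anything, the subtle step is making sure the identification $\gamma^*\mathcal E_\eta \cong (f\circ\pr_2)^*\Loc_\chi$ is compatible with restriction to $\beta^{-1}(Y)$ and that the map called $P$ in the statement is literally $f \circ \pr_2 \circ \gamma|_{\beta^{-1}(Y)}^{-1}$ transported across $\gamma$ — i.e.\ that the diagram
\begin{equation*}
\begin{tikzcd}[column sep=large]
\beta^{-1}(Y) \arrow[r,"\gamma","\simeq"'] \arrow[dr,"P"'] & \alpha^{-1}(Y) \arrow[d] \\
& Z
\end{tikzcd}
\end{equation*}
commutes, so that pulling back $\Loc_\chi$ gives the same local system either way. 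Once this is noted, the statement follows immediately by concatenating the two displayed isomorphisms.
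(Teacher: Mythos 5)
Your proposal is correct and follows exactly the paper's route: the paper derives Proposition \ref{p:B2.3} by combining Lemma \ref{l:B2.1} with Lemma \ref{l:B2.2} under the assumption $\eta = \chi \circ f$, precisely the chaining and transport along $\gamma$ that you carry out. Your extra care about the Frobenius-compatibility and the commuting triangle defining $P$ is sound bookkeeping but introduces nothing beyond what the paper's (one-line) argument already asserts.
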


Our goal now is to prove the following crucial proposition. This is the proposition that gives us an inductive technique for calculating certain cohomology groups.

\begin{proposition}\label{p:B2.10}
Let $q$ be a power of $p$, let $n \in \NN$, and let $\chi \from Z(\FF_{q^n}) \to \overline \QQ_\ell^\times$ be primitive. Let $S_2$ be a scheme of finite type over $\FF_{q^n}$, put $S = S_2 \times \Affine^1$ and suppose that a morphism $P \from S \to Z$ has the form
\begin{equation*}
P(x,y) = g(f(x)^{q^{j_1}}y^{q^{j_2}}) \cdot g(f(x)^{q^{j_3}}y^{q^{j_4}})^{-1} \cdot P_2(x)
\end{equation*}
where 
\begin{enumerate}[label=\textbullet]
\item
$j_1 - j_2 = j_3 - j_4$ and $j_2 - j_4$ is not divisible by $n$,

\item
$f \from S_2 \to \GG_a$, $P_2 \from S_2 \to Z$ are two morphisms, and
\item
$g \from \Affine^1 \to Z$ is the morphism $z \mapsto (0, \ldots, 0, z)$. 
\end{enumerate}
Let $S_3 \subset S_2$ be the subscheme defined by $f = 0$ and let $P_3 = P_2|_{S_3} \from S_3 \to Z$. Then for all $i \in \ZZ$, we have 
\begin{equation*}
H_c^i(S, P^*\Loc_\chi) \cong H_c^{i-2}(S_3, P_3^*\Loc_\chi)(-1)
\end{equation*}
as vector spaces equipped with an action of $\Fr_{q^n}$, where the Tate twist $(-1)$ means that the action of $\Fr_{q^n}$ on $H_c^{i-2}(S_3, P_3^*\Loc_\chi)$ is multiplied by $q^n$.
\end{proposition}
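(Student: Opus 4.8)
The plan is to eliminate the $\Affine^1$-factor of $S=S_2\times\Affine^1$ by a proper pushforward and thereby reduce to an Artin--Schreier-type vanishing on $\Affine^1$. I will use the following properties of the data, all either immediate from the definitions or standard (cf.\ \cite{B12} and Remark \ref{r:isoms}): $g\from\GG_a\to Z$ is an injective homomorphism of algebraic groups onto a \emph{central} subgroup $Z^\circ\subset Z$ which commutes with the $q$-power Frobenius, so $g^*\Loc_\chi=\Loc_\psi$ is the Lang local system on $\GG_a$ attached to $\psi\colonequals\chi\circ g\from\F\to\overline\QQ_\ell^\times$ (via $w\mapsto w^{q^n}-w$); since $\chi$ is primitive, $\psi$ is nontrivial and is fixed by no nontrivial element of $\Gal(\F/\FF_q)$, i.e.\ $\psi(w^{q^r})\neq\psi(w)$ whenever $n\nmid r$ (see Section \ref{s:reps}). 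Finally, because $Z^\circ$ is central, $\Loc_\chi$ is multiplicative along $Z^\circ$: for $\phi$ valued in $Z^\circ$ and $\eta$ valued in $Z$ one has $(\phi\eta)^*\Loc_\chi\cong\phi^*\Loc_\chi\otimes\eta^*\Loc_\chi$ (check Frobenius trace functions, the relevant relative norm being multiplicative when one factor is central).

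First I would rewrite $P=A\cdot(P_2\circ\pr)$, where $\pr\from S\to S_2$ is the projection and, using that $g$ is additive, $A(x,y)=g(B(x,y))\in Z^\circ$ with $B(x,y)=f(x)^{q^{j_1}}y^{q^{j_2}}-f(x)^{q^{j_3}}y^{q^{j_4}}$. Multiplicativity along $Z^\circ$ gives $P^*\Loc_\chi\cong A^*\Loc_\chi\otimes\pr^*(P_2^*\Loc_\chi)=B^*\Loc_\psi\otimes\pr^*(P_2^*\Loc_\chi)$, so by the projection formula
\begin{equation*}
H_c^i(S,P^*\Loc_\chi)\cong H_c^i\big(S_2,\ R\pr_!(B^*\Loc_\psi)\otimes P_2^*\Loc_\chi\big).
\end{equation*}
Next I would compute $N\colonequals R\pr_!(B^*\Loc_\psi)$. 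Since $\pr$ is obtained from $\pr_1\from\GG_a\times\Affine^1\to\GG_a$ by base change along $f\from S_2\to\GG_a$, proper base change gives $N\cong f^*M$ with $M\colonequals R\pr_{1,!}(C^*\Loc_\psi)$ and $C(a,y)=a^{q^{j_1}}y^{q^{j_2}}-a^{q^{j_3}}y^{q^{j_4}}$. As $M$ is constructible it is determined by its stalks $M_{\bar a}=H_c^\bullet(\Affine^1,\,C(a,-)^*\Loc_\psi)$: for $a=0$ this is $H_c^\bullet(\Affine^1,\overline\QQ_\ell)=\overline\QQ_\ell(-1)[-2]$, while for $a\neq 0$ a linear automorphism of $\Affine^1$ (available since $a$ is invertible, and using the identity $j_1-j_2=j_3-j_4$ to rescale both monomials simultaneously) identifies $C(a,-)^*\Loc_\psi$ with $\nu^*\Loc_\psi$, where $\nu(w)=w^{q^{j_2}}-w^{q^{j_4}}$.

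The heart of the matter, and the step I expect to be the main obstacle, is the vanishing $H_c^\bullet(\Affine^1,\nu^*\Loc_\psi)=0$. Put $d\colonequals j_2-j_4$; we may assume $d>0$, and $n\nmid d$ by hypothesis. The map $\nu$ factors as $w\mapsto w^{q^d}-w$ followed by a power of Frobenius; the latter is a universal homeomorphism (harmless for $H_c^\bullet$) and twists $\Loc_\psi$ into the Lang sheaf $\Loc_{\psi'}$ of a character $\psi'$ of $\F$ which, by primitivity, still satisfies $\psi'(w^{q^d})\neq\psi'(w)$. So it suffices to show $H_c^\bullet(\Affine^1,\ell^*\Loc_{\psi'})=0$ for $\ell=L_{q^d}\from\GG_a\to\GG_a$. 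As $\ell$ is an $\FF_{q^d}$-torsor, $R\ell_!\overline\QQ_\ell=\bigoplus_\lambda\Loc_\lambda$ over additive characters $\lambda$ of $\FF_{q^d}$, whence by the projection formula $H_c^\bullet(\Affine^1,\ell^*\Loc_{\psi'})=\bigoplus_\lambda H_c^\bullet(\Affine^1,\Loc_{\psi'}\otimes\Loc_\lambda)$. Each $\Loc_{\psi'}\otimes\Loc_\lambda$ is again a Lang sheaf, via $L_{q^N}$ with $N=\operatorname{lcm}(n,d)$, for an explicit additive character of $\FF_{q^N}$; the key point is that this character is \emph{nontrivial for every} $\lambda$, its ``dual element'' being a sum $\alpha'+\beta$ with $\alpha'\in\F$ determined by $\psi'$ and $\beta\in\FF_{q^d}$ determined by $\lambda$, and the condition $\psi'(w^{q^d})\neq\psi'(w)$ forces $\alpha'\notin\FF_{q^{\gcd(n,d)}}$, so $\alpha'+\beta\neq0$. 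Since a nontrivial Lang local system on $\Affine^1$ has vanishing compactly supported cohomology (it contributes to $H_c^\bullet(\Affine^1,\overline\QQ_\ell)$ only through the trivial-character summand), every summand vanishes. This is precisely where both the relation $j_1-j_2=j_3-j_4$ (to decouple the two monomials) and the primitivity of $\chi$ (to rule out cancellation, via $n\nmid d$) enter essentially.

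Granting this, the stalks of $M$ vanish away from $0\in\GG_a$, so $M\cong \iota_{0,!}\overline\QQ_\ell(-1)[-2]$, where $\iota_0\from\{0\}\hookrightarrow\GG_a$ (the stalk at $0$ being $\overline\QQ_\ell(-1)[-2]$, and identified by proper base change along $\iota_0$). Pulling back along $f$ gives $N=f^*M\cong \iota_{S_3,!}\overline\QQ_\ell(-1)[-2]$, since $S_3=f^{-1}(0)$. Therefore
\begin{equation*}
R\pr_!(P^*\Loc_\chi)\cong N\otimes P_2^*\Loc_\chi\cong \iota_{S_3,!}\big(P_3^*\Loc_\chi\big)(-1)[-2]
\end{equation*}
by the projection formula for the closed immersion $\iota_{S_3}$ together with $P_3=P_2\circ\iota_{S_3}$; applying $H_c^\bullet(S_2,-)$ and the identity $H_c^\bullet(S_2,\iota_{S_3,!}(-))=H_c^\bullet(S_3,-)$ yields $H_c^i(S,P^*\Loc_\chi)\cong H_c^{i-2}(S_3,P_3^*\Loc_\chi)(-1)$. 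All of the isomorphisms above are functorial, hence $\Fr_{q^n}$-equivariant, and the Tate twist $(-1)$ is exactly the contribution of $H_c^2(\Affine^1)$.
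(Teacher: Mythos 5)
Your proof is correct, and its skeleton is the same as the paper's: factor $P$ as the central piece $g\circ B$ times $P_2\circ\pr$, use centrality of $g(\GG_a)$ to split $P^*\Loc_\chi\cong B^*\Loc_\psi\otimes\pr^*P_2^*\Loc_\chi$, apply the projection formula, and compute $R\pr_!$ fiberwise by proper base change, getting the constant contribution $\overline\QQ_\ell(-1)[-2]$ over $f=0$ and vanishing elsewhere. The differences are in the packaging and in the key fiberwise vanishing. You route the computation through the universal family over $\GG_a$ (writing $N=f^*M$), which is a harmless cosmetic variant. More substantively, where the paper establishes $H_c^\bullet(\GG_a,f_x^*\Loc_\psi)=0$ by invoking the Lang-sheaf pullback formula of \cite{B12} (Corollary 6.5), writing $\Loc_\psi=\Loc_z$ and computing the dual element $f_x^*(z)=f(x)^{q^{j_1-j_2}}(z^{q^{-j_2}}-z^{q^{-j_4}})\neq 0$, you give a self-contained Artin--Schreier argument: a change of variable on the fiber (using $j_1-j_2=j_3-j_4$) reduces to $\nu(w)=w^{q^{j_2}}-w^{q^{j_4}}=L_{q^d}(w)^{q^{j_4}}$ with $d=j_2-j_4$, then you decompose $RL_{q^d,!}\overline\QQ_\ell$ into Artin--Schreier sheaves and check each twist $\Loc_{\psi'}\otimes\Loc_\lambda$ is a nontrivial Lang sheaf over $\FF_{q^{\operatorname{lcm}(n,d)}}$, the point being that primitivity and $n\nmid d$ force the dual element $\alpha'\notin\FF_{q^{\gcd(n,d)}}$ so that $\alpha'+\beta\neq 0$. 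Your version buys independence from the cited corollary (and makes explicit exactly where $j_1-j_2=j_3-j_4$, $n\nmid(j_2-j_4)$, and primitivity enter), at the cost of a slightly longer argument; the paper's citation-based route is shorter. The one place both you and the paper are equally terse is the multiplicativity of $\Loc_\chi$ along the central subgroup $g(\GG_a)$ (your trace-function remark is only a sketch; a clean argument uses that $g(\GG_a)$ is connected and central, so the product of Lang torsors maps to the pulled-back Lang torsor), so no gap relative to the paper's own standard.
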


\begin{proof} 
Let $\pr \from S = S_2 \times \Affine^1 \to S_2$ be the first projection, let $\iota \from S_3 \to S_2$ be the inclusion map, and let $\eta \from S \to Z$ be the morphism $(x,y) \mapsto g(f(x)^{q^{j_1}}y^{q^{j_2}}) \cdot g(f(x)^{q^{j_3}}y^{q^{j_4}})^{-1}$. The sheaf $\Loc_\chi$ is not a multiplicative local system on $Z$. However, since the image of $\eta$ lies in the center of the group scheme $Z$, then 
\begin{equation*}
P^* \Loc_\chi \cong (\eta^* \Loc_\chi) \otimes \pr^*(P_2^* \Loc_\chi).
\end{equation*}
Thus, by the projection formula,
\begin{equation*}
R \pr_!(P^* \Loc_\chi) \cong P_2^* \Loc_\chi \otimes R \pr_!(\eta^* \Loc_\chi) \qquad \text{in $D_c^b(S_2, \overline \QQ_\ell).$}
\end{equation*}
I now claim that
\begin{equation*}
R \pr_!(\eta^* \Loc_\chi) \cong \iota_!(\overline \QQ_\ell)[2](1) \qquad \text{in $D_c^b(S_2, \overline \QQ_\ell),$}
\end{equation*}
where $\overline \QQ_\ell$ denotes the constant local system of rank $1$. It is clear that once we have established this, the desired conclusion follows. We therefore spend the rest of the proof proving this.

The restriction of $\eta$ to $\pr^{-1}(S_3) \subset S_2$ is constant, so the restriction of the pullback $\eta^* \Loc_\chi$ to $\pr^{-1}(S_3)$ is a constant local system of rank $1$. Thus 
\begin{equation*}
\iota^* R \pr_!(\eta^* \Loc_\chi) \cong \overline \QQ_\ell[2](1) \qquad \text{in $D_c^b(S_3, \overline \QQ_\ell).$}
\end{equation*}
To complete the proof, we need show that $R \pr_!(\eta^* \Loc_\chi)$ vanishes outside $S_3 \subset S_2$. First notice that $\eta = g \circ \eta_0$ where $\eta_0 \from S \to \GG_a$ is defined as $(x,y) \mapsto f(x)^{q^{j_1}}y^{q^{j_2}} - f(x)^{q^{j_3}}y^{q^{j_4}}$. Let $\psi$ be the restriction of $\chi$ to $g(\GG_a) \subset Z$. Then
\begin{equation*}
\eta^* \Loc_\chi \cong \eta_0^* \Loc_\psi.
\end{equation*}
Here, $\Loc_\psi$ denotes the multiplicative local system on $\GG_a$ induced by $\psi$ via the Lang isogeny. It therefore suffices to show that $R \pr_!(\eta_0^* \Loc_\psi)$ vanishes outside $S_3 \subset S_2$. Now pick $x \in S_2(\overline \FF_q) \smallsetminus S_3(\overline \FF_q)$. By the proper base change theorem,
\begin{equation*}
R^i \pr_!(\eta_0^* \Loc_\psi)_x \cong H_c^i(\GG_a, f_x^* \Loc_\psi),
\end{equation*}
where $f_x \from \GG_a \to \GG_a$ is given by $y \mapsto f(x)^{q^{j_1}}y^{q^{j_2}} - f(x)^{q^{j_3}}y^{q^{j_4}}.$

As in the proof of Proposition 2.10 of \cite{B12}, we can write $\Loc_\psi = \Loc_z$ for some $z \in \F$. Since $\psi$ has conductor $q^n$, then $z$ has trivial $\Gal(\F/\FF_q)$-stabilizer. By Corollary 6.5 of \cite{B12}, we have $f_x^* \Loc_\psi \cong \Loc_{f_x^*(z)}$, where
\begin{equation*}
f_x^*(z) = f(x)^{q^{j_1}/q^{j_2}} z^{1/q^{j_2}} - f(x)^{q^{j_3}/q^{j_4}} z^{1/q^{j_4}} = f(x)^{q^{j_1 - j_2}}(z^{q^{-j_2}} - z^{q^{-j_4}}).
\end{equation*}
But $z^{q^{-j_2}} - z^{q^{-j_4}} \neq 0$ since by assumption $z \neq 0$ and $j_2 - j_4$ is not divisible by $n$ by assumption. Thus $f_x^* \Loc_\psi$ is a nontrivial local system on $\GG_a$ and $H_c^i(\GG_a, f_x^* \Loc_\psi) = 0$ for all $i \geq 0$.
\end{proof}

\begin{proposition}\label{p:B2.10alpha}
Suppose that $P \from S \to Z$ has the form
\begin{equation*}
P(x,y) = g(f(x)^{q^{j_1}} y^{q^{j_2}} - f(x)^{q^{j_3}} y^{q^{j_4}} + \alpha(x,y)^{q^n} - \alpha(x,y)) \cdot P_2(x)
\end{equation*}
for some morphism $\alpha \from S_2 \times \Affine^1 \to \GG_a$ defined over $\FF_{q^n}$. (Here, $j_1, \ldots, j_4$ are as in Proposition \ref{p:B2.10}.) Then under the same conditions as in Proposition \ref{p:B2.10}, we have
\begin{equation*}
H_c^i(S, P^* \Loc_\chi) \cong H_c^{i-2}(S_3, P_3^* \Loc_\chi)(-1)
\end{equation*}
as vector spaces equipped with an action of $\Fr_{q^n}$, where the Tate twist $(-1)$ means that the action of $\Fr_{q^n}$ on $H_c^{i-2}(S_3, P_3^* \Loc_\chi)$ is multiplied by $q^n$.
\end{proposition}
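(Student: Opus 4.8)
The plan is to reduce directly to Proposition \ref{p:B2.10}, the point being that the extra term $\alpha(x,y)^{q^n} - \alpha(x,y)$ is a Lang coboundary and hence invisible to the local system $\Loc_\psi$ that ultimately governs the computation. First I would set $\widetilde\eta \from S \to Z$ to be the morphism $(x,y) \mapsto g\bigl(f(x)^{q^{j_1}}y^{q^{j_2}} - f(x)^{q^{j_3}}y^{q^{j_4}} + \alpha(x,y)^{q^n} - \alpha(x,y)\bigr)$, so that $P(x,y) = \widetilde\eta(x,y)\cdot P_2(x)$. Exactly as in the proof of Proposition \ref{p:B2.10}, the image of $\widetilde\eta$ lies in the central subgroup $g(\GG_a) \subset Z$, so even though $\Loc_\chi$ is not multiplicative, we still get $P^*\Loc_\chi \cong \widetilde\eta^*\Loc_\chi \otimes \pr^*(P_2^*\Loc_\chi)$, where $\pr \from S = S_2 \times \Affine^1 \to S_2$ is the first projection. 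Writing $\psi = \chi|_{g(\GG_a)}$, letting $\Loc_\psi$ be the multiplicative local system on $\GG_a$ attached to $\psi$ via the Lang isogeny $L_{q^n} \from \GG_a \to \GG_a$, $z \mapsto z^{q^n} - z$, and writing $\widetilde\eta = g \circ \widetilde\eta_0$ with $\widetilde\eta_0 = \eta_0 + L_{q^n}\circ\alpha$ and $\eta_0(x,y) = f(x)^{q^{j_1}}y^{q^{j_2}} - f(x)^{q^{j_3}}y^{q^{j_4}}$, we obtain (as in \emph{loc.\ cit.}) $\widetilde\eta^*\Loc_\chi \cong \widetilde\eta_0^*\Loc_\psi$; here $\eta_0$ is precisely the morphism occurring in the proof of Proposition \ref{p:B2.10}.

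Next I would kill the coboundary. Since $\Loc_\psi$ is multiplicative, pulling it back along the addition morphism $\GG_a \times \GG_a \to \GG_a$ gives $\Loc_\psi \boxtimes \Loc_\psi$, so $\widetilde\eta_0^*\Loc_\psi \cong \eta_0^*\Loc_\psi \otimes \alpha^*\bigl(L_{q^n}^*\Loc_\psi\bigr)$. By construction $\Loc_\psi$ is obtained by descending the constant sheaf along the connected \'etale $\FF_{q^n}$-torsor $L_{q^n}$, so $L_{q^n}^*\Loc_\psi \cong \overline\QQ_\ell$ canonically; hence $\alpha^*\bigl(L_{q^n}^*\Loc_\psi\bigr)$ is constant and $\widetilde\eta_0^*\Loc_\psi \cong \eta_0^*\Loc_\psi$. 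Combining the above isomorphisms yields $P^*\Loc_\chi \cong \eta_0^*\Loc_\psi \otimes \pr^*(P_2^*\Loc_\chi) \cong (P')^*\Loc_\chi$, where $P'(x,y) = g\bigl(f(x)^{q^{j_1}}y^{q^{j_2}}\bigr)\cdot g\bigl(f(x)^{q^{j_3}}y^{q^{j_4}}\bigr)^{-1}\cdot P_2(x)$ is exactly the morphism treated in Proposition \ref{p:B2.10}. Applying that proposition to $P'$ then gives $H_c^i(S, P^*\Loc_\chi) \cong H_c^i(S, (P')^*\Loc_\chi) \cong H_c^{i-2}(S_3, P_3^*\Loc_\chi)(-1)$, which is the claim.

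The only genuinely delicate point is to ensure that all of the above isomorphisms are compatible with the $\Fr_{q^n}$-action, so that the Tate twist $(-1)$ in the conclusion is correct. This comes down to the trivialization $L_{q^n}^*\Loc_\psi \cong \overline\QQ_\ell$ being $\Fr_{q^n}$-equivariant, which is the defining property of the Lang-isogeny local system over $\FF_{q^n}$ and holds because $\alpha$ is defined over $\FF_{q^n}$. Everything else is bookkeeping that mirrors the proof of Proposition \ref{p:B2.10} verbatim.
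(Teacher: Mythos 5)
Your proposal is correct and follows essentially the same route as the paper: the paper likewise reduces to Proposition \ref{p:B2.10} by observing that the Lang coboundary $\alpha^{q^n}-\alpha$ does not change the pullback local system, since $\Loc_\chi$ (equivalently $\Loc_\psi$ on $g(\GG_a)$) is trivialized by pullback along the Lang isogeny $z\mapsto z^{q^n}-z$. You merely spell out the multiplicativity of $\Loc_\psi$ and the $\Fr_{q^n}$-equivariance of the trivialization, which the paper leaves implicit.
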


\begin{proof}
Let $P'(x,y) = g(f(x)^{q^j} y - f(x)^{q^n} y^{q^{n-j}})\cdot P_2(x).$ Then $P^* \Loc_\chi$ and $(P')^* \Loc_\chi$ are isomorphic since the pullback of $\Loc_\chi$ by the map $(0, \ldots, 0, z) \mapsto (0, \ldots, 0, z^{q^n})$ is trivial. Then by Proposition \ref{p:B2.10}, the desired conclusion holds.
\end{proof}

The following proposition is extremely useful in the context of applying the inductive argument described by the above propositions.

\begin{proposition}\label{p:claim1gen}
Suppose that $S \hookrightarrow R$ is a finite map of polynomial rings over $k = \overline \FF_q$. Assume that $\Frac R$ is finite Galois over $\Frac S$ with Galois group $G$ a $p$-group. Then
\begin{enumerate}[label=(\alph*)]
\item
$R$ is stable under $G$ and $R^G = S$

\item
the quotient of monoids $((R \smallsetminus \{0\})/k^\times)^G = (S \smallsetminus \{0\})/k^\times$

\item
If $(f) \subset R$ is an ideal such that $(\sigma f) = (f)$ for all $\sigma \in G$, then $f \in S$.
\end{enumerate}
\end{proposition}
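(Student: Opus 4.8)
The plan is to derive parts (b) and (c) from part (a), which is where the geometry---namely the normality of polynomial rings---enters; the rest is Galois theory together with the observation that a finite $p$-group has no nontrivial character valued in $k^\times$ when $k$ has characteristic $p$. Throughout I use that a polynomial ring over a field is a normal domain whose unit group is exactly $k^\times$.

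For part (a), I would first identify $R$ with the integral closure $\widetilde S$ of $S$ in the field $\Frac R$. Since $S \hookrightarrow R$ is finite, $R$ is integral over $S$, so $R \subseteq \widetilde S$; conversely $\widetilde S$ is integral over $S$, hence over $R$, and is contained in $\Frac R$, so $\widetilde S \subseteq R$ because $R$ is integrally closed in its own fraction field. Thus $\widetilde S = R$. Now for any $\sigma \in G$: as $\sigma$ fixes $\Frac S$ pointwise and carries $\Frac R$ onto itself, $\sigma(R)$ is integral over $S$ and contained in $\Frac R$, hence $\sigma(R) \subseteq \widetilde S = R$; applying the same to $\sigma^{-1}$ gives $\sigma(R) = R$, so $G$ acts on $R$. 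Finally $R^G = R \cap (\Frac R)^G = R \cap \Frac S$, and any element of $\Frac S$ lying in $R$ is integral over $S$, hence lies in $S$ by normality of $S$; therefore $R^G = S$.

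Parts (b) and (c) both reduce to the following claim: if $f \in R \smallsetminus \{0\}$ and $(\sigma f) = (f)$ as ideals of $R$ for every $\sigma \in G$, then $f \in S$. Since $R^\times = k^\times$, the hypothesis says $\sigma f = c_\sigma f$ with $c_\sigma \in k^\times$---equivalently, that the class of $f$ in $(R \smallsetminus \{0\})/k^\times$ is $G$-fixed. Because $G$ fixes $k$ pointwise, one checks $c_{\sigma\tau} = c_\sigma c_\tau$, so $\sigma \mapsto c_\sigma$ is a homomorphism $G \to k^\times$. As $G$ is a finite $p$-group and $k$ has characteristic $p$, the group $k^\times$ has no nontrivial element of $p$-power order (if $\zeta^{p^m} = 1$ then $(\zeta - 1)^{p^m} = 0$, so $\zeta = 1$), hence this homomorphism is trivial. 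Therefore $\sigma f = f$ for all $\sigma$, and $f \in R^G = S$ by (a). This is (c) verbatim; and for (b), the containment $(S \smallsetminus \{0\})/k^\times \subseteq \big((R \smallsetminus \{0\})/k^\times\big)^G$ is immediate (and is an inclusion of monoids, since $S \hookrightarrow R$ and $k^\times \subseteq S$), while the reverse containment is exactly the claim just proved.

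I expect no serious obstacle. The one genuinely substantive point is the normality argument in (a)---recognizing that $R$ must be the full integral closure of $S$ in $\Frac R$, which is what makes the $G$-action on $R$ well defined and forces $R^G = S$. After that, (b) and (c) are formal, modulo being careful that the argument places the given generator $f$ in (c) literally in $S$ (which it does, since it produces $\sigma f = f$ exactly, not merely up to a unit).
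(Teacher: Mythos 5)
Your proof is correct, and part (a) is argued exactly as in the paper (polynomial rings are normal, so $R$ is the integral closure of $S$ in $\Frac R$, whence $G$-stability and $R^G = R \cap \Frac S = S$). For (b) and (c) you take a mildly different, more elementary route: the paper first proves (b) by taking $G$-invariants of the exact sequence $1 \to k^\times \to \Frac R^\times \to \Frac R^\times/k^\times \to 1$ and using $H^1(G,k^\times) = \Hom(G,k^\times) = 1$ for a $p$-group in characteristic $p$, and then deduces (c) from (b); you instead prove the (c)-type statement directly by observing $R^\times = k^\times$, checking that $\sigma \mapsto c_\sigma$ (where $\sigma f = c_\sigma f$) is a homomorphism $G \to k^\times$ because $G$ fixes $k$ pointwise, and killing it by the same $p$-group/$p$-torsion observation, after which (b) follows formally. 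The mathematical engine is identical -- the vanishing of $\Hom(G,k^\times)$ -- so the two arguments are really the same idea; yours unpacks the cohomological step into an explicit cocycle computation (avoiding the long exact sequence) and reverses the logical order of (b) and (c), which has the small virtue of making transparent that the generator $f$ in (c) is literally fixed by $G$, a point the paper handles by passing through the quotient monoid.
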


\begin{proof}
First observe that since $S$ and $R$ are polynomial rings, they are normal and therefore integrally closed. Since $S \hookrightarrow R$ is a finite map, $R$ is the integral closure of $S$ in $\Frac R$. Thus $R$ is $G$-stable. It is clear that $S \subset R^G$ and that $R^G$ is integrally closed in $\Frac S$. But since $S$ is integrally closed, we necessarily have $S = R^G$. This proves (a).

To see (b), consider the short exact sequence
\begin{equation*}
1 \to k^\times \to \Frac R^\times \to \Frac R^\times/k^\times \to 1
\end{equation*}
and take $G$-invariants to get a long exact sequence 
\begin{equation*}
1 \to k^\times \to \Frac S^\times \to (\Frac R^\times/k^\times)^G \to H^1(G, k^\times) \to \cdots
\end{equation*}
Since $G$ acts trivially on $k^\times$, we have $H^1(G,k^\times) = \Hom(G, k^\times),$ which is trivial since $G$ is a $p$-group. Thus $(\Frac R^\times/k^\times)^G = \Frac S^\times/k^\times$ and $((R \smallsetminus \{0\})/k^\times)^G = (S \smallsetminus \{0\})/k^\times$.

Now we prove (c). If $f = 0$, then we are done, so for the rest of the proof we may assume $f \neq 0$. Necessarily $\sigma f = f$ up to a unit in $R$, and thus their images in the quotient $(R \smallsetminus \{0\})/k^\times$ are equal. Thus the image of $f$ is in $((R \smallsetminus \{0\})/k^\times)^G = (S \smallsetminus \{0\})/k^\times$, and so $f \in S$.
\end{proof}

\section{Representations of $\UnipF$} \label{s:reps}

Let $\cG$ be the set of irreducible representations of $\UnipkF$ whose central character has trivial $\Gal(L/K)$-stabilizer. Let $\cA$ denote the set of all characters of $H(\F)$ whose restriction to the center $Z(\UnipkF)$ of $\UnipkF$ has trivial $\Gal(L/K)$-stabilizer.

In this section, we show that $\cG$ can be parametrized by $\cA$ and explicitly describe such a parametrization. There are two main cases of behavior, depending on the parameters $n$, $h$, and $k$.

\begin{definition}
Given a triple of positive integers $(n,h,k)$ such that $h \geq k+1$, we say that:
\begin{enumerate}[label=\textbullet]
\item
$(n,h,k)$ is in \textit{Case 1} if $(n-1)(h-k)^+$ is even.

\item
$(n,h,k)$ is in \textit{Case 2} if $(n-1)(h-k)^+$ is odd.
\end{enumerate}
\end{definition}

Consider the following subgroups of $\Unip$:
\begin{align*}
H'(\F) &\colonequals \left\{\sum_{i=0}^{n-1} A_i \tau^i : \text{$A_{ij} = 0$ if $i > 0$ and $j \leq \frac{h-k}{2} - \frac{i}{n}$}\right\} \subset \UnipF \\
H^+(\F) &\colonequals \left\{\sum_{i=0}^{n-1} A_i \tau^i : \text{$A_{ij} = 0$ if $i > 0$ and $j < \frac{h-k}{2} - \frac{i}{n}$ and $A_{n/2,(h-k-1)/2} \in \FF_{q^{n/2}}$}\right\} \\
\end{align*}
We will also need the subgroups
\begin{align*}
H_0'(\F) &\colonequals \{\textstyle \sum_{i=0}^{n-1} A_i \tau^i \in H'(\F) : A_0 \in Z(\UnipkF)\}, \\
H_0^+(\F) &\colonequals \{\textstyle \sum_{i=0}^{n-1} A_i \tau^i \in H^+(\F) : A_0 \in Z(\UnipkF)\}.
\end{align*}
Let
\begin{align}\label{e:cI}
\cI &= \left\{(i,j) : i = 0, \, 1 \leq j \leq h-1\right\} \cup \left\{(i,j) : 1 \leq i \leq n-1, \, \frac{h-k}{2} - \frac{i}{n} < j < h-k\right\}.
\end{align}
Notice that
\begin{align}
\label{e:caseindex} [H^+(\F) : H'(\F)] &= 
\begin{cases}
1 & \text{if $(n,h,k)$ is in Case 1,} \\
q^{n/2} & \text{if $(n,h,k)$ is in Case 2.}\end{cases} \\
\label{e:fullindex} [\UnipF : H^+(\F)] &= q^{n(n-1)(h-k)^+/2}
\end{align}

For $\chi \in \cA$, define an extension $\chi^\sharp$ of $\chi$ to $H'(\F)$ by 
\begin{equation*}
\chi^\sharp(\textstyle \sum A_i \tau^i) \colonequals \chi(A_0).
\end{equation*}
Fix any extension $\widetilde \chi$ of $\chi^\sharp$ to $H^+(\F)$. Note that in Case 1, necessarily $\widetilde \chi = \chi^\sharp$. In Case 2, there are $q^{n/2}$ choices of $\widetilde \chi$ since $H^+(\F)$ is abelian.

\begin{lemma}\label{l:centralext}
If $\rho \in \cG$ has central character $\omega$, then the restriction of $\rho$ to $H_0'(\F)$ contains the character
\begin{equation*}
\omega^\sharp(A_0 + A_1 \tau + \cdots + A_{n-1} \tau^{n-1}) \colonequals \omega(A_0).
\end{equation*}
\end{lemma}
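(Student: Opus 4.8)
The plan is to show that the character $\omega^\sharp$ of $H_0'(\F)$ occurs in $\rho|_{H_0'(\F)}$ by a Frobenius-reciprocity / Clifford-theory argument, using that $\omega^\sharp$ is the unique extension of $\omega$ through the abelian quotient $H_0'(\F)/Z(\UnipkF)$ that we need to track. First I would observe that $H_0'(\F)$ contains $Z(\UnipkF)$ (indeed $Z(\UnipkF) \subset H(\F)$, and the defining condition on the higher $\tau$-coefficients is vacuous for central elements since $Z(\UnipkF)$ sits inside $\bW_h$), and that the quotient $H_0'(\F)/Z(\UnipkF)$ is an abelian $p$-group. Since $\rho$ has central character $\omega$, every irreducible constituent of $\rho|_{H_0'(\F)}$ restricts to $Z(\UnipkF)$ as a multiple of $\omega$; as $H_0'(\F)/Z(\UnipkF)$ is abelian, these constituents are one-dimensional characters of $H_0'(\F)$ extending $\omega$. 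So the content of the lemma is that the \emph{particular} extension $\omega^\sharp$, the one that is trivial on all the $A_i\tau^i$ with $i>0$, is among them.

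The key step is to pin down which extensions of $\omega$ can appear. I would argue that $H_0'(\F)$ is chosen precisely so that its commutator subgroup, together with $Z(\UnipkF)$, exhausts the kernel of $\omega^\sharp$ — or more directly, that any character of $H_0'(\F)$ occurring in $\rho$ must be trivial on the "upper-triangular in $\tau$" part. Concretely: suppose a character $\psi$ of $H_0'(\F)$ extending $\omega$ occurs in $\rho$. Conjugating by elements of $\UnipkF$ that normalize $H_0'(\F)$ (in particular by powers of $\tau$, or by $H(\F)$ itself) permutes such $\psi$'s; I would show that the $\UnipkF$-orbit of any such $\psi$ contains $\omega^\sharp$, using the commutation rule $\tau\cdot a = a^q\cdot\tau$ and the explicit index computations \eqref{e:caseindex}, \eqref{e:fullindex} to control orbit sizes, and then invoke that $\rho$, being an irreducible constituent built from the whole $\UnipkF$-module structure, contains a full orbit. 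Alternatively — and this is probably cleaner — I would set up an explicit ascending chain of subgroups from $Z(\UnipkF)$ up to $H_0'(\F)$ and at each stage use that the relevant subquotient is central in the next-larger group modulo lower terms, so that by an inductive application of Clifford theory the constituent is forced to be trivial on the new coordinates; this is exactly the kind of step that Lemma~\ref{l:B2.2} and Proposition~\ref{p:B2.3} are built to feed into downstream.

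The main obstacle I expect is the bookkeeping of \emph{which} coordinates $A_{ij}$ lie in $H_0'(\F)$ versus which get killed: the defining inequality $j \le \frac{h-k}{2} - \frac{i}{n}$ is asymmetric in $i$, and one must check that the remaining coordinates of $H_0'(\F)$ (those with $i>0$) genuinely lie in a subgroup that is "annihilated" by the central-character constraint — i.e. that they are commutators or that characters occurring in $\rho$ are forced trivial on them. This requires understanding the commutator structure of $\Unipk$ in terms of the parameters $(n,h,k)$, which is where the combinatorics of Section~\ref{s:definitions} and the ring structure of $\cR_{h,k,n,q}$ enter. Once that structure is in hand, the conclusion is a formal consequence of Clifford theory for the tower $Z(\UnipkF) \subset H_0'(\F) \subset \UnipkF$. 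I would also double-check the edge case $(n-1)(h-k)^+ = 0$ (i.e. $h \le k$), where $\Unipk$ degenerates and the statement should be essentially trivial.
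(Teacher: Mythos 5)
Your proposal is correct and follows essentially the same route as the paper: the paper's proof is exactly your "cleaner" alternative, namely an induction up an explicit chain of subgroups $G_1 \subset G_2 \subset \cdots \subset H_0'(\F)$ adding one $\tau$-coordinate at a time, where at each stage one conjugates by elements $1 + V^jB\tau^i$ of $\UnipkF$ and uses that $\omega$ has conductor $q^n$ to see that the resulting commutator pairing $A \mapsto \psi(BA^{q}-AB^{q^{n-1}})$ sweeps out all extensions, so Clifford theory forces the particular extension $\omega^\sharp$ to occur. The only caveat is your side remark that constituents of $\rho|_{H_0'(\F)}$ are automatically one-dimensional because $H_0'(\F)/Z(\UnipkF)$ is abelian; this is not needed for (and is bypassed by) the coordinate-by-coordinate induction, which is how the paper proceeds.
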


\begin{proof}
We prove this by induction on the subgroups
\begin{align*}
G_1 &\colonequals \{A_0 + V^{h-k-1} A_1 \tau^{n-1}\} \subset H_0'(\F), \\
G_2 &\colonequals \{A_0 + V^{h-k-1} A_2 \tau^{n-2} + V^{h-k-1} A_1 \tau^{n-1}\} \subset H_0'(\F), \ldots 
\end{align*}
Consider the extension $\omega_1$ of $\omega$ to $G_1$ defined as
\begin{equation*}
\omega_1 \from G_1 \to \overline \QQ_\ell^\times, \qquad A_0 + V^{h-k-1} A_1 \tau^{n-1} \mapsto \psi(A_0).
\end{equation*}
Then for any $g_1 = 1 + VB \tau \in G_1$ and $h = A_0 + V^{h-k-1} A \tau^{n-1} \in H_0'(\F)$, we have
\begin{align*}
{}^{g_1} \omega^\sharp(h) 
&= \omega^\sharp(A_0 + V^{h-1}(BA^q - AB^{q^{n-1}}))
= \omega^\sharp(A_0) \psi(BA^q - AB^{q^{n-1}}).
\end{align*}
Since $\psi$ has conductor $q^n$, every character $\F \to \overline \QQ_\ell^\times$ can be written as $A \mapsto \psi(BA^q - AB^{q^{n-1}})$ for some $B \in \F$. Thus the restriction of $\rho$ to $G_1$ contains $\omega_1$. Applying the above argument for each $G_i \subseteq H_0'(\F)$ inductively proves that the restriction of $\rho$ to $H_0'(\F)$ contains $\omega^\sharp$.

Suppose that we are in Case 2 and let $i = n/2$ and $j = (h-k-1)/2$. Let $\widetilde \omega$ be any extension of $\omega^\sharp$ to $H_0^+(\F)$. To prove that $\rho|_{H_0^+(\F)}$ contains $\widetilde \omega$, it is enough to prove that the orbit of $\widetilde \omega$ under $\UnipkF$ conjugacy contains every extension of $\omega^\sharp$ to $H_0^+(\F)$. Indeed, for $g = 1 + V^jB \tau^i \in \UnipkF$ and $h = \sum A_i \tau^i \in H_0^+(\F)$, we have
\begin{align*}
\widetilde \omega \Big((1 + V^j B \tau^i)&(A_0 + A_1 \tau + \cdots + A_{n-1} \tau^{n-1})((1 + V^{h-1}B^{q^i+1}) - V^jB \tau^i)\Big) \\
&= \widetilde \omega\Big((A_0 + V^{h-1}(A_i(B - B^{q^i}))) + A_i \tau^i\Big) \\
&= \widetilde \omega(A_0 + A_i \tau^i) \psi(A_i(B - B^{q^i})).
\end{align*}
Since $\psi$ has conductor $q^n$, the 
\begin{equation*}
\#\{A_i \mapsto \psi(A_i(B - B^{q^{n/2}}))\} = q^{n/2},
\end{equation*}
and this completes the proof.
\end{proof}

\begin{theorem}\label{t:irrepdesc}
For any $\chi \in \cA$, the representation $\rho_\chi \colonequals \Ind_{H^+(\F)}^{\UnipkF}(\widetilde \chi)$ is irreducible with dimension $q^{n(n-1)(h-k)^+/2}$. Moreover, $\cG = \{\rho_\chi : \chi \in \cA\}$.
\end{theorem}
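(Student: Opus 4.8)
The plan is to prove irreducibility and dimension first, and then deduce that $\cG = \{\rho_\chi : \chi \in \cA\}$ by a counting argument over central characters.

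\textbf{Irreducibility and dimension.} First I would compute the dimension: by \eqref{e:fullindex}, $[\UnipkF : H^+(\F)] = q^{n(n-1)(h-k)^+/2}$, so the induced representation $\rho_\chi = \Ind_{H^+(\F)}^{\UnipkF}(\widetilde\chi)$ has the claimed dimension immediately. For irreducibility I would apply Mackey's irreducibility criterion: $\Ind_{H^+(\F)}^{\UnipkF}(\widetilde\chi)$ is irreducible if and only if for every $g \in \UnipkF \smallsetminus H^+(\F)$, the characters $\widetilde\chi$ and ${}^g\widetilde\chi$ disagree on $H^+(\F) \cap g H^+(\F) g^{-1}$. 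The key input is that $\widetilde\chi$ restricted to $H_0^+(\F)$ (or more precisely its ``central part'') is primitive in the sense that $\chi|_{Z(\UnipkF)}$ has trivial $\Gal(L/K)$-stabilizer; the conjugation computations in the proof of Lemma \ref{l:centralext} — where one sees that conjugating by $1 + V^j B\tau^i$ twists $\widetilde\omega(A_0 + A_i\tau^i)$ by $\psi(A_i(B - B^{q^i}))$ and similar formulas — show that any nontrivial conjugation actually moves $\widetilde\chi$. Concretely, I would filter $\UnipkF/H^+(\F)$ by the coordinates $A_{ij}$ with $(i,j)$ ranging over the complement of $\cI$ (the indices cut out in $H^+(\F)$), and for each such coordinate exhibit an element $g$ whose conjugation action twists $\widetilde\chi$ by a nontrivial additive character built from $\chi$'s restriction to the center; here the hypothesis that the central character has trivial Galois stabilizer (equivalently, $\psi$ has conductor exactly $q^n$, which was used repeatedly above) guarantees nontriviality. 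This handles the Mackey criterion and yields irreducibility.

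\textbf{Surjectivity onto $\cG$.} Having shown each $\rho_\chi$ is irreducible of the stated dimension, I would show every $\rho \in \cG$ is isomorphic to some $\rho_\chi$. Given $\rho \in \cG$ with central character $\omega$, Lemma \ref{l:centralext} tells us $\rho|_{H_0'(\F)}$ contains $\omega^\sharp$, and (in Case 2) $\rho|_{H_0^+(\F)}$ contains some extension $\widetilde\omega$ of $\omega^\sharp$. Choose a character $\chi$ of $H(\F)$ extending $\omega$ (and in Case 2, extending the chosen $\widetilde\omega$ on $H^+(\F) \cap H(\F)$) — this $\chi$ lies in $\cA$ since its restriction to $Z(\UnipkF)$ is $\omega|_{Z}$, which has trivial Galois stabilizer by definition of $\cG$. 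Then by Frobenius reciprocity, $\Hom_{\UnipkF}(\rho_\chi, \rho) = \Hom_{H^+(\F)}(\widetilde\chi, \rho|_{H^+(\F)}) \neq 0$ because $\rho|_{H^+(\F)}$ contains $\widetilde\chi$ (this is exactly what Lemma \ref{l:centralext} provides, after the choice of extension). Since $\rho_\chi$ is irreducible, this forces $\rho_\chi \cong \rho$. Thus $\cG \subseteq \{\rho_\chi : \chi \in \cA\}$, and combined with irreducibility of each $\rho_\chi$ (and the observation that $\rho_\chi$ has central character $\chi|_{Z(\UnipkF)}$, so it indeed lies in $\cG$) we get equality.

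\textbf{Main obstacle.} The genuinely delicate step is the Mackey computation for irreducibility: one must verify, uniformly in both Case 1 and Case 2 and for all $(i,j)$ outside $\cI$, that conjugation by a representative of a nontrivial coset of $H^+(\F)$ twists $\widetilde\chi$ by a nontrivial character on the relevant intersection subgroup. This requires a careful bookkeeping of the commutation relations $\tau \cdot a = a^q\cdot \tau$ together with the $\pi$-adic truncations defining $\cR_{h,k,n,q}$, much in the spirit of the displayed computations in the proof of Lemma \ref{l:centralext}, and the borderline coordinate $A_{n/2,(h-k-1)/2}$ in Case 2 needs separate attention since it is exactly where $H^+(\F)$ and $H'(\F)$ differ. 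I would organize this as a short induction along a subnormal series of $\UnipkF$ refining $H^+(\F)$, reducing at each step to the nonvanishing of an additive character of $\F$ of conductor $q^n$ evaluated on a nonzero element — which is where primitivity of $\chi$ enters decisively.
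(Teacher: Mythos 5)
Your proposal follows essentially the same route as the paper: dimension from the index formula \eqref{e:fullindex}, irreducibility by Mackey's criterion with the trivial-Galois-stabilizer (conductor $q^n$) hypothesis forcing the conjugation twists $\psi(V^j(B)V^{j'}(A)^{q^i} - V^{j'}(A)V^j(B)^{q^{i'}})$ to be nontrivial, and the borderline coordinate $i=n/2$, $j=(h-k-1)/2$ in Case 2 treated separately. Your surjectivity argument via Lemma \ref{l:centralext} and Frobenius reciprocity is exactly the step the paper leaves implicit after that lemma, so the two proofs agree in substance.
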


\begin{proof}
The dimension follows from \eqref{e:fullindex}. To prove irreducibility, we use Mackey's criterion. First note that it is clear that $H'(\F)$ centralizes $\chi^\sharp$ and $H^+(\F)$ centralizes $\widetilde \chi$. We must show that these are exactly the centralizers of these characters.

Let $V \from \bW \to \bW$ denote the Verschiebung map. Consider $1 + V^j B \tau^i \in \UnipkF \smallsetminus H'(\F)$ with $i \neq n/2$. Then $(i', j') \colonequals (n-i, h-1-k-j) \in \cI$ and for any $A \in \F$,
\begin{align*}
\widetilde\chi\Big((1 + V^j B \tau^i)&(1 + V^{j'} A \tau^{i'})(1 - V^j B \tau^i + \cdots)\Big) \\
&= \widetilde\chi\Big((1 + V^j(B)V^{j'}(A)^{q^i} - V^{j'}(A)V^j(B)^{q^{i'}}) + V^{j'} A \tau^{i'}\Big) \\
&= \widetilde \chi\Big(1 + V^{j'}(A) \tau^{i'}\Big) \cdot \psi\Big(V^j(B)V^{j'}(A)^{q^i} - V^{j'}(A)V^j(B)^{q^{i'}}\Big).
\end{align*}
Since $\psi$ has conductor $q^n$, it follows that $1 + V^j B \tau^i$ does not centralize $\widetilde \chi$. Now consider $1 + V^j B \tau^i \in \UnipkF \smallsetminus H^+(\F)$ with $i = n/2$ and $j = (h-k-1)/2$ so that $B \in \F \smallsetminus \FF_{q^{n/2}}$. Then for any $A \in \FF_{q^{n/2}}$, 
\begin{align*}
\widetilde \chi \Big((1 + V^j B \tau^i)&(1 + V^j A \tau^i)((1 + V^{h-1}B^{q^i+1}) - V^j B \tau^i)\Big)\\
&= \widetilde \chi \Big((1 + V^{h-1}(A(B-B^{q^i})) + V^j A \tau^i\Big) \\
&= \widetilde \chi (1 + V^j A \tau^i) \cdot \psi(A(B - B^{q^i})).
\end{align*}
Again, since $\psi$ has conductor $q^n$, it follows that $1 + V^j B \tau^i$ does not centralize $\widetilde \chi$. This completes the proof.
\end{proof}

\section{Juggling sequences, Witt vectors, and the varieties $X_h$}\label{s:jugglingdesc}

We give a description of $X_h$ in terms of juggling sequences that will be crucial in understanding the cohomology groups $H_c^i(X_h, \overline \QQ_\ell)$. In this section, we also include some computational lemmas that will be used in the proof of Theorem \ref{t:cohomdesc}.

\subsection{Juggling sequences}

\begin{definition}
A \textit{juggling sequence} of \textit{period $n$} is a sequence $(j_1, \ldots, j_n)$ of nonnegative integers satisfying the following condition:
\begin{equation*}
\text{The integers $i + j_i$ are all distinct modulo $n$.}
\end{equation*}
The \textit{number of balls} of a juggling sequence is the \textit{average} of a juggling sequence, $\displaystyle\frac{1}{n} \sum_{i=1}^n j_i$.
\end{definition}

Throughout, all juggling sequences will be of a fixed period $n$. The following lemmas are straightforward.

\begin{lemma}[Properties of juggling sequences]\label{l:juggprops}
\mbox{}
\begin{enumerate}[label=(\alph*)]
\item
If $(j_1, \ldots, j_n)$ is a juggling sequence, there exists a unique permutation $\sigma \in S_n$ such that
\begin{equation*}
(j_1, \ldots, j_n) \equiv (\sigma(1)-1, \ldots, \sigma(n)-n) \mod n.
\end{equation*}
Given a juggling sequence $j$, we will denote the corresponding permutation by $\sigma_j$.

\item
Let $c = (1 2 \cdots n) \in S_n$ and let $j$ be a juggling sequence. Then $\sigma_{c \cdot j} = c^{-1}\sigma_j c$. In particular, the map $j \mapsto \sgn \sigma_j$ is invariant under cyclic permutations.
\end{enumerate}
\end{lemma}

%\begin{proof}
%This is an easy computation.
%\end{proof}

\begin{lemma}\label{l:juggleshape}
Let $j$ be a juggling sequence of period $n$ with $r$ balls and let $e_i \in \ZZ^n$ denote the $n$-tuple with a $1$ in the $i$th coordinate and $0$'s elsewhere
\begin{enumerate}[label=(\alph*)]
\item
If $j$ has a coordinate labelled $rn$, then $j = (rn) \cdot e_1$ up to cyclic permutation.

\item
Let $s \leq rn$ be a positive integer with $n \nmid s$. Let $\bar s = s \pmod n$. If $j$ consists of coordinates labelled only by $0,$ $s$, and $rn-s$, then $j = s \cdot e_1 + (rn-s) \cdot e_{\bar s + 1}$ up to cyclic permutation.
\end{enumerate}
\end{lemma}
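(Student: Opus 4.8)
The plan is to read off the problem in two stages: first extract the \emph{multiplicities} of the three admissible values from the "number of balls" constraint $\frac1n\sum_i j_i=r$, and then invoke the juggling condition a second time to pin down the \emph{positions} of the nonzero coordinates. Part (a) I would dispose of immediately: since $j_1+\cdots+j_n=rn$ and every $j_i\ge 0$, having a single coordinate equal to $rn$ forces all the others to vanish, and rotating cyclically so that this coordinate lands in the first slot gives $j=(rn)\cdot e_1$.

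For part (b) I would read the hypothesis as: the coordinates of $j$ take only the values $0$, $s$, $rn-s$, with $s$ and $rn-s$ each actually \emph{occurring}. (The occurrence clause seems genuinely needed --- e.g. $(2,2,2,2)$ is a period-$4$ juggling sequence with two balls all of whose coordinates lie in $\{0,2,6\}=\{0,s,rn-s\}$ for $s=2$, yet it is not of the asserted shape --- so I would want to confirm this matches the paper's intended meaning of "consists of coordinates labelled only by".) Assume first $s\ne rn-s$, and let $a$, $b$ be the numbers of coordinates equal to $s$ and to $rn-s$ respectively, so $a,b\ge 1$. Then $\sum_i j_i=rn$ reads $as+b(rn-s)=rn$, i.e. $(a-b)s=(1-b)rn$. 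If $b\ge 2$ this forces $a<b$ and $s=\frac{(b-1)rn}{b-a}$; but $s\le rn$ then gives $a\le 1$, hence $a=1$, hence $s=rn$, contradicting $n\nmid s$. So $b=1$, whereupon $(a-1)s=0$ gives $a=1$. Thus $j$ has exactly one coordinate equal to $s$, exactly one equal to $rn-s$, and the remaining $n-2$ equal to $0$.

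Next I would locate these two coordinates with a second use of the juggling condition. Say $s$ occupies position $x$ and $rn-s$ position $y$, with $x\ne y$. Since $rn-s\equiv -s\pmod n$, the requirement that $i\mapsto i+j_i$ be a bijection of $\ZZ/n\ZZ$ says $\{x+s,\,y-s\}\cup\bigl(\ZZ/n\ZZ\setminus\{x,y\}\bigr)=\ZZ/n\ZZ$, forcing $\{x+s,y-s\}=\{x,y\}$ in $\ZZ/n\ZZ$; as $n\nmid s$ excludes $x+s\equiv x$, we get $y\equiv x+s$. Rotating cyclically so that $x$ becomes the first coordinate then sends $y$ to $\bar s+1\in\{2,\dots,n\}$, giving $j=s\cdot e_1+(rn-s)\cdot e_{\bar s+1}$. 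The degenerate case $s=rn-s$ (so $2s=rn$ and the values lie in $\{0,s\}$) goes through identically: now $as=rn=2s$ forces $a=2$, the two $s$-positions $x,z$ satisfy $\{x+s,z+s\}=\{x,z\}$, hence $z\equiv x+s$, and rotating yields the same conclusion.

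The two applications of the juggling/bijectivity condition are routine bookkeeping, so I do not anticipate a serious obstacle. The one point that needs care is the short arithmetic excluding $b\ge 2$, and --- more importantly --- fixing the correct reading of the hypothesis, since under the literal "values contained in $\{0,s,rn-s\}$" reading the statement is false; getting that reading right is really the crux of writing this lemma down cleanly.
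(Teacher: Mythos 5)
The paper gives no proof of this lemma at all (it is filed under ``the following lemmas are straightforward''), so there is no argument of the paper's to compare against; your proof is correct and complete. Part (a) is the immediate nonnegativity argument, and in part (b) both steps check out: the arithmetic forcing the multiplicities $a=b=1$ (including the exclusion of $b\ge 2$ via $s\le rn$ and $n\nmid s$), the bijectivity-of-$i\mapsto i+j_i$ argument forcing $y\equiv x+s$, and the degenerate case $2s=rn$. Your caveat about the hypothesis is also well taken and worth recording: under the literal reading ``all coordinates lie in $\{0,s,rn-s\}$'' the statement is false, as $(2,2,2,2)$ with $n=4$, $r=2$, $s=2$ shows, so the lemma must be read as requiring both $s$ and $rn-s$ to actually occur --- which is consistent with how it is invoked in the proof of Lemma~\ref{l:poly}, where one only considers juggling sequences in which the coordinate $mn-r_s$ genuinely contributes.
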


\subsection{$\cO_K$-Witt vectors}

The following lemmas are well-known.

\begin{lemma}\label{l:Zwitt}
The polynomials $S_r, M_r \in \cO_K[X_0, \ldots, X_r, Y_0, \ldots, Y_r]$ are
\begin{align*}
S_r(X,Y) &= X_r + Y_r + \sum_{i=1}^r \frac{1}{\pi^i} (X_{r-i}^{q^i} + Y_{r-i}^{q^i} - S_{r-i}(X,Y)^{q^i}), \\
M_r(X,Y) &= \sum_{i = 0}^r \pi^i\left(\sum_{j=0}^{r-i} X_{i+j}^{q^{r-i-j}} Y_{r-j}^{q^j}\right) + \sum_{i=1}^r \frac{1}{q^i}\left(\left(\sum_{j=0}^{r-i} X_j^{q^{r-j}} Y_{r-i-j}^{q^{i+j}}\right) - M_i(X,Y)^{q^i}\right).
\end{align*}
\end{lemma}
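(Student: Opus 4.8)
The plan is to prove both identities by induction on $r$, relying only on the defining ghost relations $W_r(S(X,Y)) = W_r(X) + W_r(Y)$ and $W_r(M(X,Y)) = W_r(X)\cdot W_r(Y)$, together with the fact recorded above in the construction of $\bW_K$ that the $S_r$ and $M_r$ are uniquely determined as elements of $\cO_K[X_0,\dots,X_r,Y_0,\dots,Y_r]$ by these relations. The key observation is that once $S_0,\dots,S_{r-1}$ are known, the level-$r$ relation reads
\[
\pi^r S_r = \bigl(W_r(X) + W_r(Y)\bigr) - \sum_{j=0}^{r-1}\pi^j S_j^{q^{r-j}},
\]
which pins down $S_r$ inside $\cO_K[\dots][\pi^{-1}]$; so it suffices to check that the claimed right-hand side, once multiplied through by $\pi^r$, reproduces this identity — and analogously for $M_r$.

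For the addition polynomials the base case $r = 0$ is $S_0 = X_0 + Y_0$ (the displayed sum is empty). For the inductive step, substitute $W_r(X) = \pi^r X_r + \sum_{l=0}^{r-1}\pi^l X_l^{q^{r-l}}$ (and similarly for $Y$) into the displayed relation above, divide by $\pi^r$, and reindex by $i = r - l$; this returns precisely the asserted formula for $S_r$, and uniqueness of $S_r$ closes the case. For the multiplication polynomials the strategy is identical: starting from $\pi^r M_r = W_r(X)W_r(Y) - \sum_{j=0}^{r-1}\pi^j M_j^{q^{r-j}}$, one expands $W_r(X)W_r(Y) = \sum_{a,b=0}^r \pi^{a+b} X_a^{q^{r-a}} Y_b^{q^{r-b}}$ and splits the double sum at $a + b = r$. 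The part with $a + b \ge r$ is already divisible by $\pi^r$ and, after dividing and reindexing (by $i = a+b-r$ together with $j = r - b$), becomes the first sum in the statement; the part with $a + b < r$, together with the correction terms $-\sum_{j}\pi^{j-r}M_j^{q^{r-j}}$ (reindexed by $i = r - j$), combines into the remaining sum. One more appeal to uniqueness of $M_r$ finishes the induction.

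These manipulations are elementary, so the only thing that really needs care is the bookkeeping in the $M_r$ step — verifying that the reindexed double sum carries exactly the ranges $0 \le i \le r$, $0 \le j \le r - i$, and that the ``fractional'' contributions (the $a+b<r$ part of the product and the $M_j$-corrections) cancel against one another so that no negative power of $\pi$ survives. If one prefers a self-contained argument rather than citing the membership $S_r, M_r \in \cO_K[\dots]$ already established in the paper, the additional input is precisely the classical integrality lemma for Witt vectors in its $\cO_K$-relative form: over the $\pi$-torsion-free ring $\cO_K[X_\bullet, Y_\bullet]$, with the $q$-power operation on the variables playing the role of a Frobenius lift, the Frobenius congruences between successive ghost components force the recursively defined $S_r$ and $M_r$ to be $\pi$-integral. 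Since that integrality is in hand from the construction of $\bW_K$, I would simply invoke it and confine the proof to verifying the explicit shapes via the recursions above.
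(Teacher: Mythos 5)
Your approach is essentially the only one available and is sound: the paper offers no proof of this lemma (it is quoted as well-known), and your argument — solve the ghost relations $W_r(S)=W_r(X)+W_r(Y)$ and $W_r(M)=W_r(X)W_r(Y)$ recursively for the top component, divide by $\pi^r$, and invoke the integrality already asserted in the construction of $\bW_K$ together with uniqueness over the $\pi$-torsion-free ring $\cO_K[X_0,\dots,Y_r]$ — handles $S_r$ completely, and splitting the double sum for $W_r(X)W_r(Y)$ at $a+b=r$ is the right bookkeeping for $M_r$. One caveat, precisely at the bookkeeping step you flag as the delicate one: carried out literally, your reindexing of the correction terms yields $-\sum_{i=1}^{r}\pi^{-i}M_{r-i}(X,Y)^{q^{i}}$, paired with the $a+b=r-i$ diagonal $\sum_{j=0}^{r-i}X_j^{q^{r-j}}Y_{r-i-j}^{q^{i+j}}$; this does \emph{not} coincide with the second sum as printed in the lemma, which has $\frac{1}{q^{i}}$ where $\frac{1}{\pi^{i}}$ should stand and $M_i(X,Y)^{q^{i}}$ where $M_{r-i}(X,Y)^{q^{i}}$ should stand. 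The printed version is a typo rather than a defect in your argument: already at $r=1$ it would read $M_1=X_0^qY_1+X_1Y_0^q+\pi X_1Y_1+\frac{1}{q}\bigl(X_0^qY_0^q-M_1^q\bigr)$, which is circular and false, whereas the corrected recursion gives the true $M_1=X_0^qY_1+X_1Y_0^q+\pi X_1Y_1$. So your derivation proves the corrected statement (which is also the form actually used later, e.g.\ in the mod-$\pi$ reduction of Lemma \ref{l:pwitt}); you should state this explicitly rather than asserting that the reindexed sums reproduce the display verbatim.
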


\begin{notation}
We write $\epsilon_r$ to mean any polynomial in $X_i^\alpha Y_j^\beta$ with $i + j < r$. We write $\delta_r$ to mean any polynomial whose monomials are products of indeterminants whose indices are $<r$. \hfill $\Diamond$
\end{notation}

\begin{lemma}\label{l:pwitt}
Let $A$ be an $\FF_p$-algebra. Then for $X,Y \in \bW(A)$,
\begin{align*}
(X +_\bW Y)_r &= X_r + Y_r + \epsilon_r, \\
(X \times_\bW Y)_r &= \sum_{j=0}^{r} X_{j}^{p^{r-j}} Y_{r-j}^{p^j} + \epsilon_r.
\end{align*}
\end{lemma}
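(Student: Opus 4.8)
The plan is to read the two identities directly off the closed recursions in Lemma~\ref{l:Zwitt}, after reducing to the universal situation. Since the asserted identities are polynomial identities valid for every $\FF_p$-algebra $A$, it is enough to verify them in the initial $\FF_p$-algebra on the symbols $X_0,X_1,\ldots,Y_0,Y_1,\ldots$; there they become statements about the structure polynomials $S_r$ and $M_r$ of $\bW$ viewed as a ring scheme over $\FF_q$, which are the reductions modulo $\pi$ of the integral polynomials exhibited in Lemma~\ref{l:Zwitt} (in the equal-characteristic case $\bW(A)=A[\![\pi]\!]$ the same bookkeeping applies verbatim and is simpler). I would then induct on $r$, the base case $r=0$ being $S_0=X_0+Y_0$ and $M_0=X_0Y_0$.

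For the addition identity, Lemma~\ref{l:Zwitt} gives
\begin{equation*}
S_r = X_r + Y_r + \sum_{i=1}^{r}\frac{1}{\pi^i}\bigl(X_{r-i}^{q^i}+Y_{r-i}^{q^i}-S_{r-i}(X,Y)^{q^i}\bigr),
\end{equation*}
and for each $i\ge 1$ the summand involves only $X_{r-i}$, only $Y_{r-i}$, and — by the inductive hypothesis — the variables $X_j,Y_j$ with $j\le r-i$ through $S_{r-i}(X,Y)$. Hence the whole correction, and its reduction modulo $\pi$, is a polynomial in $\{X_j,Y_j : j<r\}$, i.e.\ an $\epsilon_r$. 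For the multiplication identity I would split the Lemma~\ref{l:Zwitt} expression for $M_r$ into three parts: the $i=0$ contribution $\sum_{j=0}^{r}X_j^{q^{r-j}}Y_{r-j}^{q^j}$, the remaining contributions $\pi^i(\cdots)$ with $i\ge 1$, and the correction built from the $M_i(X,Y)^{q^i}$ with $i<r$ together with monomials $X_j^{q^{r-j}}Y_{r-i-j}^{q^{i+j}}$ whose two indices are both $<r$. Reducing modulo $\pi$ annihilates all terms carrying a factor $\pi^i$ with $i\ge 1$, leaving exactly the leading term plus a polynomial in the variables of index $<r$ (using that each $M_i$ with $i<r$, hence $M_i^{q^i}$, involves only variables of index $\le i<r$); this remainder is an $\epsilon_r$, and the leading term matches the one in the statement (identifying the $q$-power Frobenius with the Frobenius of the statement).

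The step requiring the most care is not an "obstacle" of the usual kind but a matter of pinning down the notation: one must make sure that the liberal reading of the symbol $\epsilon_r$ is the intended one, so that the $q^i$-th powers $S_{<r}^{q^i}$ and $M_{<r}^{q^i}$ — which in general contain genuine cross-monomials involving several $X_j$'s and several $Y_j$'s — still qualify as $\epsilon_r$; this follows by writing each such monomial as a product of the elementary pieces $X_j^{\alpha}$ and $Y_j^{\beta}$ with $j<r$. The only other point to keep in mind is the legitimacy of the phrase "reduce modulo $\pi$," i.e.\ that the bracketed expressions divided by $\pi^i$ above lie in $\cO_K[X_\bullet,Y_\bullet]$ before reduction, which is the standard integrality statement underlying Lemma~\ref{l:Zwitt} itself.
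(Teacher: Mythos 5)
The paper offers no proof of this lemma at all --- it is introduced with ``The following lemmas are well-known'' --- so there is no argument of the paper to compare against; your derivation of Lemma~\ref{l:pwitt} from Lemma~\ref{l:Zwitt} by reducing modulo $\pi$ and inducting on $r$ is the natural route, and it is correct in substance. The three-way split of $M_r$ (leading $i=0$ term, terms carrying an honest factor $\pi^i$ with $i\ge 1$ which die modulo $\pi$, and the integral corrections built from $M_i^{q^i}$ with $i<r$ and from monomials whose indices are all $<r$) is exactly the right bookkeeping, and you correctly flag the two points that need care: that $\epsilon_r$ must be read liberally as ``any polynomial in the variables of index $<r$'' (the stricter literal reading of $X_i^\alpha Y_j^\beta$ would make the lemma false, since genuine Witt corrections contain monomials in several variables), and that the integrality of the $\pi^{-i}(\cdots)$ brackets is what licenses reduction modulo $\pi$. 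Two small caveats. First, your parenthetical that the equal-characteristic case goes through ``verbatim'' is not right as stated: when $\bW(A)=A[\![\pi]\!]$ the multiplication is the untwisted convolution $(X\times_\bW Y)_r=\sum_j X_jY_{r-j}$, with no Frobenius twists, so the displayed formula only makes sense for the mixed-characteristic ($\cO_K$-Witt vector) case --- which is also the only case in which the paper uses it, the equal-characteristic computations elsewhere being done directly on power series. Second, as you note, the exponents $p^{r-j}$, $p^j$ in the statement should be read as $q^{r-j}$, $q^j$ to be consistent with Lemma~\ref{l:Zwitt} and with the way the formula is used in Lemma~\ref{l:Xhpolys}(b); with that understanding your identification of the leading terms is correct.
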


\subsection{The varieties $X_h$}

We now use the above definitions together with some basic computational results about the ring of Witt vectors to describe the varieties $X_h \subset \Unipk$. We coordinatize $\Unipk = \Affine^{(h-1) + (n-1)(h-k)^+}$ by writing $A_0 + A_1 \tau + \cdots + A_{n-1} \tau^{n-1} \in \Unipk$ with $A_0 = (x_0, x_n, \ldots, x_{(h-1)n}) \in \bW_h$ and $A_i = (x_i, x_{i+n}, \ldots, x_{i+(h-k-1)n}) \in \bW_{h-k}$ for $i = 1, \ldots, n-1$. Let 
\begin{equation}
\sI \colonequals \{i : \text{$0 \leq i \leq (h-1)n$ if $n \mid i$, and $0 \leq i \leq (h-k-1)n$ if $n \nmid i$}\}.\label{e:sI}
\end{equation}
Given a juggling sequence $j \in \sI^n$, we have an associated permutation $\sigma \in S_n$. Let
\begin{equation*}
f_j \colonequals \#\{r : r > \sigma(r)\}
\end{equation*}
denote the number of anti-exceedances of $\sigma$.

\begin{lemma}\label{l:Xhpolys}
\begin{enumerate}[label=(\alph*)]
\item
In the equal characteristics case, the scheme $X_h \subset \Unip$ is defined by the vanishing of the polynomials
\begin{equation*}
g_{r} \colonequals \sum_j (-1)^{\sgn(\sigma_j)} x_{j_1}^q x_{j_2}^{q^2} \cdots x_{j_{n-1}}^{q^{n-1}}(x_{j_n}^{q^n} - x_{j_n}),
\end{equation*}
where $r=mn$, $x_0 \colonequals 1,$ and the sum ranges over juggling sequences $j = (j_1, \ldots, j_n) \in \sI^n$ with $|j| = \sum j_i = (m - (k-1) f_j)n$.

\item
In the mixed characteristics case, the scheme $X_h \subset \Unip$ is defined by the vanishing of the polynomials
\begin{equation*}
g_{r} \colonequals \sum_j (-1)^{\sgn \sigma_j} x_{j_1}^{q^{m- \lfloor j_1/n \rfloor}q} 
%x_{j_2}^{p^{m- \lfloor n/j_2 \rfloor} q^2} 
\cdots x_{j_{n-1}}^{p^{m- \lfloor j_{n-1}/n \rfloor} q^{n-1}}(x_{j_n}^{q^n} - x_{j_n})^{p^{m- \lfloor j_n/n \rfloor}} + \epsilon_{nm}.
\end{equation*}
where $r = mn$, $x_0 \colonequals 1,$ and the sum ranges over juggling sequences $j = (j_1, \ldots, j_n) \in \sI^n$ with $|j| = \sum j_i = r - (k-1) f_j n$.
\end{enumerate}
\end{lemma}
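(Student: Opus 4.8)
The plan is to compute, for each $r = mn$ with $1 \le m \le h-1$, the equation cut out on $\Unip$ by the condition that the $r$-th Witt coordinate of $\det\bigl(\iota_{h,k}(\sum A_i\tau^i)\bigr)$ is $\varphi$-fixed, i.e.\ equals its own $q$-th power. Recall from Definition \ref{d:unip} and the definition of $X_h$ that $X_h(A)$ consists of those $\sum A_i\tau^i \in \Unip(A)$ for which $\det\iota_{h,k}(\sum A_i\tau^i) \in \bW_h(A)$ is fixed by $\varphi$; writing $D = \det\iota_{h,k}(\sum A_i\tau^i) = (D_0, D_n, D_{2n}, \ldots) \in \bW_h(A)$, the first coordinate $D_0$ is automatically $1$ because $A_0 = (1,*,\ldots,*)$ and the matrix $\iota_{h,k}$ is block-triangular modulo $\pi$ with diagonal entries $\varphi^i(A_0) \equiv 1$. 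The $\varphi$-fixed condition on $D_n, \ldots, D_{(h-1)n}$ then gives the defining polynomials $g_r$.

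First I would expand the determinant $\det\iota_{h,k}(\sum A_i\tau^i)$ as a sum over permutations $\sigma \in S_n$. A term indexed by $\sigma$ contributes $\pm\prod_i (\text{entry in row }\sigma(i), \text{column }i)$, and by the explicit form of $\iota_{h,k}$ each such entry is $\varphi^{\sigma(i)-1}(A_{c(i)})\cdot \pi^{k\cdot[\sigma(i)>i]}$ for an appropriate index $c(i) \equiv i - \sigma(i) \pmod n$; the number of factors of $\pi^k$ is exactly the number of anti-exceedances $f_\sigma = \#\{i : \sigma(i) > i\}$ — here I would reconcile with the statement's $f_j = \#\{r : r > \sigma(r)\}$, which counts exceedances of $\sigma$, equivalently anti-exceedances of $\sigma^{-1}$; since summing over $\sigma$ and over $\sigma^{-1}$ are the same, this is harmless. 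The key bookkeeping step is to match each permutation-term, after accounting for the Verschiebung shifts coming from $A_i \in \bW_{h-k}$ sitting inside $\bW_h$ (this contributes the constraint $n \mid i$ vs $n \nmid i$ in \eqref{e:sI}) and the $\pi^k = V^k\circ(\text{unit})$ factors, to a monomial $x_{j_1}^{q^{?}}\cdots x_{j_n}^{q^{?}}$ indexed by a juggling sequence $j$ with $\sigma_j = \sigma$; the juggling condition "$i + j_i$ distinct mod $n$" is precisely the statement that $i \mapsto i + j_i \bmod n$ is a permutation, which is forced because $c$ is a bijection. Tracking the total $\pi$-adic valuation of a term shows it contributes to the coordinate $D_r$ with $r = mn$ exactly when $|j| = \sum j_i$ equals $(m - (k-1)f_j)n$ (equal characteristic) resp. $r - (k-1)f_j n$ (mixed characteristic), which is the index constraint in the statement.

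For part (a), the equal-characteristics computation is clean: $\bW(A) = A[\![\pi]\!]$, so $\pi$-powers just shift coefficients and Lemma \ref{l:pwitt} is not even needed — multiplication of power series gives the coordinate $D_r$ as an honest polynomial, and the $\varphi$-fixed condition $D_r = D_r^q$ rearranges (using $D_0 = 1$ to pull out the $x_{j_n}^{q^n} - x_{j_n}$ factor from the diagonal $\varphi^{\sigma(n)-1}(A_0)$ term versus its Frobenius) into exactly $g_r = 0$. For part (b) the same skeleton applies but now Witt addition and multiplication introduce the lower-order corrections: by Lemma \ref{l:pwitt}, $(X \times_\bW Y)_r = \sum_j X_j^{q^{r-j}}Y_{r-j}^{q^j} + \epsilon_r$ and $(X +_\bW Y)_r = X_r + Y_r + \epsilon_r$, and iterating the product of $n$ Witt vectors (the determinant expansion) produces the leading juggling-sequence monomials with the Frobenius exponents $q^{m - \lfloor j_i/n\rfloor}$ exactly as written — the floor appears because the $i$-th Witt-vector factor $A_{c(i)}$ contributes its $\lfloor j_i/n\rfloor$-th coordinate, which enters the $r$-th coordinate of the product with a Frobenius twist of $q^{m - \lfloor j_i/n\rfloor}$ — and bundles everything else into $\epsilon_{nm}$, at which point the $\varphi$-fixed condition again yields $g_r + \epsilon_{nm} = 0$.

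The main obstacle is the bookkeeping in the last step of the previous paragraph: correctly tracking, through the recursive Witt-multiplication formula of Lemma \ref{l:pwitt}, which Frobenius power lands on which variable and verifying that all terms \emph{not} of juggling-sequence shape are genuinely absorbed into $\epsilon_{nm}$ (equivalently, that no "cross terms" from the $\epsilon_r$ in Lemma \ref{l:pwitt} produce a monomial with all indices in $\sI$ and total degree matching the leading stratum). I expect this to follow from a valuation/degree count: a genuine $\epsilon_r$-term has strictly smaller "weighted degree" $\sum(\lfloor j_i/n\rfloor + 1)$ than the leading terms, so it cannot collide with them, but making this precise requires setting up the weighting carefully and is where the real work lies. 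The permutation/juggling dictionary of Lemma \ref{l:juggprops}(a) and the anti-exceedance count feeding the $\pi^k$-valuation are the other place to be careful, though those are combinatorial rather than analytic.
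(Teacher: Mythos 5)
Your plan follows the paper's proof essentially step for step: expand $\det\iota_{h,k}(\sum A_i\tau^i)$ over $\sigma\in S_n$, convert each permutation term into a monomial indexed by a juggling sequence via $j_r = n i_r^* + \sigma(r)-r$, use the below-diagonal $\pi^k$'s (counted by the anti-exceedance number $f_j$) to get the constraint $|j| = (m-(k-1)f_j)n$ on the coefficient $c_m$ of $\pi^m$, impose that each $c_m$ is Frobenius-fixed, and in mixed characteristic rerun the computation through the Witt/Verschiebung multiplication rule with lower-order terms absorbed into $\epsilon_{nm}$ (your worry about collisions is largely moot, since by the paper's convention $\epsilon_{nm}$ absorbs anything supported on lower indices, which is all the recursion of Lemma \ref{l:pwitt} produces). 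The one place your mechanism is wrong is the passage from $c_m^q - c_m = 0$ to the displayed factored form of $g_r$: this does not come from ``$D_0=1$ pulling out $x_{j_n}^{q^n}-x_{j_n}$ from the diagonal $A_0$ term'' --- term by term, $(x_{j_1}x_{j_2}^q\cdots x_{j_n}^{q^{n-1}})^q - x_{j_1}x_{j_2}^q\cdots x_{j_n}^{q^{n-1}}$ does not factor that way, and $j_n$ need not even index a coordinate of $A_0$. The correct device is Lemma \ref{l:juggprops}(b): the set of juggling sequences contributing to $c_m$ is stable under cyclic rotation, and rotation preserves $|j|$, $f_j$ and $\sgn\sigma_j$, so one may re-index the untwisted copy of $c_m$ by $j\mapsto (j_n,j_1,\ldots,j_{n-1})$; after this re-indexing the two sums align and the difference becomes $\sum_j \pm\, x_{j_1}^q\cdots x_{j_{n-1}}^{q^{n-1}}(x_{j_n}^{q^n}-x_{j_n})$. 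You cite only part (a) of that lemma; part (b) is exactly what is needed here, and with that substitution your outline reproduces the paper's argument. (Minor point: $\#\{r : r>\sigma(r)\}$ counts anti-exceedances rather than exceedances, but as you note the $\sigma\leftrightarrow\sigma^{-1}$ reconciliation makes this harmless.)
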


\begin{proof}[Proof of (a)]
Let $A$ denote the matrix associated to $A_0 + A_1 \tau + \cdots + A_{n-1} \tau^{n-1}$ and let $A_{r,s}$ denote the $(r,s)$th entry of $A$. Then if we set $x_i = 0$ for $i \notin \sI$,
\begin{equation*}
A_{r,s} = 
\begin{cases}
\sum x_{ni+s-r}^{q^{r-1}} \pi^i & \text{if $r \leq s$}, \\
\sum x_{n(i-k+1)+s-r}^{q^{r-1}} \pi^{i} & \text{if $r > s$}.
\end{cases}
\end{equation*}
Let $c_m$ denote the coefficient of $\pi^m$ in
\begin{equation*}
\det A = \sum_{\sigma \in S_n} (-1)^{\sgn \sigma} \prod_{r=1}^n A_{r,\sigma(r)}.
\end{equation*}
Then
\begin{equation*}
c_m = \sum_{\sigma \in S_n} (-1)^{\sgn \sigma} \sum_{|i| = m} \prod_{k=1}^n x_{ni_r^*+\sigma(r)-r}^{q^{k-1}},
\end{equation*}
where $i = (i_1, \ldots, i_n) \in \bZ_{\geq 0}^n$ and 
\begin{equation*}
i_r^* = \begin{cases} i_r & \text{if $r \leq \sigma(r)$,} \\ i_r - n(k-1) & \text{if $r > \sigma(r)$.} \end{cases}
\end{equation*}
Then setting $j_r \colonequals n i_r^* + \sigma(r) - r$ defines a juggling sequence $j = (j_1, \ldots, j_n) \in \sI^n$ with 
\begin{equation*}
|j| = \sum_{r=1}^n j_r = \sum_{r=1}^n n i_r^* + \sigma(r) - r = nm - n(k-1)f_j.
\end{equation*}
It is clear that every juggling sequence $j \in \sI^n$ arises in this way, and we therefore have
\begin{equation*}
c_m = \sum_j (-1)^{\sgn \sigma_j} x_{j_1} x_{j_2}^q \cdots x_{j_n}^{q^{n-1}},
\end{equation*}
where the sum ranges over juggling sequences $j \in \sI^n$ with $|j| = nm - n(k-1)f_j$.

Recall that $X_h$ is defined by the equations $c_m^q - c_m$ for $1 \leq m \leq h-1$. Let $c = (12\cdots n) \in S_n$ and let $j$ be any juggling sequence with $|j| = mn.$ By Lemma \ref{l:juggprops}, $c \cdot j$ is a juggling sequence $|c \cdot j| = mn$, and $\sgn (\sigma_{c \cdot j}) = \sgn(\sigma)$. Thus we may arrange the terms in $c_m^q - c_m$ so that we obtain: 
\begin{equation*}
c_m^q - c_m 
= \sum_j (-1)^{\sgn \sigma_j} x_{j_1}^q x_{j_2}^{q^2} \cdots x_{j_{n-1}}^{q^{n-1}}(x_{j_n}^{q^n} - x_{j_n}).
\end{equation*}
Writing $r = mn$ and letting $g_r \equalscolon c_{m}^q - c_m$ completes the proof of (a). The proof of (b) follows the same computation, modulo the multiplication rule $V^i(a) V^j(b) = V^{i+j}(a^{q^j} b^{q^i}) \in \bW_h$.
\end{proof}

\begin{corollary}
$X_h$ is a variety of pure dimension $(n-1)(h-k)^+$.
\end{corollary}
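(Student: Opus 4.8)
The plan is to exhibit $X_h$ as a smooth complete intersection of the expected codimension inside the affine space $\Unip = \Affine^{(h-1)+(n-1)(h-k)^+}$. By Lemma \ref{l:Xhpolys}, $X_h$ is cut out by the $h-1$ polynomials $g_n, g_{2n}, \dots, g_{(h-1)n}$, so its expected dimension is exactly $(n-1)(h-k)^+$; the content of the corollary is that this dimension is attained on every component, which I would deduce from the Jacobian criterion.

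The heart of the argument is to isolate how the ``diagonal'' coordinate $x_{mn}$ of $A_0$ enters $g_{mn}$. I claim that $g_{mn} = (x_{mn}^{q^n} - x_{mn}) + h_m$, where $h_m$ is a polynomial in the ``off-diagonal'' coordinates $x_i$ ($n \nmid i$) together with $x_n, \dots, x_{(m-1)n}$ only; equivalently, $x_{mn}$ occurs in $g_{mn}$ solely through the summand $x_{mn}^{q^n} - x_{mn}$, and no coordinate $x_{m'n}$ with $m' > m$ occurs in $g_{mn}$ at all. To prove this I would argue as follows. A monomial of $g_{mn}$ arising from a juggling sequence $j$ involves $x_{\ell n}$ only if some coordinate of $j$ equals $\ell n$, which forces $|j| \ge \ell n$; comparing with $|j| = mn - (k-1) f_j n \le mn$ gives $\ell \le m$, and equality $\ell = m$ forces $|j| = mn$, so $j$ has $m$ balls and a coordinate equal to $mn$, whence by Lemma \ref{l:juggleshape}(a) $j$ is a cyclic permutation of $(0, \dots, 0, mn)$. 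Of these $n$ sequences, only $j = (0, \dots, 0, mn)$ contributes a nonzero monomial to $g_{mn}$: for every cyclic shift placing $mn$ before the last slot, the factor indexed by the last coordinate of $j$ is $x_0^{q^n} - x_0 = 0$ (recall $x_0 \colonequals 1$). That surviving monomial is precisely $x_{mn}^{q^n} - x_{mn}$; in the mixed-characteristic formula of Lemma \ref{l:Xhpolys}(b) the relevant exponent $p^{m - \lfloor mn/n\rfloor} = p^0$ is trivial, and the correction term $\epsilon_{nm}$ involves only variables of strictly smaller index, in particular not $x_{mn}$.

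Granting the claim, the rest is formal. Differentiating in characteristic $p$ annihilates $x_{mn}^{q^n}$, so $\partial g_{mn} / \partial x_{mn} = -1$, while $\partial g_{mn}/\partial x_{m'n} = 0$ for $m' > m$. Hence the $(h-1) \times (h-1)$ submatrix of the Jacobian matrix of $(g_n, \dots, g_{(h-1)n})$ formed by the columns for $x_n, x_{2n}, \dots, x_{(h-1)n}$ is lower triangular with $-1$'s on the diagonal, so it is invertible at every point of $\Unip$. By the Jacobian criterion $X_h$ is smooth over $\overline\FF_q$, and, being nonempty (it contains the identity $1 \in \Unip$, whose image under $\iota_{h,k}$ has determinant $1$, which is fixed by $\varphi$), each of its irreducible components has dimension $\big[(h-1) + (n-1)(h-k)^+\big] - (h-1) = (n-1)(h-k)^+$. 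In particular $X_h$ is reduced, so it is a variety of pure dimension $(n-1)(h-k)^+$, as claimed.

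The only delicate point is the bookkeeping in the middle paragraph: ruling out every juggling sequence other than $(0,\dots,0,mn)$ from contributing $x_{mn}$ to $g_{mn}$, for which the ball-count inequality together with Lemma \ref{l:juggleshape}(a) is exactly what is needed. Everything downstream is immediate. If one wanted only the dimension and not smoothness, an alternative is to combine Krull's height theorem (which gives $\dim \ge (n-1)(h-k)^+$ on each component, since $X_h$ is cut out by $h-1$ equations) with the triangular structure $g_{mn} = x_{mn}^{q^n} - x_{mn} + h_m$: the latter shows the projection of $X_h$ onto the off-diagonal coordinates has finite fibres, hence $\dim X_h \le (n-1)(h-k)^+$. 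But the Jacobian route is both shorter and yields reducedness for free.
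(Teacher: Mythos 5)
Your proof is correct, and it rests on exactly the observation the paper intends: the corollary is stated without proof as an immediate consequence of Lemma \ref{l:Xhpolys}, and the triangular structure $g_{mn} = x_{mn}^{q^n} - x_{mn} + (\text{terms in lower-index variables})$ that you verify via Lemma \ref{l:juggleshape}(a) is precisely what the paper itself invokes later (Step 2 of the proof of Proposition \ref{p:dimHom}, where the diagonal coordinates are said to be determined by the others). Your Jacobian formulation additionally gives smoothness and reducedness, which is a harmless strengthening of the same argument.
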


We will need the following computational lemmas in the proof of Theorem \ref{t:cohomdesc}.

\begin{lemma}\label{l:s(x)y coord}
Let $y = Y_0 + Y_1 \tau + \cdots + Y_{n-1} \tau^{n-1} \in H'(\overline \FF_q)$ and $s(x) = 1 + X_1 \tau + \cdots + X_{n-1} \tau^{n-1}$, where $X_i = (X_{ij})_{j=1, \ldots, h-k}$ and $X_{ij} = 0$ unless $(i,j) \in J$.
\begin{enumerate}[label=(\alph*)]
\item
In the equal characteristics case, if we write $s(x) \cdot y = \sum A_i \tau^i$, then
\begin{equation*}
A_i = Y_{ij} + \sum X_{i_1,j_1} Y_{i_1,j_1}^{q^{i_1+nj_1}},
\end{equation*}
where the sum varies over pairs $((i_1,j_1),(i_2,j_2)) \in J \times I$ such that 
\begin{equation*}
\begin{cases}
j_1 + j_2 = j & \text{if $i_1 + i_2 = i$,} \\
j_1 + j_2 = j - k & \text{if $i_1 + i_2 = i +n$.}
\end{cases}
\end{equation*}

\item
In the mixed characteristics case, if we write $s(x) \cdot y = 1 + \sum A_i \tau^i$, then
\begin{equation*}
A_i =
\begin{cases}
Y_{ij} + \sum (X_{i_1,j_1}^{q^{j_2}} Y_{i_1,j_1}^{q^{i_1 + j_1 + n j_1}})^q & \text{if $i = 0$}, \\
Y_{ij} + \sum X_{i_1,j_1}^{q^{j_2}} Y_{i_1,j_1}^{q^{i_1 + j_1 + n j_1}} & \text{if $i \neq 0$}
\end{cases}
\end{equation*}
where both sums vary over pairs $((i_1,j_1),(i_2,j_2)) \in J \times I$ such that 
\begin{equation*}
\begin{cases}
j_1 + j_2 = j & \text{if $i_1 + i_2 = i$,} \\
j_1 + j_2 = j - k & \text{if $i_1 + i_2 = i +n$.}
\end{cases}
\end{equation*}
\end{enumerate}
\end{lemma}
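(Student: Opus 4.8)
The plan is to compute the product $s(x)\cdot y$ directly in the ring $\cR_{h,k,n,q}(\overline{\FF}_q)$, reusing the bookkeeping from the proof of Lemma \ref{l:Xhpolys}. First I would write $s(x) = 1 + \sum_{i_1=1}^{n-1} X_{i_1}\tau^{i_1}$ and $y = \sum_{i_2=0}^{n-1}Y_{i_2}\tau^{i_2}$, so that
\[
s(x)\cdot y \;=\; y \;+\; \sum_{i_1=1}^{n-1} X_{i_1}\,\tau^{i_1}\,y .
\]
The first summand accounts for the ``diagonal'' term $Y_{ij}$ appearing in each coordinate $A_{ij}$. For the cross terms I would push every $\tau$ to the right using the commutation rule, which propagates from $A$ to all of $\bW$ as $\tau\cdot w = \varphi(w)\cdot\tau$ with $\varphi$ the $q$-th Frobenius endomorphism of $\bW$; this rewrites $X_{i_1}\tau^{i_1}\cdot Y_{i_2}\tau^{i_2}$ as $X_{i_1}\,\varphi^{i_1}(Y_{i_2})\,\tau^{i_1+i_2}$. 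Since $0 \le i_1, i_2 \le n-1$, the exponent $i_1+i_2$ is returned to the range $\{0,\dots,n-1\}$ by at most one use of $\tau^n = \pi^k$: either $i_1 + i_2 = i$, with no change, or $i_1+i_2 = i+n$, at the cost of an extra factor $\pi^k$. These are precisely the two alternatives in the statement, the factor $\pi^k$ being exactly what forces $j_1 + j_2 = j-k$ rather than $j_1+j_2 = j$ at the level of $\pi$-adic indices.

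Next I would substitute the coordinate expansions of $X_{i_1}$ and $Y_{i_2}$ and expand each Witt product $X_{i_1}\varphi^{i_1}(Y_{i_2})$ (respectively $\pi^k X_{i_1}\varphi^{i_1}(Y_{i_2})$). In the equal characteristics case $\bW(A) = A[\![\pi]\!]$, so these are ordinary power-series products and $\varphi^{i_1}$ just raises coordinates to the $q^{i_1}$-th power; matching the $\pi$-adic degree to the target index $j$ and the accumulated Frobenius to the target index $i$ yields the formula in (a), with the coordinates of $s(x)$ outside $J$ contributing nothing. In the mixed characteristics case $\bW(A) = \bW_K(A)$ is a genuine ring of ramified Witt vectors, and I would evaluate each such product via Lemma \ref{l:pwitt} (equivalently Lemma \ref{l:Zwitt}); the identity $V^{a}(u)V^{b}(v) = V^{a+b}(u^{q^{b}}v^{q^{a}})$ governing multiplication of $\pi$-power-shifted coordinates produces the additional Frobenius twists in the exponents, the extra overall $q$ in the $A_0$-component coming from the leading shape $W_r = \sum_i \pi^i X_i^{q^{r-i}}$ of the Witt polynomials, while everything of lower total index is absorbed into the $\epsilon$-type error. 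The constraints linking $(i_1,j_1)$, $(i_2,j_2)$ to $(i,j)$, and the restriction of the $X$-indices to $J$, are the same as in (a).

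The computation is elementary, and the only real obstacle is the bookkeeping: keeping the $\tau$-grading modulo $n$, the $\pi$-adic grading, and the accumulated Frobenius twist correctly coupled through the single relation $\tau^n = \pi^k$, and, in the mixed characteristics case, isolating the leading terms from the lower-order Witt corrections. This is exactly the bookkeeping already done in the proof of Lemma \ref{l:Xhpolys}, so that argument can be reused with essentially no change.
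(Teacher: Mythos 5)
Your plan is correct and coincides with the paper's own proof, which consists entirely of the remark that this is a straightforward computation: distributing $s(x)\cdot y$, pushing $\tau$ past Witt vectors via $\tau a = a^{q}\tau$, reducing $\tau^{i_1+i_2}$ by $\tau^{n}=\pi^{k}$ (which is exactly what produces the dichotomy $j_1+j_2=j$ versus $j_1+j_2=j-k$), and expanding the resulting Witt products with $V^{a}(u)V^{b}(v)=V^{a+b}\bigl(u^{q^{b}}v^{q^{a}}\bigr)$ is precisely that computation. The only soft spot is the Frobenius-exponent bookkeeping in the mixed-characteristic case (in particular the extra outer $q$ on the $\tau^{0}$-component), which has to be checked against the paper's coordinate conventions for $s$ and $H'$; but that is a matter of executing your outline carefully, not a difference of method.
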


\begin{proof}
This is a straightforward computation.
\end{proof}

\begin{lemma}\label{l:claim1}
Let $g_k$ be as in Lemma \ref{l:Xhpolys} and let $s(x)$ be as in Lemma \ref{l:s(x)y coord}. Suppose that for any $y, y' \in H'(\overline \FF_q)$ with $L_{q^n}(y) = L_{q^n}(y')$,
\begin{equation*}
g_r(s(x) \cdot y) = 0 \qquad \Longleftrightarrow \qquad g_r(s(x) \cdot y') = 0.
\end{equation*}
Then using the identity $L_{q^n}(y) = 1 + \sum x_i \tau^i$, we have that $g_r(s(x) \cdot y)$ is a polynomial in $x_i$ for $i \in \sI$.
\end{lemma}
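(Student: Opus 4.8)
The plan is to exhibit the Lang map on $H'$ as a Galois cover whose Galois group is a $p$-group, and then invoke Proposition~\ref{p:claim1gen}(c). First I would set up the cover: since $H'$ is a connected unipotent subgroup scheme of $\Unipk$ over $\FF_q$, it is an affine space as a variety, so $R\colonequals\overline\FF_q[H']$ is a polynomial ring over $k\colonequals\overline\FF_q$. The Lang map $L_{q^n}(y)=\Fr_{q^n}(y)y^{-1}$ is finite, flat and surjective (Lang's theorem, $H'$ connected), and it is a torsor for the right-translation action of $G\colonequals H'(\FF_{q^n})$, since $L_{q^n}(y)=L_{q^n}(y')$ precisely when $y^{-1}y'\in H'(\FF_{q^n})$. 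Pulling back the coordinate functions of the target along $L_{q^n}$ gives exactly the functions $x_i\in R$ appearing in $L_{q^n}(y)=1+\sum x_i\tau^i$; they generate the subring $S\colonequals L_{q^n}^{*}\bigl(\overline\FF_q[H']\bigr)\subset R$, again a polynomial ring. Thus $S\hookrightarrow R$ is module-finite with $\Frac R/\Frac S$ finite Galois of group $G$, and $G$ is a $p$-group, being the group of $\FF_{q^n}$-points of a unipotent $\FF_q$-group. So Proposition~\ref{p:claim1gen} applies, the $G$-action on $R$ being $\sigma\cdot f=f\circ R_\sigma$ with $R_\sigma(y)=y\sigma$.

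Next I would translate the hypothesis. Since $s(x)$ is a polynomial expression in the $x_i$ and multiplication in $\cR_{h,k,n,q}$ is given by polynomials in the coordinates, $y\mapsto s(x)\cdot y$ is a morphism $H'\to\Unipk$, so $P\colonequals g_r(s(x)\cdot y)\in R$. Two points of $H'(\overline\FF_q)$ have the same $L_{q^n}$-image exactly when they lie in one $G$-orbit, so the hypothesis says precisely that the zero set $V(P)\subset H'$ is $G$-stable. As $k$ is algebraically closed and $R$ finitely generated, $\sqrt{(P)}=I\bigl(V(P)\bigr)$ is then a $G$-stable ideal; being the radical of a principal ideal in a UFD it is itself principal, $\sqrt{(P)}=(P_0)$ with $P_0$ the squarefree part of $P$. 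Proposition~\ref{p:claim1gen}(c), applied to the $G$-stable principal ideal $(P_0)$, now gives $P_0\in S$: the squarefree part of $g_r(s(x)\cdot y)$ is a polynomial in the $x_i$, $i\in\sI$. In particular the reduced closed subscheme cut out by $g_r(s(x)\cdot y)$ — which is what enters the cohomology computations of Section~\ref{s:cohomdesc} — is defined by an equation in the $x_i$.

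The crux is to upgrade this from the squarefree part to $P$ itself. Here I would use the determinantal description $g_r=c_m^{q}-c_m$ with $r=mn$: writing $\Phi(y)=s(x)\cdot y$ and $\gamma=c_m\circ\Phi$, for $\sigma\in G\subset\UnipkF$ one has $\det\iota_{h,k}(\Phi(y)\sigma)=\det\iota_{h,k}(\Phi(y))\cdot\det\iota_{h,k}(\sigma)$ with $\det\iota_{h,k}(\sigma)$ fixed by $\varphi$ (Property~$\ddagger$), hence with Witt coordinates in $\FF_q$. Expanding the Witt product then gives $\sigma\gamma=\gamma+e_m+Q$, where $e_m\in\FF_q$ and $Q$ is a polynomial with $\FF_q$-coefficients in $c_0\circ\Phi,\dots,c_{m-1}\circ\Phi$, and using $c_i(\Phi(y))^{q}=c_i(\Phi(y))+g_{in}(\Phi(y))$ together with $(a-b)^{q}=a^{q}-b^{q}$ one obtains $\sigma P-P\in(g_n\circ\Phi,\dots,g_{(m-1)n}\circ\Phi)\,R$. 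Inducting on $m$, so that each $g_{in}\circ\Phi$ with $i<m$ already lies in $S$, reduces the question to the closed locus where those vanish; there $P$ is $G$-invariant, and combined with $P_0\in S$ and the triviality of $\Hom(G,k^{\times})$ for the $p$-group $G$ (as in the proof of Proposition~\ref{p:claim1gen}(b)) one concludes $P\in R^{G}=S$. I expect this last passage — from the $G$-stability of the \emph{zero set} of $g_r(s(x)\cdot y)$ to membership of the polynomial in $S$ — to be the only genuinely delicate point; everything preceding it is formal.
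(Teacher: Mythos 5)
Your overall strategy coincides with the paper's: relate the $y$-coordinates to the $x$-coordinates through the Lang map on $H'$, observe that the resulting inclusion of polynomial rings is finite with the fraction-field extension governed by a $p$-group, and feed this into Proposition~\ref{p:claim1gen}(c). (Two set-up remarks: the paper adjoins the coordinates one at a time through a tower of Artin--Schreier-type extensions, each Galois with group $(\FF_{q^n},+)$, rather than using $G=H'(\F)$ in one shot as you do --- either is fine; but you must include the variables $x_j$, $j\in J$, in both $R$ and $S$, with $G$ acting trivially on them, since $g_r(s(x)\cdot y)$ involves them and is not an element of $\overline\FF_q[H']$ as you wrote --- an easy repair, done in the paper.) The genuine gap is exactly where you predicted it: the passage from $G$-stability of the \emph{zero set} of $P\colonequals g_r(s(x)\cdot y)$ to $P\in S$. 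Your application of Proposition~\ref{p:claim1gen}(c) to the radical $(P_0)$ is correct and gives $P_0\in S$, and your cocycle relation $\sigma P-P=\sum_{b\ge 1}d_b(\sigma)\,(g_{(m-b)n}\circ\Phi)$ with $d_b(\sigma)\in\FF_q$ is a correct consequence of Property~$\ddagger$; but the concluding sentence does not follow from these ingredients. Invariance of $P$ on the closed locus where the lower $g$'s vanish is invariance modulo an ideal, not in $R$; the triviality of $\Hom(G,k^\times)$ addresses the multiplicative obstruction used in Proposition~\ref{p:claim1gen}(b), whereas your obstruction is additive: granting the inductive hypothesis, $\sigma\mapsto\sigma P-P$ is a homomorphism $G\to(S,+)$, and such homomorphisms abound in characteristic $p$.

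Concretely, the three facts you assemble (the zero set of $P$ is $G$-stable, the squarefree part $P_0$ lies in $S$, and $\sigma P-P\in S$ for all $\sigma$) do not imply $P\in S$. Take $q^n=2$, $R=k[y]$, $S=k[y^2+y]$, $G=\FF_2$ acting by $y\mapsto y+1$, and $P=y^2(y+1)$: the zero set $\{0,1\}$ is $G$-stable, $P_0=y(y+1)=y^2+y\in S$, and $\sigma P-P=y^2+y\in S$, yet $P\notin S$ (and indeed $(\sigma P)=(y(y+1)^2)\neq(P)$). So any completion of your argument must use more about the specific polynomial $g_r(s(x)\cdot y)$ than you have extracted --- e.g.\ that the multiplicities of its irreducible factors are constant along the $G$-orbits of those factors, or that it is reduced. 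For comparison, the paper's proof does not take your detour through the squarefree part at all: at each stage of its tower it passes directly from equality of the zero sets of $f$ and $\sigma f$ to the equality of ideals $(\sigma f)=(f)$ ``by the Nullstellensatz'' and then applies Proposition~\ref{p:claim1gen}(c) to $f$ itself; that is, the multiplicity issue you worried about is exactly the point the paper treats most briskly, and your proposed repair via the determinant relation does not close it.
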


%\begin{proof}
%For $i \in J$, define $y_i \colonequals x_i.$ Consider the rings
%\begin{equation*}
%S = \overline \FF_q[y_1, \ldots, y_{n(h-1)}], \qquad R = \overline \FF_q[x_1, \ldots, x_{n(h-1)}]
%\end{equation*}
%and their fraction fields
%\begin{equation*}
%E = \Frac S = \overline \FF_q(y_1, \ldots, y_{n(h-1)}), \qquad F = \Frac R = \overline \FF_q(x_1, \ldots, x_{n(h-1)}).
%\end{equation*}
%Then $E/F$ is a finite Galois extension with Galois group $H'(\F)$ since $y$ is chosen so that $L_{q^n}(y) = g$. The action of the Galois group on $y$ is given by right multiplication when viewed as an element of $H'(\overline \FF_q)$. By the set-up, we know that for any $\sigma \in \Gal(E/F)$,
%\begin{equation*}
%g_k(s(x) \cdot y) = 0 \quad \Longleftrightarrow \quad g_k(s(x) \cdot \sigma(y)) = 0.
%\end{equation*}
%By the Nullstellensatz, this implies that $g_k(s(x) \cdot y) = g_k(s(x) \cdot \sigma(y))$ up to units in $S$. Therefore $g_k(s(x) \cdot y)$ is a polynomial invariant under the action of $\Gal(E/F)$ and must belong to $F \cap S = R$. This completes the proof.
%\end{proof}

This is a corollary of Proposition \ref{p:claim1gen}.

\begin{proof}
For $i \in J$, define $y_i \colonequals x_i.$ Consider the rings
\begin{equation*}
R = \overline \FF_q[y_i : i \in \sI] \supset S = \overline \FF_q[x_i : i \in \sI]
\end{equation*}
and their fraction fields
\begin{equation*}
E = \Frac R = \overline \FF_q(y_i : i \in \sI) \supset F = \Frac S = \overline \FF_q(x_i : i \in \sI).
\end{equation*}
In order to apply Proposition \ref{p:claim1gen}, we need to show that $S \hookrightarrow R$ is a finite morphism. To do this, we just need to observe that for each $i \in \sI$, the polynomial $y_i$ is integral over $S$.

Notice that $E/F$ is not Galois but is the compositum of a tower of Galois extensions, where each nontrivial extension has Galois group $\FF_{q^n}$. Explicitly,
\begin{equation*}
E = E_0 \subseteq E_1 \subseteq \cdots \subseteq E_{n(h-1)} = F,
\end{equation*}
where $E_i = E_{i-1}(y_i).$ Then $E_i = \Frac S_i$ where $S_i = S_{i-1}[y_i]$.

By the set-up, we know that for each $\sigma \in \Gal(E_{n(h-1)}/E_{n(h-k)-1})$,
\begin{equation*}
g_r(s(x) \cdot y) = 0 \quad \Longleftrightarrow \quad g_r(s(x) \cdot \sigma(y)) = 0.
\end{equation*}
By the Nullstellensatz, this implies that the ideal generated by $g_r(s(x) \cdot y)$ in $S_{n(h-1)} = R$ is equal to the ideal generated by $g_r(s(x) \cdot \sigma(y))$. Thus by Proposition \ref{p:claim1gen}, we have $g_r(s(x) \cdot y) \in S_{n(h-k)-1}$. By induction, $g_r(s(x) \cdot y) \in S$.
\end{proof}

To prove Proposition \ref{p:dimHom}, we will actually need a more precise result than Lemma \ref{l:claim1}. For $* \in \bN$, let $\overline * \equiv * \pmod n$. 

\begin{lemma}\label{l:poly}
Let $a = 1 + \sum a_i \tau^i$ be as in Lemma \ref{l:s(x)y coord} and let $l \colonequals h-1,$ $m \colonequals h-k+1$, for ease of notation. Consider the descending sequence of integers $r_1 > r_2 > \cdots > r_s$ with $r_1 = mn - 1$, $r_s > n(h-k)/2$, and $n \nmid r_i$. Let $t_s(x) = x^{q^{r_s - n}} + x^{q^{r_s - 2n}} + \cdots + x^{q^{\bar r_s}}$. Set $x_{r_i} = 0 = x_{mn-r_i}$ for $i = 1, \ldots, s -1$.
\begin{enumerate}[label=(\alph*)]
\item
In the equal characteristic case,
\begin{equation*}
g_{ln}(a) = x_{ln} - x_{r_s}^{q^{n - \bar r_s}} x_{ln - r_s} + x_{r_s}^{q^n} x_{ln - r_s}^{q^{\bar r_s}} + x_{r_s}^{q^n} t_s(x_{ln - r_s})^{q^n} - x_{r_s} t_s(x_{ln - r_s}) + \delta_{ln-r_s}.
\end{equation*}

\item
In the mixed characteristic case,
\begin{align*}
g_{ln}(a) = x_{ln} &- x_{r_s}^{q^{n - \bar r_s + m - m_0}} x_{mn - r_s}^{q^{m_0+1}} + x_{r_s}^{q^{n+m-m_0}} x_{mn - r_s}^{q^{\bar r_s + m_0 + 1}} \\
&+ x_{r_s}^{q^{n+m-m_0}} t_s(x_{mn - r_s}^{q^{m_0+1}})^{q^n} - x_{r_s}^{q^{m-m_0}} t_s(x_{mn - r_s}^{q^{m_0+1}}) + \delta_{mn-r_s} + \epsilon_{mn},
\end{align*}
where $m_0 = \floor{r_s/n}$.
\end{enumerate}
\end{lemma}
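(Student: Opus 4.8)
The plan is to specialise the juggling-sequence description of the defining polynomials of $X_h$ obtained in Lemma \ref{l:Xhpolys} to the point $a = s(x) \cdot y$, and then discard all monomials that involve only small indices or that arise as Witt-vector error terms.

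First I would write $g_{ln} = c_l^q - c_l$ as in Lemma \ref{l:Xhpolys}, with $c_l$ the sum over juggling sequences $j \in \sI^n$ satisfying $|j| = ln - (k-1) f_j n$ of the monomial $(-1)^{\sgn \sigma_j} x_{j_1} x_{j_2}^q \cdots x_{j_n}^{q^{n-1}}$, substitute into it the Witt coordinates of $a = s(x) \cdot y$ given by Lemma \ref{l:s(x)y coord}, and use the Lang relation $L_{q^n}(y) = 1 + \sum x_i \tau^i$ to rewrite the outcome as a polynomial in the $x_i$, $i \in \sI$. That the last step is legitimate is Lemma \ref{l:claim1}; moreover, solving the Lang recursion for the Witt components $Y_{ij}$ of $y$ in terms of the $x_i$ is exactly where the partial Frobenius trace $t_s$ is produced, the recursion running $m_0 = \floor{r_s/n}$ steps before the relevant coordinate index drops below $\bar r_s$.

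Next I would impose the vanishing $x_{r_i} = x_{mn - r_i} = 0$ for $i = 1, \dots, s-1$, together with the vanishing already built into $s(x)$ and into $y \in H'(\overline \FF_q)$. After this specialisation the only active coordinates are $x_0 = 1$, the top coordinate $x_{ln}$ of $A_0$, and the single surviving pair recorded in the statement. A monomial of $c_l$ survives only if each of its indices $j_1, \dots, j_n$ is active, so by Lemma \ref{l:juggleshape} (in whose notation I write $e_i$) the contributing juggling sequences are, up to cyclic rotation, the one-entry sequences $(\ell' n) \cdot e_1$ or the two-entry sequences of part (b) of that lemma; the ball-count constraint $|j| = ln - (k-1)f_j n$ then cuts this to a short explicit list. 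Every other monomial of $c_l^q - c_l$ involves only indices smaller than that of the surviving pair, hence is absorbed into the $\delta$-term, and in the mixed-characteristic case the Witt-addition and -multiplication corrections, controlled by Lemmas \ref{l:pwitt} and \ref{l:Zwitt}, are all of the form $\epsilon_{mn}$.

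It remains to evaluate the finitely many surviving contributions. The sequences with $\sigma_j = \id$ contribute the $x_{ln}$-linear part of $c_l$, and, combined with the $\tau^0$-component of the Lang relation, $c_l^q - c_l$ telescopes so that only the term $+ x_{ln}$ is left (rather than $x_{ln}^{q^n} - x_{ln}$); verifying this telescoping is the first delicate point. The two-entry sequences built from the active pair give the cross terms: the associated permutation is a transposition, so the sign is $-1$ by Lemma \ref{l:juggprops}, while the Frobenius exponents are read off from the $q^{r-1}$-weights in the determinant expansion used to prove Lemma \ref{l:Xhpolys} together with the cyclic rearrangement putting $g_{ln}$ into the form $\sum_j(\cdots)(x_{j_n}^{q^n} - x_{j_n})$; applying the Lang recursion $m_0$ times converts the bare cross variable into $t_s$, whose two occurrences --- once twisted by $q^n$, once untwisted --- come from the $c_l^q$ and $-c_l$ pieces. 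Assembling these gives (a), and running the identical enumeration through $\cO_K$-Witt arithmetic, where each multiplication inserts a twist via $V^i(a) V^j(b) = V^{i+j}(a^{q^j} b^{q^i})$, yields the exponents written in terms of $m_0$ and $m = h - k + 1$ and gives (b). I expect the exponent bookkeeping to be the main obstacle: checking the telescoping that isolates $+ x_{ln}$, pinning down the precise powers of $q$ on the two cross terms and the two copies of $t_s$, and, in the mixed-characteristic case, confirming that every Witt-arithmetic remainder lands in $\epsilon_{mn}$ rather than contaminating the leading terms.
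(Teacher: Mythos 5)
Your route is the paper's route in outline: specialize the juggling-sequence formula of Lemma \ref{l:Xhpolys} at $a = s(x)\cdot y$, substitute Lemma \ref{l:s(x)y coord}, convert $y$'s into $x$'s via the Lang relation (this is indeed where $t_s$ appears, through the telescoping identity $y^{q^{r_s}} - y^{q^{\bar r_s}} = t_s(y^{q^n}-y)$, with the twisted and untwisted copies coming from the $c^q$ and $-c$ pieces), and read off signs and mixed-characteristic twists as you describe. The gap is in the step that cuts the sum down to the one- and two-entry juggling sequences. Your premise that after the specialization ``the only active coordinates are $x_0$, $x_{ln}$, and the surviving pair'' is false: setting $x_{r_i} = x_{mn-r_i} = 0$ for $i < s$ leaves alive every coordinate with index divisible by $n$ and, in general, all indices strictly between $mn-r_s$ and $r_s$. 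Hence Lemma \ref{l:juggleshape} cannot be invoked to enumerate all surviving monomials, many other juggling sequences do contribute (for instance two-entry sequences built from intermediate active indices), and your blanket assertion that ``every other monomial involves only indices smaller than that of the surviving pair'' is unproved and not literally true. The real content of the lemma is precisely the statement you assumed: that the distinguished variable $x_{mn-r_s}$ (equivalently $y_{mn-r_s}$) occurs \emph{only} in the displayed terms.

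The paper proves that precise statement rather than your stronger claim: by Lemma \ref{l:s(x)y coord}, $y_{mn-r_s}$ can only enter a coordinate $a_v$ of $s(x)\cdot y$ when $v \geq mn-r_s$, and combining this with the vanishing of the $x_{r_i}, x_{mn-r_i}$ ($i<s$) and the weight constraint $|j| = ln - (k-1)f_j n$ pins the juggling sequences producing $y_{mn-r_s}$-terms to exactly three: $ln\cdot e_n$, $r_s\cdot e_{n-\bar r_s} + (mn-r_s)\cdot e_n$, and $(mn-r_s)\cdot e_{\bar r_s} + r_s\cdot e_n$; everything not involving $y_{mn-r_s}$ is then swept into the $\delta$-term (it may involve intermediate indices, which is harmless in the application, where those terms are absorbed into $P_2$). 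Your plan needs this argument inserted in place of the ``only four active coordinates'' shortcut. A secondary bookkeeping point: the one-entry sequence $ln\cdot e_n$ is not merely the source of the $x_{ln}$-term, because $a_{ln}$, being a coordinate of the product $s(x)\cdot y$, already contains cross terms of the form $x_{r_s}y_{mn-r_s}^{q^{r_s}}$; the five displayed terms arise from combining these with the two two-entry contributions, so attributing the cross and trace terms solely to the two-entry sequences would throw off the exponents you flagged as the delicate part.
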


\begin{proof}
Since $x_{r_i} = 0$ for $i = 1, \ldots, s-1$, any juggling sequence $j = (j_1, \ldots, j_n)$ wherein $y_{mn-r_s}$ contributes to $g_{ln}$ nontrivially must have the following criteria:
\begin{enumerate}[label=\textbullet]
\item
$j_n \neq 0$

\item
The numbers $r_i$ and $mn - r_i$ for $i = 1, \ldots, s-1$ do not appear in $j$.
\end{enumerate}
Since $y_{mn - r_s}$ only appears as the coefficient of $\pi^u \tau^v$ for $nu + v \geq mn - r_s$, it follows that the terms in $g_{ln}$ involving $y_{mn - r_s}$ occur exactly in the parts corresponding to the juggling sequences
\begin{align*}
&ln \cdot e_n && \longleftrightarrow && 1 \in S_n, \\
&r_s \cdot e_{n - \bar r_s} + (mn - r_s) \cdot e_n && \longleftrightarrow && (n-\bar r_s, n) \in S_n, \\
&(mn - r_s) \cdot e_{\bar r_s} + r_s \cdot e_n && \longleftrightarrow && (\bar r_s, n) \in S_n.
\end{align*}

We now handle the equal and mixed characteristics cases separately.
\begin{enumerate}[label=(\alph*)]
\item
In the equal characteristics case, the terms involving $y_{mn-r_s}$ occur in the expression
\begin{equation*}
(a_{ln}^{q^n} - a_{ln}) - a_{r_s}^{q^{n-\bar r_s}}(a_{mn-r_s}^{q^n} - a_{mn-r_s}) - a_{mn-r_s}^{q^{\bar r_s}}(a_{r_s}^{q^n} - a_{r_s}) + \delta_{mn-r_s}.
\end{equation*}
Thus by Lemma \ref{l:s(x)y coord}(a), we see that the only terms involving $y_{mn-r_s}$ are
\begin{equation*}
((x_{r_s} y_{mn-r_s}^{q^{r_s}})^{q^n} - x_{r_s} y_{mn-r_s}^{q^{r_s}}) 
- x_{r_s}^{q^{n-\bar r_s}}(y_{mn-r_s}^{q^n} - y_{mn-r_s}) 
- y_{mn-r_s}^{q^{\bar r_s}}(x_{r_s}^{q^n} - x_{r_s}).
\end{equation*}
This reduces to the expression
\begin{align*}
-&x_{r_s}^{q^{n - \bar r_s}} x_{mn-r_s} 
- x_{r_s} (y_{mn-r_s}^{q^{r_s}} - y_{mn-r_s}^{q^{\bar r_s}}) 
+ x_{r_s}^{q^n}(y_{mn-r_s}^{q^{r_s+n}} - y_{mn-r_s}^{q^{\bar r_s}}) \\
&= -x_{r_s}^{q^{n - \bar r_s}} x_{mn-r_s} 
+ x_{r_s}^{q^n}(y_{mn-r_s}^{q^{\bar r_s + n}} - y_{mn-r_s}^{\bar r_s}) \\
&\qquad\qquad\qquad- x_{r_s}(y_{mn-r_s}^{q^{r_s}} - y_{mn-r_s}^{q^{\bar r_s}}) 
+ x_{r_s}^{q^n}(y_{mn-r_s}^{q^{r_s}} - y_{mn-r_s}^{q^{\bar r_s}})^{q^n}.
\end{align*}
Finally, (a) follows once we recall the fact
\begin{equation*}
x_{mn-r_s} = y_{mn-r_s}^{q^n} - y_{mn-r_s} + \delta_{mn-r_s}.
\end{equation*}

\item
In the mixed characteristics case, the terms involving $y_{mn-r_s}$ occur in the expression
\begin{equation*}
(a_{ln}^{q^n} - a_{ln})
- a_{r_s}^{q^{n- \bar r_s} q^{m - \floor{r_s/n}}}(a_{mn-r_s}^{q^n} - a_{mn-r_s})^{q^{m - \floor{(mn-r_s)/n}}}
- a_{mn-r_s}^{q^{\bar r_s} q^{m - \floor{(mn-r_s)/n}}} (a_{r_s}^{q^n} - a_{r_s})^{q^{m - \floor{r_s/n}}}.
\end{equation*}
Let $m_0 = \floor{r_s/n}$. Notice that $\floor{(mn - r_s)/n} = m - m_0 - 1$. By Lemma \ref{l:s(x)y coord}(b), we see that the only terms involving $y_{mn - r_s}$ are
\begin{align*}
x_{r_s}^{q^{m-m_0+n}} y_{mn - r_s}^{q^{r_s + m_0 + 1 + n}}
- x_{r_s}^{q^{m-m_0}} y_{mn - r_s}^{q^{r_s + m_0 + 1}}
- x_{r_s}^{q^{n-\bar r_s +m-m_0}}&(y_{mn-r_s}^{q^n} - y_{mn-r_s})^{q^{m_0+1}} \\
- &y_{mn - r_s}^{q^{\bar r_s + m_0 + 1}}(x_{mn - r_s}^{q^n} - x_{mn - r_s})^{q^{m-m_0}}.
\end{align*}
This reduces to the expression
\begin{align*}
-x_{r_s}^{q^{n-\bar r_s + m - m_0}}&x_{mn - r_s}^{q^{m_0+1}}
+ x_{r_s}^{q^{n + m - m_0}} x_{mn - r_s}^{q^{\bar r_s}} \\
&- x_{r_s}^{q^{m-m_0}}t(y_{mn-r_s}^{q^{m_0+1}}) + x_{r_s}^{q^{n+m-m_0}} t(x_{mn-r_s}^{q^{m_0+1}})^{q^n} + \delta_{mn - r_s}.\qedhere
\end{align*}
\end{enumerate}
\end{proof}

\section{The representations $H_c^\bullet(X_h)[\chi]$}\label{s:cohomdesc}

In this section, we prove the irreducibility of $H_c^i(X_h, \overline \QQ_\ell)[\chi]$ and its vanishing outside a single degree. The key proposition is:

\begin{proposition}\label{p:dimHom}
For any $\chi \in \cA$,
\begin{equation*}
\dim\Hom_{\UnipkF}(\rho_\chi, H_c^i(X_h, \overline \QQ_\ell)) = \delta_{i, (n-1)(h-k)^+},
\end{equation*}
where $\rho_\chi \in \cG$ is the representation described in Theorem \ref{t:irrepdesc}. Moreover, $\Fr_{q^n}$ acts on $H_c^{(n-1)(h-k)^+}(X_h, \overline \QQ_\ell)$ via multiplication by $(-1)^{(n-1)(h-k)^+} q^{n(n-1)(h-k)^+/2}.$
\end{proposition}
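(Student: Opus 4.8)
The plan is to feed the geometry of $X_h$ into the inductive machine of Section~\ref{s:alggeom}.

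\emph{Step 1: reduction to a twisted cohomology on an explicit variety.} As in \cite{B12}, $X_h$ fits the format of Proposition~\ref{p:B2.3}: one realises $X_h = L_{q^n}^{-1}(Y)$ with $G = \Unipk$, $H = H'$ the connected subgroup scheme of Section~\ref{s:reps}, $f\from H' \to Z$ the projection $\sum A_i\tau^i\mapsto A_0$, and $\chi\in\cA$ viewed as a character of $Z(\F)$, so that $\chi^\sharp = \chi\circ f$; here $Y\subset\Unipk$ is the subvariety cut out, after applying $\iota_{h,k}$, by the requirement that the determinant be $\varphi$-fixed. Proposition~\ref{p:B2.3} then gives a $\Fr_{q^n}$-equivariant isomorphism
\[
\Hom_{\UnipkF}\!\bigl(\Ind_{H'(\F)}^{\UnipkF}(\chi^\sharp),\,H_c^i(X_h,\overline\QQ_\ell)\bigr)\;\cong\;H_c^i\bigl(\beta^{-1}(Y),\,P^*\Loc_\chi\bigr).
\]
In Case~1 we have $H^+=H'$ and $\rho_\chi=\Ind_{H'(\F)}^{\UnipkF}(\chi^\sharp)$, so the left side is exactly $\Hom(\rho_\chi,H_c^i(X_h))$; in Case~2 the left side is $\bigoplus_{\widetilde\chi'}\Hom(\rho_{\chi,\widetilde\chi'},H_c^i(X_h))$, a sum over the $q^{n/2}$ extensions of $\chi^\sharp$ to $H^+(\F)$, and isolating a single summand is postponed to Step~4. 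By Lemma~\ref{l:s(x)y coord} the entries of $\beta(x,y)=s(\Fr_{q^n}x)\cdot y\cdot s(x)^{-1}$ are explicit polynomials, by Lemma~\ref{l:Xhpolys} membership in $Y$ is the vanishing of the juggling-sequence polynomials $g_{mn}$ ($1\le m\le h-1$), and by Lemma~\ref{l:claim1} these become polynomials in the coordinates $x_i$ ($i\in\sI$) on $\beta^{-1}(Y)$; thus $\beta^{-1}(Y)$ is identified with an explicit affine variety $S$ in the $x_i$ and $P^*\Loc_\chi$ with the pullback of $\Loc_\chi$ along a morphism into the centre of $Z$.

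\emph{Step 2: the induction.} We induct on $(n-1)(h-k)$, running down the symmetric pairs of indices $\{r,mn-r\}$ with $n\nmid r$. The engine is Lemma~\ref{l:poly}: isolating the variable $x_{mn-r_s}$, with $r_s$ the largest index $>n(h-k)/2$ not divisible by $n$, exhibits $S$ together with the relevant slice of $P^*\Loc_\chi$ in exactly the shape $P(x,y)=g\bigl(f(x)^{q^{j_1}}y^{q^{j_2}}-f(x)^{q^{j_3}}y^{q^{j_4}}+\alpha(x,y)^{q^n}-\alpha(x,y)\bigr)\cdot P_2(x)$ of Proposition~\ref{p:B2.10alpha}: $j_1-j_2=j_3-j_4$ is forced by the symmetry of the pair, $j_2-j_4\equiv\bar r_s\not\equiv 0\pmod n$ precisely because $n\nmid r_s$, and primitivity of $\chi$ guarantees (exactly as in the proof of Proposition~\ref{p:B2.10}) that the relevant element of $\F$ has trivial $\Gal(\F/\FF_q)$-stabiliser. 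Proposition~\ref{p:B2.10alpha} then yields $H_c^i(S,P^*\Loc_\chi)\cong H_c^{i-2}(S_3,P_3^*\Loc_\chi)(-1)$ with $\dim S_3=\dim S-2$; iterate. In Case~1 the process terminates at a point, so $H_c^i(\beta^{-1}(Y),P^*\Loc_\chi)=\delta_{i,(n-1)(h-k)}\cdot\overline\QQ_\ell$, and the $\tfrac12(n-1)(h-k)$ Tate twists multiply the $\Fr_{q^n}$-action by $q^{n(n-1)(h-k)/2}$, which (as $(n-1)(h-k)$ is even here) equals $(-1)^{(n-1)(h-k)}q^{n(n-1)(h-k)/2}$.

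\emph{Step 3: Case~2 and the sign.} When $(n-1)(h-k)$ is odd (so $n$ is even) the induction stops one step early, at an affine line carrying the pullback of an Artin--Schreier sheaf along an additive isogeny of degree $q^{n/2}$ with kernel $\FF_{q^{n/2}}$; a Gauss-sum computation shows its $H_c^\bullet$ is concentrated in degree $1$, of dimension $q^{n/2}$, with $\Fr_{q^n}$ acting by the scalar $-q^{n/2}$. Together with the $\tfrac12\bigl((n-1)(h-k)-1\bigr)$ earlier twists this gives that $\bigoplus_{\widetilde\chi'}\Hom(\rho_{\chi,\widetilde\chi'},H_c^i(X_h))$ has dimension $q^{n/2}\,\delta_{i,(n-1)(h-k)}$ with $\Fr_{q^n}$ acting by $-q^{n(n-1)(h-k)/2}$. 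Since the $q^{n/2}$ extensions $\widetilde\chi'$ form a torsor under $\widehat{H^+(\F)/H'(\F)}$ on which the $\F^\times$-conjugation action of Section~\ref{s:definitions} acts compatibly, the spaces $\Hom(\rho_{\chi,\widetilde\chi'},H_c^i(X_h))$ all have equal dimension, hence each equals $\delta_{i,(n-1)(h-k)}$, and all carry the $\Fr_{q^n}$-eigenvalue $-q^{n(n-1)(h-k)/2}=(-1)^{(n-1)(h-k)}q^{n(n-1)(h-k)/2}$. Carrying out the analogous discussion for every $\chi\in\cA$ accounts for all of $H_c^{(n-1)(h-k)^+}(X_h)$ and yields the stated action of $\Fr_{q^n}$ on the whole group.

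\emph{Main obstacle.} The delicate point is Step~2: checking that after each elimination the pair $(S,P^*\Loc_\chi)$ genuinely satisfies the hypotheses of Proposition~\ref{p:B2.10alpha}. This requires controlling the error terms $\delta_\bullet,\epsilon_\bullet$ of Lemma~\ref{l:poly} tightly enough to extract the normal form, and keeping track of which variable plays the role of $y$ at each stage of the recursion. A secondary difficulty is the Case~2 endgame — identifying the terminal curve, computing its cohomology and Frobenius eigenvalue, and justifying the symmetry that distributes the $q^{n/2}$-dimensional space equally among the $\rho_{\chi,\widetilde\chi'}$.
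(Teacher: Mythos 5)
Your proposal is correct and follows essentially the same route as the paper: the reduction via Proposition \ref{p:B2.3} with $H'$ and $\chi^\sharp$, the elimination of symmetric index pairs two at a time via Lemma \ref{l:poly} and Propositions \ref{p:B2.10}--\ref{p:B2.10alpha}, and the two endgames (a point in Case 1; an affine line carrying $x\mapsto x^{q^{n/2}}(x^{q^n}-x)$ with $H_c^1$ of dimension $q^{n/2}$ and Frobenius eigenvalue $-q^{n/2}$ in Case 2). The only cosmetic difference is your Case 2 bookkeeping: the paper simply uses that all $q^{n/2}$ extensions $\widetilde\chi$ are $\UnipkF$-conjugate, so $W_\chi=\Ind_{H'(\F)}^{\UnipkF}(\chi^\sharp)\cong\rho_\chi^{\oplus q^{n/2}}$ and the multiplicity divides out directly, rather than invoking a torsor symmetry on the set of extensions.
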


Recall that $\F^\times \ltimes \UnipF \cong \cR_{h,k,n,q}^\times(\F)$ and that $\F$ acts on $X_h$ by conjugation. For any $z \in \F$ and any $g,h \in H(\F)$, let $(z,h,g)$ denote the map $X_h \to X_h$ given by $x \mapsto z(h * x \cdot g)z^{-1}$. We prove the following proposition in Section \ref{s:pftrace}.

\begin{proposition}\label{p:vregtrace}
For any generator $\zeta$ of $\F^\times$,
\begin{equation*}
\Tr((\zeta, 1, g)^* ; H_c^{(n-1)(h-k)^+}(X_h, \overline \QQ_\ell)[\chi]) = (-1)^{(n-1)(h-k)^+} \chi(g).
\end{equation*}
\end{proposition}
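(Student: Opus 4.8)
The plan is to compute the trace of $(\zeta,1,g)^*$ on the alternating sum $\sum_i (-1)^i H_c^i(X_h)[\chi]$ via the Deligne--Lusztig fixed-point formula of \cite{DL76}, and then to combine this with the vanishing-outside-a-single-degree statement of Proposition \ref{p:dimHom} together with the explicit $\Fr_{q^n}$-action computed there to deduce the trace on the single nonzero cohomology group. More precisely, since $(\zeta,1,g)$ has finite order on $X_h$ and commutes with a suitable power of Frobenius, one writes $(\zeta,1,g) = s \cdot u$ (or directly applies the fixed-point formula for the action of the finite-order automorphism $(\zeta,1,g)$ twisted by $\Fr_{q^n}$) to get
\begin{equation*}
\sum_i (-1)^i \Tr\bigl((\zeta,1,g)^*\,;\,H_c^i(X_h,\overline\QQ_\ell)[\chi]\bigr) = \sum_{x \in X_h^{(\zeta,1,g)}} (\text{local term at }x),
\end{equation*}
where $X_h^{(\zeta,1,g)}$ is the fixed-point locus and the local terms are governed by the $\chi$-isotypic part; for a character $\chi$ pulled back from $Z(\F)$ via $f$, the contribution of each fixed point $x$ is $\chi$ evaluated on the relevant component of $x$, and by Property $\ddagger$ and the fact that the right $\UnipkF$-action is by algebraic-group right translation, $g$ enters this local term exactly through the factor $\chi(g)$.

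The key steps, in order, would be: \emph{(i)} Identify the fixed-point set of $(\zeta,1,g)$ acting on $X_h$. Because $\zeta$ is a generator of $\F^\times$, conjugation by $\zeta$ acts on the coordinates $x_i$ of $\Unipk$ (in the coordinatization of Section \ref{s:jugglingdesc}) by scaling each $x_i$ by $\zeta^{q^{?}-1}$ or a similar nontrivial character of $\F^\times$ in all but finitely many coordinates; combined with the translation by $g$, one expects the fixed-point set to be small and explicitly describable — likely a single point or a finite set on which $g$ acts freely except on a controlled piece. \emph{(ii)} Evaluate the local terms of the Deligne--Lusztig formula at these fixed points; here the local term is (up to the $\chi$-twist) a product of eigenvalue factors of $(\zeta,1,g)^*$ on the tangent/cotangent spaces, which one controls using the defining polynomials $g_r$ of $X_h$ from Lemma \ref{l:Xhpolys} and the fact that $\zeta$-conjugation acts torus-equivariantly. \emph{(iii)} Assemble: the right-hand side collapses to $(-1)^{(n-1)(h-k)^+}\chi(g)$ after accounting for signs coming from $\sgn\sigma_j$ / the parity of $(n-1)(h-k)^+$. \emph{(iv)} Invoke Proposition \ref{p:dimHom}: $H_c^i(X_h)[\chi]$ vanishes for $i \neq (n-1)(h-k)^+$, so the alternating sum equals $(-1)^{(n-1)(h-k)^+}\Tr\bigl((\zeta,1,g)^*;H_c^{(n-1)(h-k)^+}(X_h)[\chi]\bigr)$, and dividing by $(-1)^{(n-1)(h-k)^+}$ gives the claim.

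I expect the main obstacle to be step \emph{(i)}–\emph{(ii)}: pinning down the fixed-point locus of the combined map $x \mapsto \zeta(x\cdot g)\zeta^{-1}$ on the affine variety $X_h$ and verifying that the Deligne--Lusztig local contributions multiply out to exactly $\chi(g)$ with the correct sign, rather than acquiring spurious powers of $q$. One clean way to organize this is to use the inductive structure already built in Section \ref{s:alggeom}: rewrite the $\chi$-isotypic trace via Proposition \ref{p:B2.3} as a compactly-supported cohomology of $\beta^{-1}(Y)$ with coefficients in $P^*\Loc_\chi$, and then run the same dévissage as in the proof of Proposition \ref{p:dimHom} (Propositions \ref{p:B2.10} and \ref{p:B2.10alpha}) while carrying along the extra automorphism $\zeta$; at the base of the induction the space is a point and the trace is manifestly $\chi(g)$ up to the accumulated Tate twists, which match the sign $(-1)^{(n-1)(h-k)^+}$ recorded in Proposition \ref{p:dimHom}. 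The bookkeeping of how $\zeta$-conjugation interacts with the maps $f$, $P_2$, and $g$ in each inductive step is the delicate part, but it is a finite and essentially mechanical check once the setup is in place.
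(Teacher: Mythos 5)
Your overall skeleton --- the Deligne--Lusztig fixed point formula combined with the concentration statement of Proposition \ref{p:dimHom} to pass from the alternating sum to the single nonzero degree --- is exactly the paper's, and step \emph{(iv)} is correct as written. But the middle of the argument, which you yourself identify as the obstacle, is where the content lies, and the route you propose for it has a genuine gap. The formula of \cite{DL76} is not a Lefschetz formula with local terms given by tangent/cotangent eigenvalues: for a finite-order automorphism $su$ with $s$ of order prime to $p$ and $u$ a commuting $p$-element, it says $\sum_i(-1)^i\Tr((su)^*;H_c^i(X))=\sum_i(-1)^i\Tr(u^*;H_c^i(X^s))$. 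The correct application is therefore to take fixed points under the semisimple $\zeta$-conjugation \emph{alone} and keep the (unipotent) translations $(1,h,g)$ acting on that fixed locus --- not to look for fixed points of the combined map $x\mapsto\zeta(x\cdot g)\zeta^{-1}$ and decorate them with eigenvalue factors. The payoff of the correct application is that $X_h^\zeta$ is the discrete set of elements with $A_i=0$ for $i>0$ and $A_0\in 1+\bW_{h-1}(\F)$, i.e.\ a copy of $H(\F)$ on which the left $H(\F)$-action and the residual right action are left and right multiplication; hence $H_c^0(X_h^\zeta)=\bigoplus_{\chi_0}\chi_0\otimes\chi_0$ is the bi-regular representation. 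Averaging the resulting identity for $(\zeta,h,g)$ over $h\in H(\F)$ against $\chi(h)^{-1}$ --- i.e.\ inserting the projector onto the $\chi$-isotypic part, a step your sketch leaves implicit --- then gives $\sum_i(-1)^i\Tr((\zeta,1,g)^*;H_c^i(X_h,\overline\QQ_\ell)[\chi])=\chi(g)$ with no local-term analysis at all. Without that projector, a fixed-point count of the combined map cannot produce a root of unity such as $\chi(g)$ in the first place.

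Your fallback --- re-running the d\'evissage of Section \ref{s:cohomdesc} while carrying along $\zeta$ --- also cannot work as stated. Propositions \ref{p:B2.3}--\ref{p:B2.10alpha} compute the vector spaces $\Hom_{\UnipkF}(W_\chi,H_c^i(X_h,\overline\QQ_\ell))$ together with their Frobenius action; this determines multiplicities and Frobenius eigenvalues but forgets the $\UnipkF$-module structure, so no bookkeeping at the base of the induction can recover the character value at a general $g$. That is precisely why the paper proves this trace identity by the separate, global fixed-point argument above rather than inside the inductive machine.
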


From the multiplicity-one statement of Proposition \ref{p:dimHom}, the nonvanishing statement of Proposition \ref{p:vregtrace}, and a counting argument coming from Theorem \ref{t:irrepdesc}, one obtains the following theorem.

\begin{theorem}\label{t:cohomdesc}
For any $\chi \in \cA$, the $\UnipF$-representation $H_c^i(X_h, \overline \QQ_\ell)[\chi]$ is irreducible when $i = (n-1)(h-k)^+$ and vanishes otherwise. Moreover, for $\chi, \chi' \in \cA$, we have $H_c^{(n-1)(h-k)^+}(X_h, \overline \QQ_\ell)[\chi] \cong H_c^{(n-1)(h-k)^+}(X_h, \overline \QQ_\ell)[\chi']$ if and only if $\chi = \chi'$.
\end{theorem}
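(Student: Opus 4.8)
The plan is to deduce the theorem formally from Proposition~\ref{p:dimHom}, Proposition~\ref{p:vregtrace}, and Theorem~\ref{t:irrepdesc}; no new geometry should be needed. Write $r \colonequals (n-1)(h-k)^+$, and regard each $H_c^i(X_h,\overline\QQ_\ell)$ as a $\UnipkF$-representation (via the right action) commuting with the left action of the abelian group $H(\F)$. Since we have $\overline\QQ_\ell$-coefficients and finite groups, all representations in sight are semisimple, so the notions of constituent and multiplicity behave as expected.

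The first point I would record is a constraint on the eigenspaces $H_c^i(X_h)[\chi]$ for $\chi \in \cA$. By the remark in Section~\ref{s:definitions} that the left and right actions of $Z(\UnipkF)$ coincide, this subgroup acts on $H_c^i(X_h)[\chi]$ through the scalar $\chi|_{Z(\UnipkF)}$, whose $\Gal(L/K)$-stabilizer is trivial since $\chi \in \cA$; hence every irreducible $\UnipkF$-constituent of $H_c^i(X_h)[\chi]$ lies in $\cG$, so by Theorem~\ref{t:irrepdesc} it equals some $\rho_{\chi'}$ and has dimension $q^{nr/2}$. For $i \neq r$, Proposition~\ref{p:dimHom} says $\Hom_{\UnipkF}(\rho_{\chi'},H_c^i(X_h)) = 0$ for every $\chi' \in \cA$, so $H_c^i(X_h)[\chi]$ has no constituents and therefore vanishes --- this settles the vanishing assertion.

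For $i = r$ the remaining task is irreducibility. First I would note that $H_c^r(X_h)[\chi] \neq 0$: setting $g = 1$ in Proposition~\ref{p:vregtrace} gives $\Tr((\zeta,1,1)^{*};H_c^r(X_h)[\chi]) = (-1)^r \neq 0$. So $H_c^r(X_h)[\chi]$ is a nonzero direct sum of members of $\cG$, whence $\dim H_c^r(X_h)[\chi] = a_\chi\, q^{nr/2}$ with $a_\chi \geq 1$. Now I would sum over $\chi$: the space $\bigoplus_{\chi\in\cA} H_c^r(X_h)[\chi]$ is a $\UnipkF$-subrepresentation of $H_c^r(X_h)$ all of whose constituents lie in $\cG$, and by Proposition~\ref{p:dimHom} each member of $\cG$ occurs in $H_c^r(X_h)$ with multiplicity exactly $1$; hence $\sum_{\chi\in\cA} a_\chi \leq |\cG|$. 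Since also $|\cG| \leq |\cA|$ by the surjectivity in Theorem~\ref{t:irrepdesc} and $\sum_{\chi\in\cA} a_\chi \geq |\cA|$ because each $a_\chi \geq 1$, the chain $|\cA| \leq \sum_{\chi} a_\chi \leq |\cG| \leq |\cA|$ must collapse, forcing $a_\chi = 1$ for all $\chi$ (and incidentally that $\chi \mapsto \rho_\chi$ is a bijection $\cA \to \cG$). Thus $H_c^r(X_h)[\chi]$ is irreducible and lies in $\cG$. For the last claim --- only the ``only if'' direction needs an argument --- the spaces $H_c^r(X_h)[\chi]$ for distinct $\chi$ sit in distinct $H(\F)$-eigenspaces, hence span independent $\UnipkF$-subrepresentations of $H_c^r(X_h)$, so an isomorphism $H_c^r(X_h)[\chi] \cong H_c^r(X_h)[\chi']$ with $\chi \neq \chi'$ would exhibit a member of $\cG$ of multiplicity $\geq 2$ in $H_c^r(X_h)$, contradicting Proposition~\ref{p:dimHom}; hence $\chi = \chi'$.

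The serious work is entirely inside Propositions~\ref{p:dimHom} and~\ref{p:vregtrace}; granted those, this theorem is bookkeeping. The one point I would be careful about is that Proposition~\ref{p:dimHom} by itself only places $\rho_\chi$ in \emph{some} $H(\F)$-eigenspace of $H_c^r(X_h)$ restricting to $\chi$ on $Z(\UnipkF)$, not visibly in the $\chi$-eigenspace itself; the nonvanishing input from Proposition~\ref{p:vregtrace} is precisely what closes that gap and makes the counting argument go through.
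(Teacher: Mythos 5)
Your argument is correct and is essentially the paper's own (rather terse) proof, fully spelled out: the same counting argument combining the multiplicity-one statement of Proposition~\ref{p:dimHom}, the nonvanishing from Proposition~\ref{p:vregtrace}, and the surjectivity of $\chi \mapsto \rho_\chi$ from Theorem~\ref{t:irrepdesc}, with the central-character observation correctly supplying the step (implicit in the paper) that every constituent of an eigenspace $H_c^i(X_h)[\chi]$ lies in $\cG$.
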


We prove this in Section \ref{s:pfcohomdesc}. It is a trivial corollary of Theorem \ref{t:cohomdesc} that

\begin{corollary}\label{c:Gbij}
The parametrization
\begin{equation*}
\cA \to \cG, \qquad \chi \mapsto \rho_\chi
\end{equation*}
described in Theorem \ref{t:irrepdesc} is a bijection.
\end{corollary}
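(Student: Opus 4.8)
Everything of substance is already contained in Theorems~\ref{t:irrepdesc} and~\ref{t:cohomdesc}; the corollary is obtained by a one-line counting argument, by comparing two maps between the \emph{finite} sets $\cA$ and $\cG$. Both sets are finite because $\UnipkF$ is a finite group (finitely many irreducible representations) and $H(\F)$ is a finite group (finitely many characters). Surjectivity of $\chi\mapsto\rho_\chi$ is immediate: it is precisely the assertion $\cG=\{\rho_\chi:\chi\in\cA\}$ of Theorem~\ref{t:irrepdesc}, so $|\cG|\le|\cA|$. It therefore remains to produce an \emph{injection} $\cA\hookrightarrow\cG$, which forces $|\cA|=|\cG|$ and hence makes the surjection $\chi\mapsto\rho_\chi$ a bijection.

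The injection I would use is $\chi\mapsto H_c^{(n-1)(h-k)^+}(X_h,\overline\QQ_\ell)[\chi]$, taken up to isomorphism. Three points need to be verified. First, this is a $\UnipkF$-representation: $X_h$ carries commuting left $H(\F)$- and right $\UnipkF$-actions, so $H_c^\bullet(X_h,\overline\QQ_\ell)$ is an $H(\F)\times\UnipkF$-module and the $\chi$-eigenspace for the left $H(\F)$-action is a $\UnipkF$-submodule. Second, it is irreducible and $\chi\mapsto H_c^{(n-1)(h-k)^+}(X_h,\overline\QQ_\ell)[\chi]$ is injective on isomorphism classes: this is exactly Theorem~\ref{t:cohomdesc}. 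Third, the image lies in $\cG$: by the remark in Section~\ref{s:definitions}, the left action of $Z(\UnipkF)\subset H(\F)$ and the right action of $Z(\UnipkF)\subset\UnipkF$ on $X_h$ coincide, so on the $\chi$-eigenspace an element $z\in Z(\UnipkF)$ acts through the $\UnipkF$-structure by the scalar $\chi(z)$; thus the central character of $H_c^{(n-1)(h-k)^+}(X_h,\overline\QQ_\ell)[\chi]$ equals $\chi|_{Z(\UnipkF)}$, which has trivial $\Gal(L/K)$-stabilizer since $\chi\in\cA$, whence $H_c^{(n-1)(h-k)^+}(X_h,\overline\QQ_\ell)[\chi]\in\cG$. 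Granting these three points we get an injection $\cA\hookrightarrow\cG$, so $|\cA|\le|\cG|$; combined with $|\cG|\le|\cA|$ this gives $|\cA|=|\cG|<\infty$, and a surjection between finite sets of equal cardinality is a bijection.

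\textbf{Main obstacle.} Honestly there is none of real depth here — the corollary deserves its label ``trivial corollary of Theorem~\ref{t:cohomdesc}''. The only point requiring a moment's care is the third item above, namely checking that $H_c^{(n-1)(h-k)^+}(X_h,\overline\QQ_\ell)[\chi]$ genuinely lands in $\cG$, i.e.\ the compatibility of central characters, which rests on the coincidence of the two $Z(\UnipkF)$-actions. I would close with the remark that the two maps $\chi\mapsto\rho_\chi$ and $\chi\mapsto H_c^{(n-1)(h-k)^+}(X_h,\overline\QQ_\ell)[\chi]$ in fact coincide — this follows from the proof of Proposition~\ref{p:dimHom} together with the argument above — but this sharper identification is not needed for the present statement; it is used later, in Section~\ref{s:divalg}.
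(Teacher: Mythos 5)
Your proof is correct and uses exactly the ingredients the paper points to: surjectivity comes from Theorem \ref{t:irrepdesc}, and the injectivity input comes from Theorem \ref{t:cohomdesc} together with the observation (the remark at the end of Section \ref{s:definitions}) that the two $Z(\UnipkF)$-actions coincide, so that $H_c^{(n-1)(h-k)^+}(X_h,\overline\QQ_\ell)[\chi]$ lands in $\cG$. Packaging injectivity as a cardinality comparison between the finite sets $\cA$ and $\cG$, rather than through the identification $H_c^{(n-1)(h-k)^+}(X_h,\overline\QQ_\ell)[\chi]\cong\rho_\chi$, is a harmless variant of what the paper leaves implicit in calling this a trivial corollary.
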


\subsection{Proof of Proposition \ref{p:dimHom}}\label{s:pfdimHom}

Note that from Section \ref{s:reps}, the representation
\begin{equation*}
W_\chi \colonequals \Ind_{H'(\F)}^{\UnipkF}(\chi^\sharp)
\end{equation*}
is irreducible and isomorphic to $\rho_\chi$ in Case 1, and is a direct sum of $q^{n/2}$ copies of $\rho_\chi$ in Case 2. Thus the statement of the proposition is equivalent to the following two statements:
\begin{enumerate}[label=(\alph*)]
\item
If we are in Case 1, then
\begin{equation*}
\dim \Hom_{\UnipkF}(W_\chi, H_c^i(X_h, \overline \QQ_\ell)) = \delta_{i,(n-1)(h-k)^+}.
\end{equation*}

\item
If we are in Case 2, then
\begin{equation*}
\dim \Hom_{\UnipkF}(W_\chi, H_c^i(X_h, \overline \QQ_\ell)) = q^{n/2} \cdot \delta_{i,(n-1)(h-k)^+}.
\end{equation*}
\end{enumerate}

We use Proposition \ref{p:B2.3} to reduce the computation of the space $\Hom_{\UnipF}(W_\chi, H_c^i(X_h, \overline \QQ_\ell))$ to a computation of the cohomology of a certain scheme $S$ with coefficients in a certain constructible $\QQ_\ell$-sheaf $\sF$. Then, to compute $H_c^i(S, \sF)$, we inductively apply Proposition \ref{p:B2.10}, a more general version of Proposition 2.10 in \cite{B12}. This will allow us to reduce the computation to a computation involving a $0$-dimensional scheme in Case 1 and a $1$-dimensional scheme in Case 2. We will treat these cases simultaneously until the final step.

\subsubsection*{Step 0.}

We first need to establish some notation.

\begin{enumerate}[label=\textbullet]
\item 
Let $I \colonequals \{nj + i : (i,j) \in \cI\}$, where $\cI$ was defined in Equation \eqref{e:cI}. Put $I' = I \smallsetminus \{n, 2n, \ldots, n(h-1)\}$ and let $d = |I'| = \floor{(n-1)(h-1)/2}.$ Put $J \colonequals \sI \smallsetminus I$.

\item
Let $r_s$ denote the $s$th smallest number in $J$ not divisible by $n$. Put $I_0 \colonequals I'$ and $J_0 \colonequals J$. Then define $I_s \colonequals I_{s-1} \smallsetminus \{n(h-k)-r_s\}$ and $J_s \colonequals J_{s-1} \smallsetminus \{r_s\}$.

\item
Note that $I_d = \varnothing$. In Case 1, $J_d = \varnothing.$ In case 2, $J_d = \{n(h-1)/2\}$.

\item
Note that $H' = \{1 + \sum a_i \tau^i : i \in I\}$.

\item
For a finite set $A \subset \NN$, we will write $\Affine [A]$ to denote the affine space of dimension $|A|$ with coordinates labelled by $A$.

\item
For $* \in \NN$, we will denote by $[*]$ the representative of $*$ in $\{1, \ldots, n\}$ modulo $n$. We will write $\bar * \equiv * \pmod n$. Note that these only differ when $* \equiv 0 \pmod n$.
\end{enumerate}

\subsubsection*{Step 1}

We apply Proposition \ref{p:B2.3} to the following set-up:
\begin{enumerate}[label=\textbullet]
\item
the group $\Unipk$ together with the connected subgroup $H'$, both of which are defined over $\F$

\item
a morphism $s \from \Unipk/H' \to \Unipk$ defined by identifying $\Unipk/H'$ with affine space $\Affine[J]$ and setting $s \from (x_{nj+i})_{nj+i \in J} \mapsto 1 + \sum_{nj+i \in J} V^j(x_{nj+i}) \tau^i$

\item
the algebraic group morphism $f \from H' \to H$ given by $\sum A_i \tau^i \mapsto A_0$

\item
an additive character $\chi \from H(\F) \to \overline \QQ_\ell^\times$

\item
a locally closed subvariety $Y_h \subset \Unipk$ which is chosen so that $X_h = L_{q^n}^{-1}(Y_h)$
\end{enumerate}
Since $X_h$ has a right-multiplication action of $\UnipkF$, the cohomology groups $H_c^i(X_h, \overline \QQ_\ell)$ inherits a $\UnipkF$-action. For each $i \geq 0$, Proposition \ref{p:B2.3} implies that we have a vector space isomorphism
\begin{equation*}
\boxed{\Hom_{\UnipF}(W_\chi, H_c^i(X_h, \overline \QQ_\ell)) \cong H_c^i(\beta^{-1}(Y_h), P^* \Loc_\chi)}
\end{equation*}
compatible with the action of $\Fr_{q^n}$. Here, $\Loc_\chi$ is the local system on $H$ corresponding to $\chi$, the morphism $\beta \from (\Unip/H') \times H' \to \Unip$ is given by $\beta(x,g) = s(\Fr_{q^n}(x)) \cdot g \cdot s(x)^{-1}$, and the morphism $P \from \beta^{-1}(Y_h) \to H$ is the composition $\beta^{-1}(Y_h) \hookrightarrow (\Unip/H') \times H' \overset{\pr}{\longrightarrow} H' \overset{f}{\longrightarrow} H.$

We now work out an explicit description of $\beta^{-1}(Y_h) \subset \Affine[J] \times H'$. For $1 \leq k \leq n(h-1)$ and $k$ divisible by $n$, let $g_k$ be the polynomial described in Lemma \ref{l:Xhpolys}. Write $x = (x_i)_{i \in J} \in \Affine[J]$ and $g = 1 + \sum_{i \in I} x_i \tau^i \in H'(\overline \FF_q)$. Now pick $y = 1 + \sum_{i \in I} y_i \tau^i \in H'(\overline \FF_q)$ such that $L_{q^n}(y) = g$. Then
\begin{equation*}
\beta(x,g) = \Fr_{q^n}(s(x)) \cdot L_{q^n}(y) \cdot s(x)^{-1} = L_{q^n}(s(x) \cdot y).
\end{equation*}
We see that $\beta(x,g) \in Y_h$ if and only if $s(x) \cdot y \in X_h$. Let $s(x) \cdot y = 1 + \sum a_i \tau^i$. By Lemma \ref{l:Xhpolys}, we know that $s(x) \cdot y \in X_h$ if and only if $g_k(a) = 0$ for all $k \leq n(h-1)$ divisible by $n$. Recall from Lemma \ref{l:claim1} that using the identity $L_{q^n}(y) = 1 + \sum_{i \in I} x_i \tau^i$, each polynomial $g_k(a)$, which \textit{a priori} is a polynomial in $x_j$ for $j \in J$ and $y_i$ for $i \in I$, is in fact a polynomial in $x_i$ for $1 \leq i \leq n(h-1)$.

\subsubsection*{Step 2}

By Lemma \ref{l:claim1}, we know that each polynomial $g_r(s(x) \cdot y)$ can be written as a polynomial in $x_i$'s. Furthermore, they can each be written as a polynomial in $x_i$ for $i \in I_0 \cup J_0$, since $g_r(a)$ is of the form $x_r + \text{(stuff with $x_i$, $i \leq r$)}$. Thus the $in$th coordinates $x_{in}$ of $\beta^{-1}(Y_h) \subset \Affine[I \cup J]$ are uniquely determined by the other coordinates, which implies that for a subscheme $S^{(0)} \subset \Affine[I_0 \cup J_0]$,
\begin{equation*}
H_c^i(\beta^{-1}(Y_h), P^* \Loc_\chi) \cong H_c^i(S^{(0)}, (P^{(0)})^* \Loc_\chi),
\end{equation*}
where $P^{(0)} \from S^{(0)} \to Z$ is the map given by $(x_i)_{i \in I_0 \cup J_0} \mapsto (x_n, x_{2n}, \ldots, x_{(h-1)n})$.

We now apply Proposition \ref{p:B2.10} to the following set-up:
\begin{enumerate}[label=\textbullet]
\item
Let $S^{(0)} = \Affine[I_0 \cup J_0]$.

\item
Let $S_2^{(0)} = \Affine[I_1 \cup J_0]$.

\item
Note that $S^{(0)} = S_2^{(0)} \times \Affine[\{n(h-1)-1\}]$.

\item
Let $f \from S_2^{(0)} \to \GG_a$ be defined as $(x_i)_{i \in I_1 \cup J_0} \mapsto x_1$.

\item
Set $v \in S_2^{(0)}$ and $w = x_{n(h-k)-1}$. By Lemma \ref{l:poly}, in the equal characteristics case, we may write
\begin{equation*}
P^{(0)}(v,w) = g(f(v)^{q^{n-1}} w - f(v)^{q^n} w^q) \cdot P_2^{(0)}(v),
\end{equation*}
and in the mixed characteristics case, we may write
\begin{equation*}
P^{(0)}(v,w) = g(f(v)^{q^{n}} w^{q^{h-k}} - f(v)^{q^{n+1}} w^{q^h}) \cdot P_2^{(0)}(v).
\end{equation*}

\item
Let $S_3^{(0)} = \Affine[I_1 \cup J_1]$ so that this is the subscheme of $S_2^{(0)} = \Affine[I_1 \cup J_0]$ defined by $f = 0$, and let $P_3^{(0)} \colonequals P_2^{(0)}|_{S_3^{(0)}} \from S_3^{(0)} \to Z$.
\end{enumerate}
Then by Proposition \ref{p:B2.10}, for all $i \in \ZZ$,
\begin{equation*}
\boxed{H_c^i(S^{(0)},(P^{(0)})^* \Loc_\chi) \cong H_c^{i-2}(S_3^{(0)}, (P_3^{(0)})^* \Loc_\chi)(-1)}
\end{equation*}
as vector spaces equipped with an action of $\Fr_q$, where the Tate twist $(-1)$ means that the action of $\Fr_q$ on $H_c^{i-2}(S_3^{(0)}, (P_3^{(0)})^* \Loc_\chi)$ is multiplied by $q$. Note that this implies that the action of $\Fr_{q^n}$ is multiplied by $q^n$.

\subsubsection*{Step 3}

We now describe the inductive step for $l < d$. We apply Proposition \ref{p:B2.10alpha} to the following set-up:
\begin{enumerate}[label=\textbullet]
\item
Let $S^{(l)} \colonequals S_3^{(l-1)} = \Affine[I_l \cup J_l].$

\item
Let $S_2^{(l)} = \Affine[I_{l+1} \cup J_l]$.

\item
Note that $S^{(l)} = S_2^{(l)} \times \Affine[\{n(h-1)-r_l\}]$.

\item
Let $f \from S_2^{(l)} \to Z$ be defined as $(x_i)_{i \in I_{l+1} \cup J_l} \mapsto (0, \ldots, 0, x_{r_l})$.

\item
Set $v \in S_2^{(l)}$ and $w = x_{n(h-k)-r_l}$. Let $t_k(x) = x^{q^{r_l-n}} + \cdots + x^{q^{\bar r_l}}.$ By Lemma \ref{l:poly}, the morphism $P^{(l)} \colonequals P_3^{(l-1)} \from S^{(l)} \to Z$ has the following form: In the equal characteristics case,
\begin{equation*}
P^{(l)}(v,w) = g(f(v)^{n - \bar r_l} w - f(v)^{q^n} w^{q^{\bar r_l}}) \cdot P_2^{(l)}(v),
\end{equation*}
and in the mixed characteristics case,
\begin{equation*}
P^{(l)}(v,w) = g(f(v)^{q^{n- \bar r_l + h - k - m}} w^{q^{m+1}} - f(v)^{q^{n + h - k - m}} w^{\bar r_l + m + 1}) \cdot P_2^{(l)}(v).
\end{equation*}

\item
Let $S_3^{(l)} = \Affine[I_{l+1} \cup J_{l+1}]$ so that this is the subscheme of $S_2^{(l)} = \Affine[I_{l+1} \cup J_l]$ defined by $f = 0$, and let $P_3^{(l)} \colonequals P_2^{(l)}|_{S_3^{(l)}} \from S_3^{(l)} \to Z$.
\end{enumerate}
Then by Proposition \ref{p:B2.10}, for all $i \in \ZZ$,
\begin{equation*}
\boxed{H_c^i(S^{(l)}, (P^{(l)})^* \Loc_\chi) \cong H_c^{i-2}(S_3^{(l)}, (P_3^{(l)})^* \Loc_\chi)(-1).}
\end{equation*}

\subsubsection*{Step 4: Case 1}

Step 3 allows us to reduce the computation about the cohomology of $S^{(0)}$ to a computation about the cohomology of $S^{(d)} \colonequals S_3^{(d-1)}$, which is a point. Thus $\Fr_{q^n}$ acts trivially on the cohomology of $S^{(d)}$ and
\begin{equation*}
\boxed{\dim H_c^i(S^{(d)}, (P^{(d)})^*\Loc_\chi) = \delta_{0, i}.}
\end{equation*}

\subsubsection*{Step 4: Case 2}

Step 3 allows us to reduce the computation about the cohomology of $S^{(0)}$ to a computation about the cohomology of $S^{(d)} \colonequals S_3^{(d-1)} = \Affine[\{n(h-k)/2\}]$. The morphism $P^{(d)}$ is 
\begin{equation*}
P^{(d)} \from S^{(d)} \to Z, \qquad a_{n(h-k)/2} \mapsto \left(0, \ldots, 0, a_{n(h-k)/2}^{q^{n/2}}(a_{n(h-k)/2}^{q^n} - a_{n(h-k)/2})\right).
\end{equation*}
Then I claim that
\begin{equation*}
H_c^i(\GG_a, (P^{(d)})^* \Loc_\chi) = H_c^i(\GG_a, P^* \Loc_\psi),
\end{equation*}
where $\psi$ is the restriction of $\chi$ to $\F \to \overline \QQ_\ell^\times$ and $P_0$ is the morphism
\begin{equation*}
P_0 \from \GG_a \to \GG_a, \qquad x \mapsto x^{q^{n/2}}(x^{q^n} - x).
\end{equation*}

We now compute the cohomology groups $H_c^i(\GG_a, P^* \Loc_\psi)$ in the same way as in sections 6.5 and 6.6 in \cite{BW14}. We may write $P = f_1 \circ f_2$ where $f_1(x) = x^{q^{n/2}} - x$ and $f_2(x) = x^{q^{n/2}+1}$. Since $f_1$ is a group homomorphism, then $f_1^* \Loc_\psi \cong \Loc_{\psi \circ f_1}$. By assumption $\psi$ has trivial $\Gal(\F/\FF_q)$-stabilizer, so $\psi \circ f_1$ is nontrivial. Furthermore, $\psi \circ f_1$ is trivial on $\FF_{q^{n/2}}$. Thus the character $\psi \circ f_1 \from \F \to \overline \QQ_\ell^\times$ satisfies the hypotheses of Proposition 6.12 of \cite{BW14}, and thus $\Fr_{q^n}$ acts on $H_c^1(\GG_a, P_0^* \Loc_\psi)$ via multiplication by $-q^{n/2}$ and
\begin{equation*}
\dim H_c^i(\GG_a, P_0^* \Loc_\psi) = q^{n/2} \cdot \delta_{1,i}.
\end{equation*}
Thus
\begin{equation*}
\boxed{\dim H_c^i(S^{(d)}, (P^{(d)})^* \Loc_\chi) = q^{n/2} \cdot \delta_{1,i}.}
\end{equation*}

\subsubsection*{Step 5}

We now put together all of the boxed equations. We have
\begin{align*}
\Hom_{\UnipF}(W_\chi, H_c^i(X_h, \overline \QQ_\ell))
& \cong H_c^i(\beta^{-1}(Y_h), P^* \Loc_\chi) \\
&= H_c^i(S^{(0)}, (P^{(0)})^* \Loc_\chi) \\
&\cong H_c^{i-2}(S_3^{(0)}, (P_3^{(0)})^* \Loc_\chi)(-1) \\
&= H_c^{i-2}(S^{(1)}, (P^{(1)})^* \Loc_\chi)(-1) \\
&\cong H_c^{i-2d}(S^{(d)}, (P^{(d)})^* \Loc_\chi)(-d).
\end{align*}
Therefore if we are in Case 1, then
\begin{equation*}
\dim \Hom_{H(\F)}(W_\chi, H_c^i(X_h, \overline \QQ_\ell)) = \delta_{(n-1)(h-k)^+,i}.
\end{equation*}
Moreover, the Frobenius $\Fr_{q^n}$ acts on $\Hom_{\UnipF}(W_\chi, H_c^i(X_h, \overline \QQ_\ell)_)$ via multiplication by the scalar $q^{n(n-1)(h-k)^+/2}$.

If we are in Case 2, then
\begin{equation*}
\dim \Hom_{\UnipF}(W_\chi, H_c^i(X_h, \overline \QQ_\ell)) = q^{n/2} \cdot \delta_{(n-1)(h-k)^+,i}.
\end{equation*}
Moreover, the Frobenius $\Fr_{q^n}$ acts on $\Hom_{\UnipF}(W_\chi, H_c^i(X_h, \overline \QQ_\ell))$ via multiplication by the scalar $-q^{n(n-1)(h-k)^+/2}$.

Finally, observe that if we are in Case 1, then $(n-1)(h-k)^+$ is even and if we are in Case 2, then $(n-1)(h-k)^+$ is odd. This gives us a uniform way to describe the action of $\Fr_{q^n}$ and concludes the proof of Proposition \ref{p:dimHom}.

\subsection{Proof of Proposition \ref{p:vregtrace}} \label{s:pftrace}

By the Deligne--Lusztig fixed point formula,
\begin{equation*}
\sum_i (-1)^i \Tr((\zeta, h, g)^* ; H_c^i(X_h, \overline \QQ_\ell)) = \sum_i (-1)^i \Tr((1,h,g)^* ; H_c^i(X_h^\zeta, \overline \QQ_\ell)).
\end{equation*}
It is easy to calculate $X_h^\zeta$. Indeed, it can be identified with the subvariety of all elements of $\Unipk$ of the form $A_0 \in 1 + \bW_{h-1}(\F)$. Thus $X_h^{\zeta}$ is just a discrete set naturally identified with $H(\F)$ and the left and right actions of $H(\F)$ are given by left and right multiplication. Therefore $H_c^i(X_h^\zeta, \overline \QQ_\ell) = 0$ for $i > 0$ so
\begin{equation*}
\sum_i (-1)^i \Tr((1,h,g)^* ; H_c^i(X_h^\zeta, \overline \QQ_\ell)) = \Tr((1,h,g)^* ; H_c^0(X_h^\zeta, \overline \QQ_\ell)).
\end{equation*}
Furthermore, as a $(H(\F) \times H(\F))$-representation, $H_c^0(X_h^\zeta, \overline \QQ_\ell)$ is the pullback of the regular representation of $H(\F)$ along the multiplication map $H(\F) \times H(\F) \to H(\F)$. Thus
\begin{equation*}
H_c^0(X_h^\zeta, \overline \QQ_\ell) = \bigoplus_{\chi_0 \in \widehat H(\F)} \chi_0 \otimes \chi_0
\end{equation*}
as representations of $H(\F) \times H(\F)$. Therefore
\begin{equation*}
\sum_{h \in H(\F)} \chi(h)^{-1} \sum_i (-1)^i \Tr((\zeta, h, g)^* ; H_c^i(X_h, \overline \QQ_\ell)) = \chi(g) \cdot \# H(\F).
\end{equation*}
This is equivalent to
\begin{equation*}
\sum_i (-1)^i \Tr((\zeta, 1, g)^* ; H_c^i(X_h, \overline \QQ_\ell)[\chi]) = \chi(g),
\end{equation*}
and since $H_c^i(X_h, \overline \QQ_\ell)[\chi] = 0$ for $i \neq (n-1)(h-k)^+$ by Proposition \ref{p:dimHom}, the desired result follows.

\subsection{Proof of Theorem \ref{t:cohomdesc}} \label{s:pfcohomdesc}

This is a corollary of Proposition \ref{p:dimHom} and \ref{p:vregtrace}. We have
\begin{equation*}
\bigoplus_{\chi \in \cA} H_c^{(n-1)(h-k)^+}(X_h, \overline \QQ_\ell)[\chi] = \bigoplus_{\chi \in \cA} \rho_\chi,
\end{equation*}
where the summands on the left-hand side are mutually nonisomorphic and the summands on the right-hand side are irreducible. It follows then that each $H_c^{(n-1)(h-k)^+}(X_h, \overline \QQ_\ell)[\chi]$ is irreducible.

\section{Division algebras and Jacquet--Langlands transfers} \label{s:divalg}

Our goal in this final section is to understand two connections. The first, explained in Section \ref{s:DL}, is to unravel the relationship between Theorem \ref{t:cohomdesc} and the representations of division algebras arising from $p$-adic Deligne--Lusztig constructions. Because the equal characteristics claim of Theorem \ref{t:cohomdesc} proves a conjecture of Boyarchenko (see Conjecture 5.18 of \cite{B12}), we can use Proposition 5.19 of \textit{op.\ cit.} to explicitly describe this relationship. In fact, the definitions in Section \ref{s:definitions} allow us to treat all characteristics simultaneously and extend Boyarchenko's work.

The second connection, explained in Section \ref{s:LLC}, is to unravel the relationship between the representations described in Section \ref{s:DL} with respect to the local Langlands and Jacquet--Langlands correspondences. The main theorem of this section is Theorem \ref{t:divalg}, which says, colloquially, that the correspondence $\theta \mapsto H_\bullet(\widetilde X)[\theta]$ is consistent with the correspondence given by the composition of the local Langlands and Jacquet--Langlands correspondences.

\subsection{Deligne--Lusztig constructions for division algebras} \label{s:DL}

Throughout this section, $\theta \from L^\times \to \overline \QQ_\ell^\times$ will be a primitive character of level $h$ and $\chi \from U_L^1/U_L^h \to \overline \QQ_\ell^\times$ will be the induced homomorphism.

Let $\Khat$ be the completion of the maximal unramified extension of $K$ and let $\varphi$ denote the Frobenius automorphism of $\Khat$ (inducing $x \mapsto x^q$ on the residue field). We can write $D \colonequals D = L \langle \Pi \rangle/(\Pi^n - \pi^k)$, where $L \langle \Pi \rangle$ is the twisted polynomial ring defined by the commutation relation $\Pi \cdot a = \varphi(a) \cdot \Pi$. Write $\cO_D = \cO_L \langle \Pi \rangle / (\Pi^n - \pi^k)$ for the ring of integers of $D$. Define $P_D^r = \Pi^r \cO_D$ and $U_D^r = 1 + P_D^r$.

There exists a connected reductive group $\GG$ over $K$ such that $\GG(K)$ is isomorphic to $D^\times$, and a $K$-rational maximal torus $\TT \subset \GG$ such that $\TT(K)$ is isomorphic to $L^\times$. More explicitly, the homomorphism
\begin{equation*}
F \from \GL_n(\Khat) \to \GL_n(\Khat), \qquad A \mapsto \varpi^{-1} A^\varphi \varpi, \qquad \text{where $\varpi = \left(\begin{smallmatrix} 
0 & 1 & 0 & \cdots & 0 \\
0 & 0 & 1 & \cdots & 0 \\
\vdots & \ddots & \ddots & \ddots & \vdots \\
0 & \cdots & 0 & 0 & 1 \\
\pi^k & 0 & 0 & 0 & 0
\end{smallmatrix}\right)$}
\end{equation*}
is a Frobenius relative to a $K$-rational structure whose corresponding algebraic group over $K$ is $\GG$.

Let $\widetilde G \colonequals \GG(\Khat) = \GL_n(\Khat)$ and $\widetilde T \colonequals \TT(\Khat)$. Let $\BB \subset \GG \otimes_K \Khat$ be the Borel subgroup consisting of upper triangular matrices and let $\UU$ be its unipotent radical. Note that $\widetilde T$ consists of all diagonal matrices and $\widetilde U \colonequals \UU(\Khat)$ consists of unipotent upper triangular matrices. Let $\widetilde U^- \subset \GL_n(\Khat)$ denote the subgroup consisting of unipotent lower triangular matrices.

The $p$-adic Deligne--Lusztig construction $X$ for $D^\times$ described in \cite{L79} is the quotient
\begin{equation*}
X \colonequals (\widetilde U \cap F^{-1}(\widetilde U)) \backslash \{A \in \GL_n(\Khat) : F(A)A^{-1} \in \widetilde U\},
\end{equation*}
which can be identified with the set
\begin{equation*}
\widetilde X \colonequals \{A \in \GL_n(\Khat) : F(A)A^{-1} \in \widetilde U \cap F(\widetilde U^-)\}.
\end{equation*}
Recall from Section \ref{s:Xhdef} that 
\begin{equation*}
\widetilde X = \bigsqcup_{m \in \ZZ} \varprojlim_h \widetilde X_h^{(m)},
\end{equation*}
where each $X_h^{(m)}$ is a scheme of finite type over $\overline \FF_q$. Following \cite{B12}, Section 4.4, we set
\begin{equation*}
H_i(\widetilde X, \overline \QQ_\ell) = \bigoplus_{m \in \ZZ} \varinjlim_h H_i(\widetilde X_h^{(m)}, \overline \QQ_\ell),
\end{equation*}
where $H_i(S, \overline \QQ_\ell) \colonequals H_c^{2d-i}(S, \overline \QQ_\ell(d))$ for any smooth $\overline \FF_q$-scheme $S$ of pure dimension $d$. For each $i \geq 0$, $H_i(\widetilde X, \overline \QQ_\ell)$ inherits commuting smooth actions of $\GG(K) \cong D^\times$ and $\TT(K) \cong L^\times$. Given a smooth character $\theta \from L^\times \to \overline \QQ_\ell^\times$, we may consider the subspace $H_i(\widetilde X, \overline \QQ_\ell)[\theta] \subset H_i(\widetilde X, \overline \QQ_\ell)$ wherein $L^\times$ acts by $\theta$.

\begin{proposition}\label{p:divalg}
Let $U_D^{(h)} \colonequals (1 + P_L^h)(1 + P_L^{(h-k)^+}\Pi) \cdots (1 + P_L^{(h-k)^+}\Pi^{n-1})$.
\begin{enumerate}[label=(\alph*)]
\item
The representation $H_c^{(n-1)(h-k)^+}(X_h, \overline \QQ_\ell)[\chi]$ extends uniquely to a representation of the semidirect product $\cR_{h,k,n,q}^\times(\F) \cong \cO_D^\times/U_D^{(h)}$ with $\Tr(\eta_\theta^\circ(\zeta)) = (-1)^{(n-1)(h-k)^+} \theta(\zeta).$

\item
The inflation $\widetilde \eta_\theta^\circ$ of $\eta_\theta^\circ$ to $\cO_D^\times$ extends to a representation $\eta_\theta'$ of $\pi^\ZZ \cdot \cO_D^\times$ by setting $\eta_\theta'(\pi) = \theta(\pi)$. Then
\begin{equation*}
H_{(n-1)(h-k)^+}(\widetilde X, \overline \QQ_\ell)[\theta] \cong 
\eta_\theta \colonequals \Ind_{\pi^\ZZ \cdot \cO_D^\times}^{D^\times}(\eta_\theta')
\end{equation*}
and $H_i(\widetilde X, \overline \QQ_\ell)[\theta] = 0$ for $i \neq (n-1)(h-k)^+$.

\item
$H_{(n-1)(h-k)^+}(\widetilde X, \overline \QQ_\ell)[\theta]$ is an irreducible representation of dimension $n \cdot q^{n(n-1)(h-k)^+/2}$.
\end{enumerate}
\end{proposition}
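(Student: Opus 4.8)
The plan is to prove the three parts in the order (a), (b), (c). Part (a) promotes the finite-level representation $H_c^r(X_h,\overline\QQ_\ell)[\chi]$, where $r \colonequals (n-1)(h-k)^+$, to a representation of $\cR_{h,k,n,q}^\times(\F)$; part (b) assembles these into $H_\bullet(\widetilde X)[\theta]$ using Boyarchenko's analysis of the ind-scheme structure; part (c) is a Clifford-theoretic irreducibility argument together with a dimension count. For (a), I would set $V \colonequals H_c^r(X_h,\overline\QQ_\ell)[\chi]$, which by Theorem \ref{t:cohomdesc} (with Corollary \ref{c:Gbij} and Theorem \ref{t:irrepdesc}) is isomorphic to $\rho_\chi$ as a $\UnipkF$-representation, hence irreducible of dimension $q^{n(n-1)(h-k)^+/2}$. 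Since the right action of $\cR_{h,k,n,q}^\times(\F) \cong \F^\times \ltimes \UnipkF$ on $X_h$ commutes with the left $H(\F)$-action, the $\chi$-isotypic subspace $V$ inherits an $\cR_{h,k,n,q}^\times(\F)$-module structure extending its $\UnipkF$-structure. As $\cR_{h,k,n,q}^\times(\F)/\UnipkF \cong \F^\times$ is cyclic and $V|_{\UnipkF}$ is irreducible, Clifford theory shows that every extension of $V|_{\UnipkF}$ to $\cR_{h,k,n,q}^\times(\F)$ has the form $V \otimes \mu$ for a character $\mu$ of $\F^\times$, and distinct $\mu$ give non-isomorphic extensions. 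By Proposition \ref{p:vregtrace} with $g = 1$, the generator $\zeta \in \F^\times$ acts on $V$ with trace $(-1)^r$, hence on $V \otimes \mu$ with trace $(-1)^r\mu(\zeta)$; since the Teichmüller lift of $\zeta$ has order $q^n-1$ in $\cO_L^\times$, the value $\theta(\zeta)$ is a $(q^n-1)$-th root of unity, so there is a unique $\mu$ with $\mu(\zeta) = \theta(\zeta)$, and I set $\eta_\theta^\circ \colonequals V \otimes \mu$. Transporting along $\cR_{h,k,n,q}^\times(\F) \cong \cO_D^\times/U_D^{(h)}$ (Remark \ref{r:isoms}, augmented by $\cO_D^\times/U_D^1 \cong \F^\times$) yields (a).

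For (b), I would invoke the description $\widetilde X = \bigsqcup_{m\in\ZZ}\varprojlim_h \widetilde X_h^{(m)}$, in which $\widetilde X_h^{(0)}$ is a disjoint union of $q^n-1$ copies of $X_h$, the subgroup $\cO_D^\times$ preserves each $\widetilde X_h^{(m)}$, and right multiplication by $\pi$, resp.\ $\Pi$, permutes the pieces. Now that Theorem \ref{t:cohomdesc} establishes the equal-characteristics case of Conjecture 5.18 of \cite{B12}, and the definitions of Section \ref{s:definitions} provide all the required objects in mixed characteristic, the argument of \cite{B12} (especially Proposition 5.19) applies: forming $\theta$-isotypic components commutes with $\varprojlim_h$ and with $\bigsqcup_m$, the $q^n-1$ copies of $X_h$ inside $\widetilde X_h^{(0)}$ combine with part (a) to give the $\cO_D^\times$-representation $\widetilde\eta_\theta^\circ$ on the unique nonvanishing cohomology group, the $\pi$-shift forces the extension $\eta_\theta'$ with $\eta_\theta'(\pi) = \theta(\pi)$ — well-defined since $\pi$ is central in $D$ and $\pi^\ZZ\cap\cO_D^\times = \{1\}$ — and the $\Pi$-shift among the remaining pieces identifies $H_r(\widetilde X,\overline\QQ_\ell)[\theta]$ with $\Ind_{\pi^\ZZ\cdot\cO_D^\times}^{D^\times}(\eta_\theta')$. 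Vanishing of $H_i(\widetilde X,\overline\QQ_\ell)[\theta]$ for $i \neq r$ is inherited from the vanishing of $H_c^\bullet(X_h)[\chi]$ outside degree $r$. I expect this part to be the main obstacle: the delicate point is checking that Boyarchenko's comparison between $H_\bullet(\widetilde X)$ and the finite-level cohomology $H_c^\bullet(X_h)$ — the bookkeeping of the $\pi$- and $\Pi$-shifts among the $\widetilde X_h^{(m)}$ and the passage to the limit over $h$ — goes through in mixed characteristic and for arbitrary Hasse invariant $k/n$, now that its single conjectural ingredient has been supplied by Theorem \ref{t:cohomdesc}.

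For (c), the dimension is immediate: with $v$ the valuation on $D$ normalized by $v(\Pi) = 1$ (so $v(\pi) = n$) one has $[D^\times : \pi^\ZZ\cdot\cO_D^\times] = n$, whence $\dim\eta_\theta = n\cdot\dim\eta_\theta' = n\cdot q^{n(n-1)(h-k)^+/2}$ by part (a). For irreducibility I would use that $\pi^\ZZ\cdot\cO_D^\times$ is normal in $D^\times$ with cyclic quotient $\ZZ/n\ZZ$ generated by the image of $\Pi$, so Mackey's criterion reduces irreducibility of $\Ind_{\pi^\ZZ\cdot\cO_D^\times}^{D^\times}(\eta_\theta')$ to showing $(\eta_\theta')^{\Pi^j}\not\cong\eta_\theta'$ for $1\leq j\leq n-1$. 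Conjugation by $\Pi$ fixes $\pi$ and acts on $\cO_L^\times$ (hence on the residue data) by the Frobenius $\varphi$, so it sends the central character $\omega$ of $\eta_\theta^\circ$ on $Z(\UnipkF)$ to $\omega^{\varphi^j}$; since $\chi\in\cA$ — equivalently $\theta$ is primitive of level $h$ — the character $\omega$ has trivial $\Gal(L/K)$-stabilizer, so $\omega^{\varphi^j}\neq\omega$ for $1\leq j\leq n-1$, and the conjugates of $\eta_\theta'$ are pairwise non-isomorphic. This gives irreducibility and completes the proof.
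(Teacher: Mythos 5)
Your proposal is correct and follows essentially the same route as the paper: part (a) via the commuting $\cR_{h,k,n,q}^\times(\F)$-action, irreducibility of $H_c^{(n-1)(h-k)^+}(X_h,\overline\QQ_\ell)[\chi]$, and the trace formula of Proposition \ref{p:vregtrace}; part (b) by transporting Boyarchenko's analysis of the $\pi$- and $\Pi$-translates of $\iota_h(X_h)$ inside $\widetilde X_h$ now that Theorem \ref{t:cohomdesc} supplies the needed input; and part (c) by Mackey's criterion, using primitivity to see that $\Pi$-conjugation twists the relevant character data by Frobenius. The only cosmetic differences are that you phrase (a) through Clifford theory for the cyclic quotient $\F^\times$ rather than as the explicit twist $\theta^\circ\otimes H_c^{(n-1)(h-k)^+}(X_h,\overline\QQ_\ell)[\chi]$, and you detect the non-isomorphism of the $\Pi^j$-conjugates via the central character of $\UnipkF$ rather than via the restriction to $U_L^{h-1}$, both of which are equivalent.
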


\begin{proof}
This is proved in the equal characteristics case for $k/n = 1/n$ in Section 6.15 of \cite{B12}. The proof generalizes without complications, and we outline the arguments here.

The uniqueness in (a) follows from the irreducibility of $H_c^{(n-1)(h-k)^+}(X_h, \overline \QQ_\ell)[\chi]$. The representation $\widetilde \eta_\theta^\circ$ is the tensor product $\theta^\circ \otimes H_c^{(n-1)(h-k)^+}(X_h, \overline \QQ_\ell)[\chi]$ where $\theta^\circ(z,g) = \theta(z)$ for $(z,g) \in \langle \zeta \rangle \ltimes \UnipF = \cR_{h,n,q}^\times(\F)$. Finally, the trace identity is a special case of Proposition \ref{p:vregtrace}.

Let $\widetilde X_h \colonequals \sqcup_m \widetilde X_h^{(m)}$. The action of $L^\times \times D^\times$ on $\widetilde X$ induces an action of $G \colonequals (L^\times/U_L^h) \times (D^\times/U_D^{(h)})$ on $\widetilde X_h$. Moreover, $H_*(\widetilde X, \overline \QQ_\ell)[\theta] \subset H_*(\widetilde X_h, \overline \QQ_\ell)$, so it is enough to understand the cohomology of $\widetilde X_h$. By construction, it is easy to see that $\widetilde X_h$ is equal to the $G$-translates of $\iota_h(X_h) \subset \widetilde X_h^{(0)}$. One can define an action of
\begin{equation*}
\Gamma = \langle (\pi, \pi^{-1}) \rangle \cdot \langle (\zeta, \zeta^{-1}) \rangle \cdot (U_L^1/U_L^h \times U_D^1/U_D^{(h)}) \subset G
\end{equation*}
on $X_h$ so that $\iota_h$ is $\Gamma$-equivariant. Moreover, the stabilizer of $\iota_h(X_h)$ in $G$ is exactly equal to $\Gamma$. The claim in (b) then follows from an analysis of the $\theta$-eigenspace of $\Ind_\Gamma^G(H_i(X_h, \overline \QQ_\ell)).$

For any $x \in U_L^{h-1}$, we have $\eta_\theta'(x) = \psi(x)$ and
\begin{equation*}
\eta_\theta'(\Pi \cdot x \cdot \Pi^{-1}) = \eta_\theta'(\varphi(x)) = \psi(x^q).
\end{equation*}
Since $\theta$ is primitive, it follows that the normalizer of $\eta_\theta'$ in $D^\times$ is equal to $\pi^\ZZ \cdot \cO_D^\times$. Irreducibility then follows by Mackey's criterion. The dimension of the $\eta_\theta$ is equal to the product of the index $[D^\times : \pi^\ZZ \cdot \cO_D^\times] = n$ and the dimension of $\eta_\theta'$, so the desired result holds by Theorem \ref{t:irrepdesc}.
\end{proof}

\begin{remark}
Note that if $h \leq k$, then $\cO_D^\times/U_D^{(h)} = \cO_L^\times/U_L^h$, so the character $\theta \from L^\times \to \QQ_\ell^\times$ can be viewed as a one-dimensional representation of $\pi^\ZZ \cdot \cO_D^\times$. By Proposition \ref{p:divalg},
\begin{flalign*}
&&H_0(\widetilde X, \overline \QQ_\ell)[\theta] \cong \Ind_{\pi^\ZZ \cdot \cO_D^\times}^{D^\times}(\theta). &&\Diamond
\end{flalign*} 
\end{remark}

\subsection{Local Langlands correspondences} \label{s:LLC}

Fix a character $\epsilon$ of $K^\times$ whose kernel is equal to the image of the norm $\Norm_{L/K} \from L^\times \to K^\times$. Then the representation
\begin{equation*}
\sigma_\theta \colonequals \Ind_{\cW_L}^{\cW_K}(\theta \circ \rec_L)
\end{equation*}
is a smooth irreducible $n$-dimensional representation of $\cW_K$. Let $\sX$ denote the set of all characters of $L^\times$ that have trivial stabilizer in $\Gal(L/K)$ and let $\cG_K^\epsilon(n)$ denote the set of (isomorphism classes of) smooth irreducible $n$-dimensional representations $\sigma$ of $\Weil_K$ that satisfy $\sigma \cong \sigma \otimes (\epsilon \circ \rec_F).$ Let $\xi$ be the character of $L^\times$ determined by $\xi(\pi) = (-1)^{n-1}$ and $\xi|_{\cO_L^\times} = 1$. Then
\begin{center}
\begin{tikzpicture}[xscale=4,yscale=0.65]
\draw (0,0) node(a){$\sX/\Gal(L/K)$} (1,0) node(b){$\cG_K^\epsilon(n)$};
\draw[->] (a.east) to node[above]{LCFT} (b.west);
\draw (0,-1) node(a'){$\theta$} (1,-1) node(b'){$\sigma_{\xi\theta}$};
\draw[|->] (a'.east) to (b'.west);
\end{tikzpicture}
\end{center}
is a bijection. This is a twisted version of Lemma 1.1 of \cite{BW13}. (Here, LCFT stands for local class field theory.)

Now let $\cA_K^\epsilon(n)$ denote the set of (isomorphism classes of) irreducible supercuspidal representations $\pi$ of $\GL_n(K)$ such that $\pi \cong \pi \otimes (\epsilon \circ \det)$. There exists a canonical bijection
\begin{center}
\begin{tikzpicture}[xscale=4,yscale=0.65]
\draw (0,0) node(a){$\cG_K^\epsilon(n)$} (1,0) node(b){$\cA_K^\epsilon(n)$};
\draw[->] (a.east) to node[above]{\text{LLC}} (b.west);
\draw (0,-1) node(a'){$\sigma_{\xi\theta}$} (1,-1) node(b'){$\pi_\theta$};
\draw[|->] (a'.east) to (b'.west);
\end{tikzpicture}
\end{center}
known as the local Langlands correspondence.

Finally, let $\cA'{}_{\!\!K}^\epsilon(n)$ denote the set of (isomorphism classes of) irreducible representations $\rho$ of $D^\times$ such that $\rho \cong \rho \otimes (\epsilon \circ \Nrd_{D/K})$. Then the Jacquet--Langlands correspondence gives a bijection
\begin{center}
\begin{tikzpicture}[xscale=4,yscale=0.65]
\draw (0,0) node(a){$\cA_K^\epsilon(n)$} (1,0) node(b){$\cA'{}_{\!\!K}^\epsilon(n)$};
\draw[->] (a.east) to node[above]{\text{JLC}} (b.west);
\draw (0,-1) node(a'){$\pi_\theta$} (1,-1) node(b'){$\rho_\theta$};
\draw[|->] (a'.east) to (b'.west);
\end{tikzpicture}
\end{center}

\begin{remark}\label{r:epsiloninvar}
Since $L/K$ is unramified, the restriction of $\epsilon$ to $\OH_K^\times$ is trivial, and thus the composition $\epsilon \circ \Nrd_{D/K}$ is trivial on $E^\times \cdot \OH_D^\times \supset \pi^\ZZ \cdot \OH_D^\times$. Thus by the construction of $\eta_\theta$, we have that $\eta_\theta$ is invariant under twisting by $\epsilon \circ \Nrd_{D/K}$. \hfill $\Diamond$
\end{remark}

Our work describes a correspondence between $L^\times$-characters and $D^\times$-representations given by
\begin{center}
\begin{tikzpicture}[xscale=8]
\draw (0,0) node(a){\{primitive characters of $L^\times$\}} (1,0) node(b){\{irreducible representations of $D^\times$\}};
\draw[->] (a.east) to node[above]{\text{$p$-adic DL}} (b.west);
\draw (0,-1) node(a'){$\theta$} (1,-1) node(b'){$\eta_\theta \colonequals H_\bullet(\widetilde X, \overline \QQ_\ell)[\theta]$};
\draw[|->] (a'.east) to (b'.west);
\end{tikzpicture}
\end{center}
By Remark \ref{r:epsiloninvar}, we see that $\eta_\theta \in \cA'{}_{\!\!K}^\epsilon(n)$. In Theorem \ref{t:divalg}, we prove that this correspondence matches the composition of the previous three, therefore giving a geometric realization of the Jacquet--Langlands correspondence.
%A natural question, then, is to ask whether this correspondence matches the composition of the previous three. That is,
%\begin{equation}\label{e:commute}
%\text{$p$-adic DL} \overset{?}{=} \text{JLC} \circ \text{LLC} \circ \text{LCFT}.
%\end{equation}
%It turns out that the answer to Equation \eqref{e:commute} is affirmative! We prove this in Theorem \ref{t:divalg}.

\begin{remark}
The construction of the local Langlands and Jacquet--Langlands correspondences was already known. See, for example, \cite{H93}. Recent work of Boyarchenko and Weinstein (see \cite{BW13}) gives a partially geometric construction of these correspondences using the representations $H_c^{n-1}(X_2, \overline \QQ_\ell)[\psi]$ of $U_2^{n,q}(\FF_{q^n})$. Note that in \cite{BW14} and \cite{BW13}, the scheme $X_2$ is denoted by $X$ and the group $U_2^{n,q}(\F)$ is denoted by $U^{n,q}(\F)$. The following theorem shows that Deligne--Lusztig constructions for division algebras give a geometric realization of the Jacquet--Langlands correspondence. \hfill $\Diamond$
\end{remark}

\begin{theorem}\label{t:divalg}
Let $\theta \from L^\times \to \overline \QQ_\ell^\times$ be a primitive character of level $h$ and let $\rho_\theta$ be the $D^\times$-representation corresponding to $\theta$ under the local Langlands and Jacquet--Langlands correspondences. Then $H_i(\widetilde X, \overline \QQ_\ell)[\theta] = 0$ if $i \neq (n-1)(h-k)^+$ and 
\begin{equation*}
H_{(n-1)(h-k)^+}(\widetilde X, \overline \QQ_\ell)[\theta] \cong \rho_\theta.
\end{equation*}
\end{theorem}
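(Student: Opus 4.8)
The vanishing of $H_i(\widetilde X, \overline\QQ_\ell)[\theta]$ for $i \neq (n-1)(h-k)^+$, and the isomorphism $H_{(n-1)(h-k)^+}(\widetilde X, \overline\QQ_\ell)[\theta] \cong \eta_\theta$ with $\eta_\theta = \Ind_{\pi^\ZZ\cdot\cO_D^\times}^{D^\times}(\eta_\theta')$, are already established in Proposition~\ref{p:divalg}(b). So the content of the theorem is the assertion that $\eta_\theta$ is isomorphic to the representation $\rho_\theta$ attached to $\theta$ by the composite of local class field theory, the local Langlands correspondence, and the Jacquet--Langlands correspondence described in Section~\ref{s:LLC}. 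My plan is to prove this by verifying the hypotheses of Henniart's characterization of the Jacquet--Langlands transfer, in the form recorded as Proposition~1.5(b) of \cite{BW13}: it suffices to check that the family $\theta \mapsto \eta_\theta$ (indexed by primitive characters of $L^\times$) is equivariant under twisting by characters of $K^\times$, has the expected central character, and satisfies a prescribed character identity on a regular elliptic element of $D^\times$ that generates $L$.

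The twisting equivariance should follow from the construction: replacing $\theta$ by $\theta\cdot(\phi\circ\Norm_{L/K})$ for a character $\phi$ of $K^\times$ leaves the ``defect'' $\theta/\theta^\gamma$ unchanged, hence keeps $\theta$ primitive, and (since $\Norm_{L/K}$ and $\Nrd_{D/K}$ are compatible with the $L^\times$-action and $D^\times$-action on $\widetilde X$) twists $\widetilde\eta_\theta^\circ$, $\eta_\theta'$ and $\eta_\theta$ by $\phi\circ\Nrd_{D/K}$; the special case $\phi = \epsilon$ is Remark~\ref{r:epsiloninvar}, and in particular puts $\eta_\theta$ in $\cA'{}^{\epsilon}_{K}(n)$. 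For the central character, the central character of $\eta_\theta$ is $\eta_\theta'|_{K^\times}$, which by definition of $\eta_\theta'$ is determined by $\chi$ on $\cO_L^\times \cap K^\times = \cO_K^\times$ and by the normalization $\eta_\theta'(\pi) = \theta(\pi)$ at the uniformizer; comparing with the central character prescribed by $\det\sigma_{\xi\theta}$ is then a direct local-class-field-theory computation, the twist $\xi$ (with $\xi(\pi) = (-1)^{n-1}$ and $\xi|_{\cO_L^\times} = 1$) being exactly what is needed to reconcile the values at $\pi$.

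The substantive step is the character identity. I would take $\zeta$ a generator of $\FF_{q^n}^\times$, regarded as a root of unity in $\cO_L^\times \subset \cO_D^\times$, which is a regular elliptic element of $D^\times$. Taking $1, \Pi, \ldots, \Pi^{n-1}$ as coset representatives for $D^\times/\pi^\ZZ\cdot\cO_D^\times$ and noting that conjugation by $\Pi^j$ sends $\zeta$ into $\cO_L^\times \subset \cO_D^\times$, the character formula for the induced representation gives
\begin{equation*}
\Tr\eta_\theta(\zeta) = \sum_{j=0}^{n-1}\Tr\widetilde\eta_\theta^\circ(\varphi^j\zeta) = (-1)^{(n-1)(h-k)^+}\sum_{\gamma\in\Gal(L/K)}\theta^\gamma(\zeta),
\end{equation*}
where the second equality applies Proposition~\ref{p:divalg}(a) (equivalently Proposition~\ref{p:vregtrace}) to each of the generators $\varphi^j\zeta$. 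On the Galois side, $\sigma_{\xi\theta} = \Ind_{\Weil_L}^{\Weil_K}((\xi\theta)\circ\rec_L)$ has an entirely parallel induced-character expansion; transporting it through the local Langlands correspondence (which matches the character of $\pi_\theta$ with that of $\sigma_{\xi\theta}$ on regular elliptic classes) and through Jacquet--Langlands (which contributes the sign $(-1)^{n-1}$) produces the value of the character of $\rho_\theta$ at $\zeta$, and since $\xi$ is trivial on $\cO_L^\times$ the two expressions agree once the signs are matched. Henniart's criterion then yields $\eta_\theta \cong \rho_\theta$, and combined with Proposition~\ref{p:divalg}(b) this proves the theorem; as a consistency check, both sides have dimension $n\cdot q^{n(n-1)(h-k)^+/2}$ (Proposition~\ref{p:divalg}(c)).

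The main obstacle will be the bookkeeping of normalizations in the last step: identifying exactly which regular elliptic class and which form of the character identity appear in the statement of Henniart's criterion, tracking the $\xi$-twist so that $\theta \mapsto \sigma_{\xi\theta}$ is literally the parametrization through which local Langlands is phrased here, and --- most delicately --- reconciling the geometric sign $(-1)^{(n-1)(h-k)^+}$ (which simultaneously records the single cohomological degree of concentration and the degree shift between $H_\bullet$ and $H_c^\bullet$) with the sign $(-1)^{n-1}$ intrinsic to the Jacquet--Langlands transfer. Throughout, primitivity of $\theta$ is essential: it is exactly what forces the normalizer of $\eta_\theta'$ in $D^\times$ to equal $\pi^\ZZ\cdot\cO_D^\times$, which is what makes $\eta_\theta$ irreducible and validates the induced-character computation above.
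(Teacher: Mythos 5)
Your proposal follows essentially the same route as the paper: reduce to Proposition \ref{p:divalg}(b), then identify $\eta_\theta$ with $\rho_\theta$ by verifying the two hypotheses of Proposition 1.5(b) of \cite{BW13} --- invariance under twisting by $\epsilon\circ\Nrd_{D/K}$ (immediate from unramifiedness of $L/K$) and the character identity on very regular elements of $\cO_L^\times$, computed exactly as you do via the Frobenius formula for the induced representation and Proposition \ref{p:vregtrace}. The only divergence is that the criterion as used in the paper allows an unspecified constant $c$ in the trace identity and asks for nothing more, so the central-character computation and the reconciliation of $(-1)^{(n-1)(h-k)^+}$ with the Jacquet--Langlands sign that you flag as the main obstacle are not actually needed; just note that the identity must be checked at every very regular unit (i.e.\ $\zeta u$ with $u\in U_L^1$), not only at the Teichm\"uller generator, which Proposition \ref{p:vregtrace} supplies.
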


\begin{proof}
By Proposition 1.5(b) of \cite{BW13}, we just need to show that $\eta_\theta \colonequals H_{(n-1)(h-k)^+}(\widetilde X, \overline \QQ_\ell)[\theta]$ satisfies the following two properties:
\begin{enumerate}[label=(\roman*)]
\item
For any character $\epsilon$ of $K^\times$ whose kernel is equal to the image of the norm map $\Norm_{L/K} \from L^\times \to K^\times$, we have $\eta_\theta \cong \eta_\theta \otimes (\epsilon \circ \Nrd_{D/K})$.

\item
There exists a constant $c$ such that $\tr \eta_\theta(x) = c \cdot \sum_{\gamma \in \Gal(L/K)} \theta^\gamma(x)$ for each very regular element $x \in \OH_L^\times$.
\end{enumerate}

Since $L/K$ is unramified, the restriction of $\epsilon$ to $\cO_K^\times$ is trivial, and thus the composition $\epsilon \circ \Nrd_{D/K}$ is trivial on $L^\times \cdot \cO_D^\times \supset \pi^\ZZ \cdot \cO_D^\times$. Thus by construction, $\eta_\theta$ is invariant under twisting by $\eta \circ \Nrd_{D/K}$. This proves (i).

We now prove (ii). By the construction of $\eta_\theta$, since $\pi^\ZZ \cdot \OH_D = L^\times \cdot U_D^1$, we have
\begin{equation*}
\tr \eta_\theta(x) = \sum_{\substack{g \in D^\times/L^\times \cdot U_D^1 \\ gxg^{-1} \in L^\times \cdot U_D^1}} \tr \eta_\theta'(gxg^{-1}).
\end{equation*}
Now let $x \in \OH_L^\times$ be very regular. By Proposition \ref{p:vregtrace}, $\eta_\theta^\circ(x) = (-1)^{(n-1)(h-k)^+} \theta(x)$. By Lemma 5.1(b) of \cite{BW13}, if $g \in D^\times$ is such that $gxg^{-1} \in L^\times \cdot U_D^1$, then $g \in N_{D^\times}(L^\times) \cdot U_D^1$, where $N_D^\times(L^\times)$ is the normalizer of $L^\times$ in $D^\times$. Therefore
\begin{align*}
\tr \eta_\theta(x) 
&= \sum_{g \in N_{D^\times}(L^\times) \cdot U_D^1/L^\times \cdot U_D^1} \tr \eta_\theta'(gxg^{-1}) = \sum_g \tr(\eta_\theta^\circ(gxg^{-1})) \\
&= \sum_g (-1)^{(n-1)(h-k)^+} \theta(gxg^{-1}) = (-1)^{(n-1)(h-k)^+} \cdot \sum_{\gamma \in \Gal(L/K)} \theta^\gamma(x). \qedhere
\end{align*}
\end{proof}

\begin{corollary}\label{c:JL}
Let $D$ and $D'$ be division algebras of rank $n$ and let $X$ and $X'$ be their corresponding Deligne--Lusztig constructions. For any primitive character $\theta \from L \to \overline \QQ_\ell^\times$, the Jacquet--Langlands transfer of $H_{(n-1)(h-k)^+}(X)[\theta]$ is isomorphic to $H_{(n-1)(h-k')}(X)[\theta]$, where $k/n$ and $k'/n$ are the Hasse invariants of $D$ and $D'$.
\end{corollary}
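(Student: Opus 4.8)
The plan is to deduce the corollary directly from Theorem \ref{t:divalg}, which carries all of the content. First I would recall that the level $h$ of a primitive character $\theta \from L^\times \to \overline \QQ_\ell^\times$ is an intrinsic invariant of $\theta$, defined purely in terms of $\theta$ and its $\Gal(L/K)$-conjugates with no reference to any division algebra, and that the supercuspidal representation $\pi_\theta \in \cA_K(n)$ of $\GL_n(K)$ attached to $\theta$ via $\theta \mapsto \sigma_{\xi\theta} \mapsto \pi_\theta$ likewise depends only on $\theta$. Thus the Hasse invariant $k/n$ enters only through the final Jacquet--Langlands step and through the cohomological degree $(n-1)(h-k)^+$.

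Then, writing $D = D_{k/n}$ and $D' = D_{k'/n}$ with associated Deligne--Lusztig constructions $X$ and $X'$ and setting $r = (n-1)(h-k)^+$, $r' = (n-1)(h-k')^+$, I would apply Theorem \ref{t:divalg} twice: to $D$ it gives $H_i(X)[\theta] = 0$ for $i \neq r$ and $H_r(X)[\theta] \cong \rho_\theta = \mathrm{JL}_D(\pi_\theta)$, the image of $\pi_\theta$ under the Jacquet--Langlands correspondence $\mathrm{JL}_D \from \cA_K(n) \to \cA_K'(n)$ for $D$; to $D'$ it gives $H_i(X')[\theta] = 0$ for $i \neq r'$ and $H_{r'}(X')[\theta] \cong \rho_\theta' = \mathrm{JL}_{D'}(\pi_\theta)$. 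By definition the Jacquet--Langlands transfer from $D^\times$-representations to $(D')^\times$-representations is the composite $\mathrm{JL}_{D'} \circ \mathrm{JL}_D^{-1}$, which sends $\rho_\theta$ to $\rho_\theta'$; hence $H_{r'}(X')[\theta]$ is exactly the Jacquet--Langlands transfer of $H_r(X)[\theta]$, which is the assertion of the corollary (with the evident correction $(n-1)(h-k')^+$ in the displayed subscript).

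I do not expect any real obstacle: this is a bookkeeping consequence of Theorem \ref{t:divalg}, whose proof (via Proposition \ref{p:divalg}, Proposition \ref{p:vregtrace}, Theorem \ref{t:cohomdesc}, and Henniart's criterion in the form of Proposition 1.5(b) of \cite{BW13}) is where the work lies. The only two points I would take care to spell out are that $h$ and $\pi_\theta$ are independent of the choice of division algebra --- so the two homology groups are governed by a common $\GL_n(K)$-parameter --- and that ``Jacquet--Langlands transfer between $D$ and $D'$'' is by convention the composite through $\GL_n(K)$, so that matching both sides with $\pi_\theta$ automatically matches them with each other.
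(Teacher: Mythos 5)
Your proposal is correct and is exactly the argument the paper intends: the corollary is left as an immediate consequence of Theorem \ref{t:divalg} applied to $D$ and $D'$ separately, using that $\pi_\theta$ depends only on $\theta$ and that the transfer between $D^\times$ and $(D')^\times$ is by definition the composite through $\GL_n(K)$. You also rightly flag the typographical slips in the statement (the subscript should read $(n-1)(h-k')^+$ and the right-hand side should be $H_{(n-1)(h-k')^+}(X')[\theta]$).
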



\begin{thebibliography}{WW99}
\bibitem[B12]{B12}
Boyarchenko, M. \textit{Deligne--Lusztig constructions for unipotent and $p$-adic groups.} Preprint, 2012. arXiv:1207.5876.

\bibitem[BW13]{BW13} Boyarchenko, M. and Weinstein, J. \textit{Geometric realization of special cases of local Langlands and Jacquet--Langlands correspondences.} Preprint, arXiv:1303.5795, 2013.

\bibitem[BW14]{BW14} Boyarchenko, M. and Weinstein, J. \textit{Maximal varieties and the local Langlands correspondence for $\GL_n$.} To appear in \textit{Journal of the AMS,} 2014. arXiv:1109.3522v3.

\bibitem[C14]{C14} Chan, C. \textit{Deligne--Lusztig constructions for division algebras and the local Langlands correspondence}. Preprint, arXiv:1406.6122, 2014.

\bibitem[C74]{C74} Corwin, L.  \textit{Representations of division algebras over local fields.} {Advances in Mathematics,} 13, 259-267, 1974.

%\bibitem[C89]{C89} Corwin, L. \textit{Characters of Representations of $D_n^*$ (tamely ramified case).} {Contemporary Math.} Vol.\ 86, 1989.

%\bibitem[CH77]{CH77} Corwin, L. and Howe, R. \textit{Computing Characters of Tamely Ramified $p$-adic Division Algebras.} {Pacific Journal of Mathematics,} Vol. 73, No. 2, 1977.

\bibitem[DL76]{DL76} Deligne, P. and Lusztig, G. \textit{Representations of reductive groups over finite fields.} {Ann.\ of Math.\ (2),} 103(1):103-161, 1976.

\bibitem[FF13]{FF13} Fargues, L. and Fontaine, J-M. \textit{Courbes et fibr\'es vectoriels en th\'eorie de Hodge $p$-adique}. Preprint, 2013. {http://webusers.imj-prg.fr/$\sim$laurent.fargues/Prepublications.html}.

\bibitem[H93]{H93} Henniart, G. \textit{Correspondance de Jacuqet--Langlands explicite. I. Le case mod\'er\'e de degr\'e premier.} In {S\'eminaire de Th\'eorie des Nombres,} Paris, 1990-91, volume 108 of Prog.\ Math., pages 85-114. 1993.

%\bibitem[I13]{I13} Ivanov, A. \textit{Cohomology of affine Deligne--Lusztig varieties for $\GL_2$}. {J.\ of Algebra 383}, 42-62, 2013.

\bibitem[I15]{I15} Ivanov, A. \textit{Affine Deligne--Lusztig varieties of higher level and the local Langlands correspondence for $\GL_2$}. Preprint, 2015. arXiv:1504.00264.

\bibitem[L79]{L79} Lusztig, G. \textit{Some remarks on the supercuspidal representations of $p$-adic semisimple groups.} In {Automorphic forms, representations and L-functions (Proc.\ Sympos.\ Pure Math., Oregon State Univ., Corvallis, Ore., 1977), Part 1}, Proc.\ Sympos.\ Pure Math., XXXIII, pages 171-175. Amer.\ Math.\ Soc., Providence, R.I., 1979.

\bibitem[L04]{L04} Lusztig, G. \textit{Representations of reductive groups over finite rings.}

%\bibitem[MP94]{MP94} Moy, A. and Prasad, G. \textit{Unrefined minimal $K$-types for $p$-adic groups,} Inv. Math. \textbf{116} (1994), pp. 393-408.

%\bibitem[Y02]{Y02} Yu, J-K. \textit{Smooth models associated to concave functions in Bruhat--Tits theory.} 1992.
\end{thebibliography}
\end{document}